\title{Transseries: Composition,\\
Recursion, and Convergence}
\author{G. A. Edgar}
\date{\today}
\theoremstyle{plain}
\newtheorem{pr}{Proposition}
\newtheorem{thm}[pr]{Theorem}
\newtheorem{co}[pr]{Corollary}
\newtheorem{lem}[pr]{Lemma}
\theoremstyle{remark}
\newtheorem{re}[pr]{Remark}
\newtheorem{de}[pr]{Definition}
\newtheorem{no}[pr]{Notation}
\newtheorem{ex}[pr]{Example}
\newtheorem{qu}[pr]{Question}
\numberwithin{pr}{section}
\newcommand{\Def}[1]{\textbf{\itshape #1}} 
\newcommand{\ECadd}{Prop.~3.24}
\newcommand{\Ewellprod}{Prop.~3.27}
\newcommand{\ECmult}{Prop.~3.29}
\newcommand{\Easympt}{Def.~3.45}
\newcommand{\Emultcontin}{Prop.~3.48}
\newcommand{\Elogfreepower}{Prop.~3.71}
\newcommand{\Eheightwins}{Prop.~3.72}
\newcommand{\Ederivexist}{Prop.~3.76}
\newcommand{\EWKB}{Prop.~3.82}
\newcommand{\Ecompexist}{Prop.~3.95}
\newcommand{\EcomponN}{Prop.~3.98}
\newcommand{\Ecompcontin}{Prop.~3.99}
\newcommand{\Edominateprop}{Prop.~4.17}
\newcommand{\Eintegral}{Prop.~4.29}
\newcommand{\Wgeomconv}{Def.~3.15}
\newcommand{\Wderivconvergence}{Prop.~4.7}
\newcommand{\Wtsupp}{Def.~7.1}
\newcommand{\Winv}{Sec.~8}
\newcommand{\Wfuppermonoc}{Rem.~9.3}
\renewcommand{\phi}{\varphi}
\renewcommand{\epsilon}{\varepsilon}
\renewcommand{\emptyset}{\varnothing}
\newcommand{\takes}{\colon}
\newcommand{\fgt}{\succ}
\newcommand{\fst}{\prec}
\newcommand{\fe}{\asymp}
\newcommand{\fgteq}{\succcurlyeq}
\newcommand{\fsteq}{\preccurlyeq}
\newcommand{\SET}[2]{ \left\{\, {#1} : {#2} \,\right\} }
\newcommand{\R}{\mathbb R}
\newcommand{\N}{\mathbb N}
\newcommand{\Z}{\mathbb Z}
\newcommand{\G}{\mathfrak G}
\newcommand{\GRID}{\mathfrak J}
\newcommand{\WW}{\mathfrak W}
\renewcommand{\AA}{\mathfrak A}
\newcommand{\BB}{\mathfrak B}
\newcommand{\FC}{\mathfrak C}
\newcommand{\M}{\mathfrak M}
\newcommand{\T}{\mathbb T}
\newcommand{\RR}{\EuScript R}
\renewcommand{\SS}{\EuScript S}
\newcommand{\LP}{\EuScript P}
\renewcommand{\P}{\EuScript P}
\newcommand{\SA}{\EuScript A}
\newcommand{\SU}{\EuScript U}
\renewcommand{\o}{\mathrm o}
\renewcommand{\O}{\mathrm O}
\newcommand{\omu}{\o_\ebmu}
\newcommand{\Omu}{\O_\ebmu}
\newcommand{\muto}{\overset{\ebmu}{\longrightarrow}}
\newcommand{\ac}{\,\raisebox{0.4ex}{$\underset{{}^{{}^\mathrm C}}{\longrightarrow}$}\,}
\newcommand{\aw}{\,\raisebox{0.4ex}{$\underset{{}^{{}^\mathrm W}}{\longrightarrow}$}\,}
\newcommand{\ah}{\,\raisebox{0.4ex}{$\underset{{}^{{}^\mathrm H}}{\longrightarrow}$}\,}
\newcommand{\bm}{\mathbf m}
\newcommand{\fa}{\mathfrak a}
\newcommand{\fb}{\mathfrak b}
\newcommand{\g}{\mathfrak g}
\newcommand{\m}{\mathfrak m}
\newcommand{\n}{\mathfrak n}
\newcommand{\e}{\mathfrak e}
\renewcommand{\l}{\mathfrak l}
\newcommand{\A}{\mathbf A}
\newcommand{\B}{\mathbf B}
\newcommand{\CC}{\mathbf C}
\newcommand{\DD}{\mathbf D}
\newcommand{\Gsmall}{\G^{\mathrm{small}} }
\newcommand{\Msmall}{\M^{\mathrm{small}} }
\newcommand{\Glarge}{\G^{\mathrm{large}} }
\newcommand{\Wlarge}{\WW^{\mathrm{large}} }
\newcommand{\Wpure}{\WW^{\mathrm{pure}} }
\newcommand{\supp}{\operatorname{supp}}
\newcommand{\lsupp}{\operatorname{lsupp}}
\newcommand{\tsupp}{\operatorname{tsupp}}
\renewcommand{\mag}{\operatorname{mag}}
\newcommand{\dom}{\operatorname{dom}}
\newcommand{\expo}{\operatorname{expo}}
\newcommand{\bmu}{{\boldsymbol{\mu}}}
\newcommand{\ebmu}{{\boldsymbol{\mu}}}
\newcommand{\tbmu}{{\widetilde{\bmu}}}
\newcommand{\tebmu}{{\tilde{\ebmu}}}
\newcommand{\ba}{{\boldsymbol{\alpha}}}
\newlength{\uuu}
\newcommand{\lbb}{\begin{picture}(8,8)(-2,2)
	\put(0,0){\line(0,1){9}}
	\put(2,0){\line(0,1){9}}
	\put(0,0){\line(1,0){5}}
	\put(0,9){\line(1,0){5}}
	\end{picture}}
\newcommand{\rbb}{\begin{picture}(7,8)(0,2)
	\put(3,0){\line(0,1){9}}
	\put(5,0){\line(0,1){9}}
	\put(0,0){\line(1,0){5}}
	\put(0,9){\line(1,0){5}}
	\end{picture}}
\newcommand{\lbbb}{\begin{picture}(10,8)(-2,2)
	\put(0,0){\line(0,1){9}}
	\put(2,0){\line(0,1){9}}
	\put(4,0){\line(0,1){9}}
	\put(0,0){\line(1,0){7}}
	\put(0,9){\line(1,0){7}}
	\end{picture}}
\newcommand{\rbbb}{\begin{picture}(9,8)(0,2)
	\put(3,0){\line(0,1){9}}
	\put(5,0){\line(0,1){9}}
	\put(7,0){\line(0,1){9}}
	\put(0,0){\line(1,0){7}}
	\put(0,9){\line(1,0){7}}
	\end{picture}}
\begin{document} 
\maketitle
\setcounter{tocdepth}{4}

\abstract{Additional remarks and questions for transseries.
In particular: properties of composition for transseries;
the recursive nature of the construction of $\R\lbbb x \rbbb$;
modes of convergence for transseries.
There are, at this stage, questions and missing proofs
in the development.
}

\tableofcontents

\section{Introduction}
Most of the calculations done with transseries are easy, once
the basic framework is established.  But that may not be the
case for composition of transseres.  Here I will discuss
a few of the interesting features of composition.

The ordered differential field $\T = \R\lbbb x \rbbb = \R\lbb \G \rbb$ of
(real grid-based) transseries is completely explained in
my recent expository introduction \cite{edgar}.  A related
paper is \cite{edgarw}.  Other
sources for the definitions are:
\cite{asch}, \cite{costintop}, \cite{DMM}, \cite{hoeven},
\cite{kuhlmann}.
I will generally follow the notation from \cite{edgar}.  
Van der Hoeven \cite{hoeven}
sometimes calls $\T$ \Def{the transline}.

So $\T$ is the set of all grid-based real formal linear
combinations of monomials from $\G$, while $\G$
is the set of all $e^L$ for $L \in \T$ purely large.
(Because of logarithms, there is no need to write
separately two factors as $x^b e^L$.)

\begin{no}
For transseries $A$, we already use
exponents $A^n$ for multiplicative powers,
and parentheses $A^{(n)}$ for derivatives.  Therefore
let us use square brackets
$A^{[n]}$ for compositional powers.
In particular, we will write $A^{[-1]}$ for the compositional
inverse.  Thus, for example, $\exp_n = \exp^{[n]} = \log^{[-n]}$.

Write $\l_n$ for $\log_n$ if $n>0$; write $\l_0 = x$;
write $\l_n = \exp_{-n}$ if $n<0$.
\end{no}

Recall \cite[\ECadd\ \& \ECmult]{edgar}
two canonical decompositions for a transseries:

\begin{pr}[Canonical Additive Decomposition]\label{C_add}
Every $A \in \R\lbb\G\rbb$ may be written uniquely in the form
$A = L + c + V$, where $L$ is purely large,
$c$ is a constant, and $V$ is small.
\end{pr}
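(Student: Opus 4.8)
The plan is to split the support of $A$ according to how each monomial compares with the constant monomial $1$. Since $\G$ is a totally ordered group, every $\g \in \G$ satisfies exactly one of $\g \fgt 1$, $\g = 1$, $\g \fst 1$, so $\supp A$ is the disjoint union of $S_L = \SET{\g \in \supp A}{\g \fgt 1}$, $S_c = \supp A \cap \{1\}$, and $S_V = \SET{\g \in \supp A}{\g \fst 1}$. I would then let $L$, $c$, $V$ be the elements of $\R\lbb\G\rbb$ obtained by restricting the coefficient family of $A$ to these three pieces respectively (so $c = 0$ if $1 \notin \supp A$).

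The first thing to verify is that these are legitimate transseries: a subset of a grid-based set is grid-based, so $S_L$, $S_c$, $S_V$ are each the support of a well-defined element of $\R\lbb\G\rbb$. By construction $\supp L$ consists only of large monomials, so $L$ is purely large; $\supp c \subseteq \{1\}$, so $c$ is a constant; and $\supp V$ consists only of small monomials, so $V$ is small. Since addition in $\R\lbb\G\rbb$ is coefficientwise on the union of supports and the three supports partition $\supp A$, it follows that $A = L + c + V$.

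For uniqueness, I would suppose $A = L' + c' + V'$ with $L'$ purely large, $c'$ constant, $V'$ small, and rearrange to $L - L' = (c' - c) + (V' - V)$. The left-hand side is purely large (a difference of purely large transseries is again purely large, or $0$), while the right-hand side has support contained in $\{1\} \cup \SET{\g \in \G}{\g \fst 1}$ and so contains no large monomial; a purely large transseries with no large monomial in its support must be $0$, giving $L = L'$. Then $c - c'$ equals $V' - V$, which has support contained both in $\{1\}$ and in $\SET{\g \in \G}{\g \fst 1}$, hence is $0$; so $c = c'$ and $V = V'$.

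I do not expect a genuine obstacle here: the whole argument is bookkeeping on supports, using only that $\G$ is totally ordered by $\fgt$ and that grid-based sets are closed under subsets. The one point worth stating carefully is that \emph{purely large}, \emph{constant}, and \emph{small} are conditions on the support (so that the zero transseries vacuously satisfies all three); this is exactly what makes both the existence step and the cancellation in the uniqueness step go through without extra case distinctions.
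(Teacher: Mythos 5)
Your argument is correct and is exactly the standard support-splitting proof; this paper merely recalls the proposition from the author's expository reference (where it appears as Prop.~3.24) rather than reproving it, and the proof there proceeds the same way. Your remark that \emph{purely large}, \emph{constant}, and \emph{small} are support conditions (so $0$ satisfies all three) is the right point to make explicit for both the existence and the cancellation steps.
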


\begin{pr}[Canonical Multiplicative Decomposition]\label{C_mult}
Every nonzero transseries $A \in \R\lbb\G\rbb$ may be
written uniquely in the form
$A = a \cdot\g \cdot(1+U)$ where $a$ is nonzero real,
$\g \in \G$, and $U$ is small.
\end{pr}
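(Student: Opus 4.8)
The plan is to extract $\g$ and $a$ from the leading term of $A$ and then check that dividing them out leaves $1$ plus a small transseries. Since $A \neq 0$, its support $\supp A$ is a nonempty subset of $\G$, and, being grid-based, it has a $\fgt$-greatest element; set $\g := \dom A \in \G$ and let $a$ be its (nonzero, real) coefficient in $A$. The first thing to pin down is that multiplication by a fixed monomial $\n \in \G$ is a well-defined operation on $\R\lbb\G\rbb$: it sends a transseries with support $S$ to one with support $S\n$, and $S\n$ is grid-based whenever $S$ is, because a grid translated by a group element is again a grid (and $\m \mapsto \m\n$ is injective, so no coefficients get merged). Hence $B := (a\g)^{-1}A = a^{-1}\,\g^{-1}\,A$ is a bona fide transseries, with $\supp B = \SET{\m\g^{-1}}{\m \in \supp A}$, and the monomial $\m\g^{-1}$ carries coefficient $a^{-1}a_\m$, where $a_\m$ is the coefficient of $\m$ in $A$.

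Next I would read off the decomposition of $B$. The monomial $\g\g^{-1} = 1$ lies in $\supp B$ with coefficient $a^{-1}a = 1$, while every other element of $\supp B$ is of the form $\m\g^{-1}$ with $\m \fst \g$ (as $\g$ is $\fgt$-greatest in $\supp A$), hence $\m\g^{-1} \fst 1$. Thus $B = 1 + U$, where $U$ collects the remaining terms and is small, and $A = a\cdot\g\cdot(1+U)$. This settles existence.

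For uniqueness, suppose $A = a\g(1+U) = a'\g'(1+U')$ with $a,a' \in \R\setminus\{0\}$, $\g,\g' \in \G$, and $U,U'$ small. Since $U$ is small, $1 \notin \supp U$, so $\supp(1+U) = \{1\} \cup \supp U$ and hence $\dom(1+U) = 1$ with coefficient $1$. Multiplicativity of the dominant monomial --- $\dom(PQ) = \dom(P)\dom(Q)$ for nonzero $P,Q$, valid because the leading coefficients multiply without cancellation --- then gives $\dom A = \g = \g'$. Cancelling the monomial $\g$ (again using injectivity of multiplication by $\g^{-1}$) leaves $a(1+U) = a'(1+U')$; comparing the coefficient of the monomial $1$ on each side, and using $1 \notin \supp U \cup \supp U'$, forces $a = a'$, whence $1+U = 1+U'$ and $U = U'$.

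The calculations here are routine; the content sits entirely in the structural facts invoked --- that a nonempty grid-based subset of $\G$ has a $\fgt$-greatest element (so $\dom A$ makes sense), that grid-based sets are closed under translation by a monomial (so the division stays inside $\R\lbb\G\rbb$), and that $\dom$ is multiplicative --- all of which belong to the prior development (cf.\ \ECmult\ and the surrounding material in \cite{edgar}). Accordingly the only real obstacle is bookkeeping: being careful that dividing out the single monomial $a\g$ genuinely produces a grid-based support rather than escaping the transseries field.
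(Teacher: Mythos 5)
Your proof is correct, and it is the standard argument; note that this paper does not prove the proposition at all but merely recalls it from \cite[\ECmult]{edgar}, where essentially the same reasoning (peel off the dominant term, use that a nonempty grid-based set has a $\fgt$-greatest element and that magnitudes are multiplicative) is carried out. One notational quibble: in this paper's conventions $\dom A$ is the dominant \emph{term} $a\g$ and $\mag A$ is the dominant \emph{monomial} $\g$, so you should write $\g := \mag A$ rather than $\g := \dom A$.
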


\begin{no}
Little-$\o$ and big-$\O$.  For $A \ne 0$ we define sets,
$$
	\o(A) := \SET{T \in \T}{T \fst A},
	\qquad
	\O(A) := \SET{T \in \T}{T \fsteq A}.
$$
These are used especially when $A$ is a monomial, but
$\o(A) = \o(\mag A)$.
Conventionally, we write $T = U + \o(A)$ when we mean
$T \in U + \o(A)$ or $T-U \fst A$.
\end{no}

\begin{no}
For use with a finite ratio set $\bmu \subset \Gsmall$,
we define
$$
	\omu(A) := \SET{T \in \T}{T \fst^\ebmu A},
	\qquad
	\Omu(A) := \SET{T \in \T}{T \fsteq^\ebmu A}.
$$
This time monomials do not suffice: if $\bmu = \{x^{-1},e^{-x}\}$,
then $\omu(x^{-1}+e^{-x}) \ne \omu(x^{-1})$.
\end{no}

\begin{re}\label{relationship}
Note the simple relationship between $<$ and $\fst$:
Define $|T| = T$ if $T \ge 0$, $|T| = -T$ if $T < 0$.
Then
\begin{align*}
	U \fst V & \Longleftrightarrow
	|U| < k|V| \text{ for all $k \in \R, k>0$},
\\
	U \fsteq V & \Longleftrightarrow
	|U| < k|V| \text{ for some $k \in \R, k>0$},
\\
	U \fe V & \Longleftrightarrow
	\frac{1}{k} < \left|\frac{U}{V}\right| <
	k  \text{ for some $k \in \R, k>1$},
\\
	U \sim V & \Longleftrightarrow
	\frac{1}{k} < \frac{U}{V} <
	k  \text{ for all $k \in \R, k>1$}.
\end{align*}
\end{re}
\noindent
The reason we can do this is the following interesting property:
if $1/k < T < k$ for some $k \in \R$, $k>1$,
then there is $c \in \R$, $c>0$, with $T \sim c$.

\begin{re}
Worth noting:  If $0 < A \le B$, then $A \fsteq B$.
If $0 > A \ge B$, then $A \fsteq B$.  If $A>0, B>0, A \fst B$,
then $A < B$.  If $A<0, B<0, A \fst B$, then $A > B$.
\end{re}

\section{Well-Based Transseries}
Besides the grid-based transseries as found in \cite{edgar},
we may also refer to the well-based version
as found, for example in \cite{DMM} or \cite{kuhlmann}.

\begin{de}\label{beyond}
For an ordered abelian group $\M$, let $\R[[\M]]$
be the set of Hahn series with support which is well ordered
(according to the reverse of $\fgt$).  Begin with group
$\WW_{0} = \SET{x^a}{a\in \R}$
and field $\T_{0} = \R[[\WW_{0}]]$.
Assuming field $\T_{N} = \R[[\WW_{N}]]$ has been defined,
let
$$
	\WW_{N+1} = \SET{x^b e^L}{L \in \T_{N} \text{ is purely large}}
$$
and $\T_{N+1} = \R[[\WW_{N+1}]]$.  Then
$$
	\WW_{\bullet} = \bigcup_{N=0}^\infty \WW_{N},\qquad
	\T_\bullet = \bigcup_{N=0}^\infty \T_{N}.
$$
Now as before,
\begin{align*}
	\WW_{\bullet,M} &= \SET{\g\circ\log_M}{\g \in \WW_{\bullet}},\qquad
	\T_{\bullet,M} = \SET{T\circ \log_M}{T \in \T_{\bullet}},
	\\
	\WW_{\bullet,\bullet} &= \bigcup_{M=0}^\infty \WW_{\bullet,M},\qquad
	\T_{\bullet,\bullet} = \bigcup_{M=0}^\infty \T_{\bullet,M}.
\end{align*}
A difference from the grid-based case:
$\T_{\bullet,\bullet} \ne \R[[\WW_{\bullet,\bullet}]]$.
The domain of $\exp$ is $\T_{\bullet,\bullet}$ and not
all of $\R[[\WW_{\bullet,\bullet}]]$.

Then $\T = \T_{\bullet,\bullet}$ is what I will mean here by ``well based''
transseries.  This is the system found in \cite{DMM}, for example.
This system and others are explored in \cite{kuhlmann}.
\end{de}

We have used letter Fraktur G ($\G$) for ``grid'' and letter
Fraktur W ($\WW$) for ``well''.  Notation $\T$ is used
for both, perhaps that will be confusing?  It is intended
that what I say here can usually apply to either case.

Here is one of the results that the well-based theory depends on.
(It is required, for example, to show that $T^{-1}$
has well-ordered support.)
I am putting it here because of its tricky proof.
The result is attributed to Higman, with this proof due
to Nash-Williams.

\begin{pr}\label{womonoid}
Let $\M$ be a totally ordered abelian group.  Let
$\BB\subseteq \Msmall$ be a set of small elements.
Write $\BB^*$ for the monoid generated by $\BB$.
If $\BB$ is well ordered (for the reverse of $\fgt$),
then $\BB^*$ is also well ordered.
\end{pr}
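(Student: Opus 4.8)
The plan is to derive the proposition from Higman's lemma on finite words, and then to prove that lemma by Nash--Williams' minimal-bad-sequence method. Call a sequence $\beta_1,\beta_2,\dots$ in a subset of $\M$ \emph{bad} if $\beta_i\fst\beta_j$ whenever $i<j$; then ``$\BB^*$ is well ordered for the reverse of $\fgt$'' is precisely the assertion that $\BB^*$ contains no infinite bad sequence. First I would set up the bridge to words. Let $\BB^{\langle*\rangle}$ be the set of finite sequences $(b_1,\dots,b_k)$ of elements of $\BB$, ordered by Higman embedding: $(b_1,\dots,b_m)\preceq_H(c_1,\dots,c_n)$ iff there are indices $i_1<\cdots<i_m$ with $b_k\fgteq c_{i_k}$ for all $k$ --- matching the reverse of $\fgt$, so that ``smaller letter'' means ``$\fgteq$ letter''. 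The product map $\pi\takes\BB^{\langle*\rangle}\to\BB^*$, $\pi(b_1,\dots,b_k)=b_1\cdots b_k$, is order preserving into $\BB^*$ equipped with the reverse of $\fgt$: if $(b_k)\preceq_H(c_i)$ via $i_1<\cdots<i_m$, then translation invariance of the group order gives $\prod_k b_k\fgteq\prod_k c_{i_k}$, while (using that $\M$ is abelian) the product of the remaining factors $c_i$, $i\notin\{i_1,\dots,i_m\}$, is a possibly empty product of elements that are $\fst1$ and hence is $\fsteq1$; so $\pi(c)=\bigl(\prod_k c_{i_k}\bigr)\bigl(\prod_{i\notin\{i_1,\dots,i_m\}}c_i\bigr)\fsteq\prod_k c_{i_k}\fsteq\prod_k b_k=\pi(b)$. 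Consequently any bad sequence in $\BB^*$ lifts, by choosing a preimage of each term, to a sequence of words that is bad for $\preceq_H$ (no $i<j$ with $w_i\preceq_H w_j$): for such $i<j$ one would get $\pi(w_i)\fgteq\pi(w_j)$, contradicting $\beta_i\fst\beta_j$. It therefore suffices to show $\BB^{\langle*\rangle}$ has no infinite bad sequence, i.e. is well-quasi-ordered by $\preceq_H$, given that $\BB$ is well ordered.

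For Higman's lemma I would argue by contradiction. Suppose bad sequences of words exist, and build a \emph{minimal} one $(w_1,w_2,\dots)$ recursively: having chosen $w_1,\dots,w_{k-1}$ so that this finite sequence extends to a bad sequence, let $w_k$ be a word of least possible length for which $(w_1,\dots,w_k)$ still extends to a bad sequence (lengths lie in $\N$, so ``least'' is legitimate; the construction uses dependent choice). No $w_i$ is empty, since the empty word embeds into every word; let $a_i\in\BB$ be the first letter of $w_i$ and $w_i'$ the word obtained by deleting it. Using that $\BB$ is well ordered, pick $i_1<i_2<\cdots$ with $a_{i_1}\fgteq a_{i_2}\fgteq\cdots$ (the usual monotone-subsequence argument, using that $\BB$ has no infinite strictly $\fgt$-increasing sequence). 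Now consider $w_1,\dots,w_{i_1-1},w'_{i_1},w'_{i_2},w'_{i_3},\dots$. This sequence is still bad: pairs inside the initial segment inherit badness; for a pair $(w_p,w'_{i_q})$ with $p<i_1$, an embedding $w_p\preceq_H w'_{i_q}$ would give $w_p\preceq_H w_{i_q}$ (as $w'_{i_q}$ is a subword of $w_{i_q}$ and $p<i_1\le i_q$), which is impossible; and for $(w'_{i_s},w'_{i_t})$ with $s<t$, an embedding $w'_{i_s}\preceq_H w'_{i_t}$ together with $a_{i_s}\fgteq a_{i_t}$ would yield $w_{i_s}\preceq_H w_{i_t}$ with $i_s<i_t$, again impossible. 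But the new bad sequence agrees with $(w_i)$ through position $i_1-1$ and places the strictly shorter word $w'_{i_1}$ at position $i_1$, contradicting the minimal choice of $w_{i_1}$. Hence no bad sequence exists.

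The heart of the argument --- and the step I expect to be hardest to present cleanly --- is the construction of the minimal bad sequence: it is a well-founded recursion that minimizes the length of each new term among those keeping the sequence extendable, and it tacitly invokes dependent choice. The accompanying delicate point is the verification that deleting leading letters and interleaving the tails preserves badness; the case $(w'_{i_s},w'_{i_t})$ is exactly where the two hypotheses must be married, namely the monotonicity $a_{i_s}\fgteq a_{i_t}$ of the extracted leading letters and the badness of $(w_i)$ between positions $i_s$ and $i_t$. By comparison the reduction in the first paragraph is routine; its only real content is that the order on $\M$ is translation invariant and that finite products of small ($\fst1$) elements remain $\fsteq1$, which is where the group structure of $\M$ enters.
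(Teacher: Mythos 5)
Your proof is correct, but it is organized differently from the paper's. The paper runs the Nash--Williams minimal-bad-sequence argument \emph{directly inside the monoid} $\BB^*$: it defines the length of $\g\in\BB^*$ as the minimal number of factors from $\BB$, builds a length-minimal strictly increasing sequence $(\n_k)$, splits off a least factor $\n_k=\fb_k\m_k$, extracts a subsequence with $\fb_{k_1}\fgteq\fb_{k_2}\fgteq\cdots$, and derives the contradiction from the division inequalities $\m_{k_j}=\n_{k_j}/\fb_{k_j}\fst\n_{k_{j+1}}/\fb_{k_{j+1}}=\m_{k_{j+1}}$ together with $l(\m_{k_1})=l_{k_1}-1$. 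You instead pass through the free monoid of words: you first prove Higman's lemma for finite sequences over the well-ordered alphabet $\BB$ (by exactly the same minimal-bad-sequence engine, deleting first letters rather than least factors), and then transfer the conclusion to $\BB^*$ via the order-preserving product map, which is where the group structure (translation invariance and the fact that a product of elements $\fst 1$ is $\fsteq 1$) enters. Your route costs an extra layer of bookkeeping and an extra lemma (the projection step), but it buys the full strength of Higman's lemma as a standalone statement and works verbatim for a merely well-quasi-ordered $\BB$; the paper's direct version is shorter here precisely because the total order on $\M$ lets one compare monoid elements by division instead of by positional embeddings. Both arguments rely on the same two hinges you correctly identify: the legitimacy of the length-minimizing recursion (dependent choice) and the marriage of the monotone subsequence of extracted letters/factors with the badness of the original sequence.
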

\begin{proof}
Write $\BB_n$ for the set of all products of $n$ elements
of $\BB$.  Thus:  $\BB_0 = \{1\}$, $\BB_1 = \BB$,
$\BB^* = \bigcup_{n=0}^\infty \BB_n$.  If $\g \in \BB^*$,
define the \Def{length} of $\g$ as
$$
	l(\g) = \min\SET{n}{\g \in \BB_n} .
$$
Since $\M$ is totally ordered, these are equivalent:

(i) $\BB$ is well ordered (every nonempty subset has
a greatest element),

(ii) any infinite sequence in $\BB$ has a nonincreasing subsequence,

(iii) there is no infinite strictly increasing sequence in $\BB$.

\noindent
We assume $\BB$ is well ordered, so it has all three properties.
We claim $\BB^*$ is well ordered.

Suppose (for purposes of contradiction) that there is
an infinite strictly increasing sequence in $\BB^*$.
Among all infinite strictly increasing sequences in $\BB^*$,
let $l_1$ be the minimum length of the first term.  Choose
$\n_1$ that has length $l_1$ and is the first term of
an infinite strictly increasing sequence in $\BB^*$.
Recursively, suppose that finite sequence
$\n_1 \fst \n_2 \fst \cdots \fst \n_k$ has been
chosen so that it is the beginning of some infinite
strictly increasing sequence in $\BB^*$.  Among all infinite
strictly increasing sequences in $\BB^*$ beginning
with $\n_1,\cdots,\n_k$, let $l_{k+1}$ be the minimum
length of the $(k+1)$st term.  Choose $\n_{k+1}$
of length $l_{k+1}$ such that there is an infinite strictly
increasing sequence in $\BB^*$ beginning
$\n_1,\cdots,\n_k,\n_{k+1}$.  This completes a recursive definition
of an infinite strictly increasing sequence $(\n_k)$ in $\BB^*$.

Now because all elements of $\BB$ are small and this sequence
is strictly increasing, $\n_k \ne 1$.  For each $k$, choose
a way to write $\n_k$ as a product of $l_k$ elements of
$\BB$, then let $\fb_k \in \BB$ be least of the factors.
So $\n_k = \fb_k \m_k$.  Now $(\fb_k)$ is an infinite
sequence in $\BB$, so there is a subsequence $(\fb_{k_j})$
with $\fb_{k_1} \fgteq \fb_{k_2} \fgteq \cdots$.
So
$$
	\m_{k_j} = \frac{\n_{k_j}}{\fb_{k_j}} \fst
	\frac{\n_{k_{j+1}}}{\fb_{k_j}} \fsteq
	\frac{\n_{k_{j+1}}}{\fb_{k_{j+1}}} = \m_{k_{j+1}}
$$
and (if $k_1 > 1$)
$$
	\n_{k_1-1} \fst \n_{k_1} \fsteq
	\frac{\n_{k_1}}{\fb_{k_1}} = \m_{k_1} .
$$
So $\n_1 \fst n_2 \fst \cdots \fst n_{k_1-1} \fst \m_{k_1} \fst
\m_{k_2} \fst \m_{k_3} \fst \cdots$ is an infinite strictly
increasing sequence in $\BB^*$.  But it begins with
$\n_1,\cdots,\n_{k_1-1}$ and $l(\m_{k_1}) = l_{k_1}-1$,
contradicting the minimality of $l_{k_1}$.  This contradiction shows
that there is, in fact, no infinite strictly increasing
seuqence in $\BB^*$.  So $\BB^*$ is well ordered.
\end{proof}

\begin{no}
For $N \in \N$, $N \ge 1$,
write
$$
	\Wpure_N = \SET{e^L}{L \text{ purely large, }
	\supp L \subset \WW_{N-1}\setminus\WW_{N-2}} ,
$$
$\Wpure_0 = \WW_0$, $\WW_{-1} = \{1\}$.
\end{no}

Of course the sets $\Wpure_N$ are subgroups
of $\WW_\bullet$.  Any $\g \in \WW_N$
can be written uniquely as $\g = \fa\fb$ with
$\fa \in \WW_{N-1}$ and $\fb \in \Wpure_N$.
Group $\WW_N$ is the direct product of subgroups:
$$
	\WW_N = \Wpure_0\cdot\Wpure_1\cdots
	\Wpure_{N-1}\cdot\Wpure_N .
$$

A set $\AA \subset \WW_N$ is decomposed as
\begin{equation*}
	\AA = \SET{\fa \fb}{\fb \in \BB, \fa \in \AA_\fb},
	\tag{$*$}
\end{equation*}
where $\BB \subset \Wpure_N$,
and for each $\fb \in \BB$, the set $\AA_\fb \subset \WW_{N-1}$.
The ordering in $\AA$ is lexicographic:
$$
	\fa_1\fb_1 \fst \fa_2\fb_2
	\quad\Longleftrightarrow\quad
	\fb_1 \fst \fb_2 \text{ or } \{
	\fb_1 = \fb_1 \text{ and } \fa_1 \fst \fa_2 \}.
$$
So the set $\AA$ is well ordered if and only if
set $\BB$ and all sets $\AA_\fb$ are well ordered.

The lexicographic ordering is the ``height wins'' rule:

\begin{pr}\label{Wheightwins}
Let $N \in \N$, $N \ge 1$.  If $\g \in \WW_N\setminus\WW_{N-1}$
and $\supp T \subset \WW_{N-1}$, then:
$T \fst \g$ if $\g \fgt 1$ and
$T \fgt \g$ if $\g \fst 1$.
\end{pr}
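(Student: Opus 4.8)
The plan is to reduce the statement to a single-monomial comparison and then to one core fact about exponentials at a fixed level. First the routine reductions: the case $T=0$ is trivial, so assume $T\ne 0$; then $T\fst\g\iff\mag T\fst\g$ and $T\fgt\g\iff\mag T\fgt\g$, with $\mag T\in\supp T\subset\WW_{N-1}$, so it suffices to prove that for every monomial $\m\in\WW_{N-1}$ and every $\g\in\WW_N\setminus\WW_{N-1}$ one has $\m\fst\g$ if $\g\fgt 1$ and $\m\fgt\g$ if $\g\fst 1$. Using the unique factorization $\g=\fa\fb$ with $\fa\in\WW_{N-1}$ and $\fb=e^L\in\Wpure_N$ recalled just above, the hypothesis $\g\notin\WW_{N-1}$ forces $\fb\ne 1$, i.e.\ $L\ne 0$ is purely large with $\supp L\subset\WW_{N-1}\setminus\WW_{N-2}$. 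Since $\m\fst\g\iff\m\fa^{-1}\fst\fb$ and $\m\fa^{-1}$ ranges over all of $\WW_{N-1}$ as $\m$ does (and likewise for $\fgt$), everything reduces to a single claim.

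\emph{Core claim}: if $\fb=e^L\in\Wpure_N$ with $L>0$, then $\mathfrak q\fst\fb$ for every $\mathfrak q\in\WW_{N-1}$; if $L<0$, then $\fb\fst\mathfrak q$ for every $\mathfrak q\in\WW_{N-1}$. Granting this, the sign dictionary also falls out: if $\fb\fst 1$ then $\fb\fst\fa^{-1}$, hence $\g=\fa\fb\fst 1$, and symmetrically $\fb\fgt 1\Rightarrow\g\fgt 1$; since $\fb\ne 1$ is $\fst$-comparable with $1$, this yields $\g\fgt 1\iff L>0$ and $\g\fst 1\iff L<0$, which together with the reductions above finishes the proof modulo the Core claim.

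I would prove the Core claim by induction on $N$, carrying the monomial-by-monomial statement of the proposition as the inductive hypothesis. Passing to $\fb^{-1}=e^{-L}$ and $\mathfrak q^{-1}$ interchanges the two cases, so assume $L>0$; and it suffices to treat $\mathfrak q\fgt 1$, since then $\mathfrak q\fsteq 1\fst\fb$ as $e^L$ is large. Write $\mathfrak q=x^ce^K$ with $K\in\T_{N-2}$ purely large (for $N=1$ simply $\mathfrak q=x^c$, $K=0$), so $\mathfrak q/\fb=x^ce^{K-L}$ and $\log(\mathfrak q/\fb)=c\log x+(K-L)$; it is enough to show this transseries is purely large and negative, for then $\mathfrak q/\fb$ is small. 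The one substantive step: $\supp K\subset\WW_{N-2}$ whereas $\mag L\in\WW_{N-1}\setminus\WW_{N-2}$ with $\mag L\fgt 1$, so the level-$(N-1)$ case of the proposition gives $K\fst\mag L$ (trivially if $K=0$); hence $\mag(K-L)=\mag L$, whose coefficient in $K-L$ is $-(\text{leading coefficient of }L)<0$, making $K-L$ purely large and negative. Finally $\mag L$ is a monomial of $\WW_{N-1}$ with $\mag L\fgt 1$, and any such monomial out-grows $\log x$ (immediate for $N-1=0$, where $\mag L=x^d$ with $d>0$; for $N-1\ge 1$ such a monomial $x^de^{K'}$ has $K'$ purely large with positive leading coefficient, or $K'=0$ with $d>0$, and either way beats $\log x$, again by the induction). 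Therefore $\mag\bigl(c\log x+(K-L)\bigr)=\mag L$ with negative coefficient, so $c\log x+(K-L)$ is purely large and negative, completing the induction.

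The main obstacle is the Core claim, and within it the step ``$K\fst\mag L$'': this is precisely the level-$(N-1)$ instance of the proposition itself, so the argument must be organized as an induction on $N$ with the single-monomial formulation as hypothesis rather than attacked directly at level $N$. The only other delicate point is that ``$\g\fgt 1$'' has to be shown to force the \emph{top} exponential factor $\fb$ to satisfy $\fb\fgt 1$, and not merely $\fb\ne 1$; this is why I would establish the Core claim for both signs of $L$ first and only afterwards read off which sign of $L$ corresponds to $\g\fgt 1$ and which to $\g\fst 1$.
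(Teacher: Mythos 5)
Your proof is correct, and it supplies an argument where the paper offers none: Proposition~\ref{Wheightwins} is stated immediately after the assertion that the order on $\WW_N$ is lexicographic with respect to the factorization $\WW_N=\Wpure_0\cdots\Wpure_N$, so in the paper ``height wins'' is treated as essentially definitional. Your route instead starts from the other standard description of the order (a monomial $x^ce^M$ is small iff $M<0$, or $M=0$ and $c<0$) and proves by induction on $N$ that the top exponential factor dominates everything of lower height; this is in effect the verification that the two descriptions of the order agree, and it parallels the grid-based \cite[\Eheightwins]{edgar} that the paper cites elsewhere. The reductions (to monomials, then via the direct-product factorization $\g=\fa\fb$ to the single factor $\fb\in\Wpure_N$), the care taken to derive the equivalence $\g\fgt 1\iff L>0$ from the Core claim rather than assuming it, and the inductive appeal to the level-$(N-1)$ statement to get $K\fst\mag L$ are all sound. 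Two small points could be streamlined: the detour through $\log(\mathfrak q/\fb)=c\log x+(K-L)$ and the comparison $\mag L\fgt\log x$ are avoidable if you compare $x^ce^{K-L}$ with $1$ directly by the lexicographic rule (exponent first, then the power of $x$), since you have already shown $K-L$ to be purely large and negative; and the claim that a large monomial of $\WW_{N-1}$ out-grows $\log x$ is really the log-free power comparison (\cite[\Elogfreepower]{edgar} in the grid-based setting), which deserves an explicit citation rather than ``again by the induction''. Neither affects correctness.
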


\subsection*{Decomposition of Sets}
I include here a few more uses of the decomposition
($*$).  Skip to Section~\ref{RecStr} if you are
primarily interested in the grid-based version
of the theory.

Write $\m^\dagger = \m'/\m$ for the logarithmic derivative.
In particular, if $\m = e^L \in \Wpure_N$, $N \ge 2$, then
$\m^\dagger = L'$ is supported in $\Wlarge_{N-1}\setminus\WW_{N-2}$,
and if $\m = e^L \in \Wpure_1$, then $\m^\dagger = L'$ is
supported in $\WW_0$.

The existence of the derivative for transseries is stated
like this:  If $T = \sum_{\g \in \AA} c_\g \g$,
then $T' = \sum_{\g\in\AA} c_\g \g'$.
Let us consider it more carefully.

\begin{thm}\label{wellderiv}
Let $\AA \subseteq \WW_{N,M}$ be well ordered, and let
$T = \sum_{\g \in \AA} c_\g \g$ in $\T_{N,M}$
have support $\AA$.  Then
{\rm(i)}~the family $\SET{\supp(\g')}{\g \in \AA}$ is point-finite;
{\rm(ii)}~$\bigcup_{\g\in\AA}\supp(\g')$ is well ordered;
{\rm(iii)}~$\sum_{\g\in\AA} c_\g \g'$ exists in
$\T_{\bullet,\bullet}$.
\end{thm}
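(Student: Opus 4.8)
The plan is to reduce to the case $M=0$ and then to induct on $N$.

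\emph{Reduction.} Write $\AA=\AA_0\circ\log_M$ with $\AA_0\subseteq\WW_N$ and put $d_{\g_0}:=c_{\g_0\circ\log_M}$. Since $(\g_0\circ\log_M)'=(\g_0'\circ\log_M)\cdot(\log_M)'$, where $(\log_M)'=\prod_{k=0}^{M-1}\l_k^{-1}$ is a single fixed monomial of $\WW_{\bullet,\bullet}$, since $T\mapsto T\circ\log_M$ is an order isomorphism of $\WW_\bullet$ onto $\WW_{\bullet,M}$, and since composition with $\log_M$ distributes over the sums defining transseries, one has $\sum_{\g\in\AA}c_\g\g'=(\log_M)'\cdot\bigl(\sum_{\g_0\in\AA_0}d_{\g_0}\g_0'\bigr)\circ\log_M$ and $\supp\bigl((\g_0\circ\log_M)'\bigr)=\bigl(\supp(\g_0')\circ\log_M\bigr)\cdot(\log_M)'$. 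Point-finiteness, well-orderedness of a union, and membership in $\T_{\bullet,\bullet}$ are all preserved under an order isomorphism followed by multiplication by a fixed monomial, so the assertions for $\AA\subseteq\WW_{N,M}$ follow from those for $\AA_0\subseteq\WW_N$. Hence assume $M=0$ and argue by induction on $N$, carrying along the auxiliary statement that $\g'$ exists in $\T_{\bullet,\bullet}$ with $\supp(\g')\subseteq\WW_N$ for every monomial $\g\in\WW_N$; at level $N$ this follows from assertion (iii) at level $N-1$, since for $\g=x^be^L$ one has $\g'=(bx^{-1}+L')\g$ with $L'$ furnished by (iii) at level $N-1$ applied to $\supp L\subseteq\WW_{N-1}$. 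The base $N=0$ is immediate: $(x^a)'=ax^{a-1}$, so each $\supp(\g')$ is empty or a singleton, distinct $\g$ give distinct singletons, the union is carried onto a subset of $\AA$ by the order isomorphism $x^{a-1}\mapsto x^a$, and the sum is a reindexing.

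\emph{Inductive step.} Decompose $\AA$ as in $(*)$: $\AA=\SET{\fa\fb}{\fb\in\BB,\ \fa\in\AA_\fb}$ with $\BB\subseteq\Wpure_N$ and each $\AA_\fb\subseteq\WW_{N-1}$ well ordered. Writing $\fb=e^{L_\fb}$, the product rule gives $(\fa\fb)'=(\fa'+\fa L_\fb')\fb$; by the auxiliary statement together with assertions (ii) and (iii) at level $N-1$ (the latter applied to $\supp L_\fb\subseteq\WW_{N-1}$), both $\fa'$ and $L_\fb'$ lie in $\T_{N-1}$ with support in $\WW_{N-1}$, so
\[
	\supp\bigl((\fa\fb)'\bigr)\ \subseteq\ \bigl(\supp(\fa')\ \cup\ \fa\cdot\supp(L_\fb')\bigr)\cdot\fb\ \subseteq\ \WW_{N-1}\cdot\{\fb\}.
\]
For (i), fix a monomial $\p\in\WW_N$ and write it uniquely as $\p=\fc\m$ with $\fc\in\WW_{N-1}$ and $\m\in\Wpure_N$. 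By the containment and uniqueness of this decomposition, $\p\in\supp\bigl((\fa\fb)'\bigr)$ forces $\fb=\m$, and then forces $\fc\in\supp(\fa')$ or $\fc\in\fa\cdot\supp(L_\m')$. The first holds for only finitely many $\fa\in\AA_\m$, by assertion (i) at level $N-1$. The second says $\fa\in\AA_\m\cap\bigl(\fc\cdot\supp(L_\m')^{-1}\bigr)$; since $\supp(L_\m')$ is well ordered, its pointwise inverse is well ordered for the forward order $\fgt$, hence so is $\fc\cdot\supp(L_\m')^{-1}$, and a totally ordered set that admits no infinite monotone sequence in either direction is finite, so again only finitely many $\fa$ qualify. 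Thus $\p$ lies in only finitely many of the sets $\supp(\g')$, which is (i).

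\emph{Assertions (ii) and (iii).} Using the containment once more,
\[
	\bigcup_{\g\in\AA}\supp(\g')\ \subseteq\ \bigcup_{\fb\in\BB}C_\fb\cdot\fb,\qquad
	C_\fb:=\Bigl(\bigcup_{\fa\in\AA_\fb}\supp(\fa')\Bigr)\cup\bigl(\AA_\fb\cdot\supp(L_\fb')\bigr)\subseteq\WW_{N-1}.
\]
The first part of $C_\fb$ is well ordered by assertion (ii) at level $N-1$; the second is the product of two well-ordered subsets of the ordered abelian group $\WW_{N-1}$, hence well ordered (a standard fact; cf.\ \cite[\Ewellprod]{edgar}); so each $C_\fb$ is well ordered. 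Since $\BB$ is well ordered, the right-hand side above is well ordered by the well-ordering criterion for $(*)$ recalled above. This gives (ii). Finally (iii) is formal: by (i) every monomial of $\WW_N$ receives a well-defined coefficient in $\sum_{\g\in\AA}c_\g\g'$, namely a sum of finitely many contributions; by (ii) the monomials carrying a nonzero coefficient lie in a well-ordered subset of $\WW_N$; hence $\sum_{\g\in\AA}c_\g\g'\in\T_N\subseteq\T_{\bullet,\bullet}$.

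The crux, I expect, is the cross-term $\fa L_\fb'$ in assertion (i): controlling its contribution to a fixed target monomial is precisely what needs the ``well ordered $\cap$ reverse-well-ordered is finite'' observation, and one must verify that the lexicographic (``height wins'') bookkeeping genuinely pins the $\Wpure_N$-factor $\fb$ down to a single value, so that this finiteness is not diluted over infinitely many $\fb$. The remaining steps are the well-ordering criterion for $(*)$ and routine support arithmetic; the reduction to $M=0$ is likewise routine but worth stating.
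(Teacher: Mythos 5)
Your proposal is correct and follows essentially the same route as the paper: reduce to $M=0$ by composing with $\exp_M$ (the paper does this reduction last rather than first, which is immaterial), induct on $N$ using the decomposition $(*)$ into $\Wpure_N$-factors, and handle the cross term $\fa\fb^\dagger$ by the finiteness-of-representations property of products of well-ordered sets, with the lexicographic order giving (ii). Your ``well ordered intersected with reverse-well-ordered is finite'' phrasing is just a spelled-out version of the paper's appeal to both $\AA_\fb$ and $\supp\fb^\dagger$ being well ordered, so there is no substantive difference.
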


This is proved in stages.

\begin{pr}\label{deriv0}
Let $\AA \subseteq \WW_{0}$ be well ordered, and let
$T = \sum_{\g \in \AA} c_\g \g$ have support $\AA$.  Then
{\rm(i)}~the family $\SET{\supp(\g')}{\g \in \AA}$ is point-finite;
{\rm(ii)}~$\bigcup_{\g\in\AA}\supp(\g')$ is well ordered;
{\rm(iii)}~$\sum_{\g\in\AA} c_\g \g'$ exists in
$\T_0$.
\end{pr}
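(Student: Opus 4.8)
The plan is to exploit the fact that, at level $0$, the derivation acts one monomial at a time, so that the whole statement collapses to the trivial observation that translating a reverse‑well‑ordered subset of $\R$ keeps it reverse‑well‑ordered. Concretely: for $\g = x^a \in \WW_0$ we have $\g' = a\,x^{a-1}$, so $\supp\g' = \{x^{a-1}\}$ when $a \neq 0$ and $\supp\g' = \emptyset$ when $a = 0$. I would write $E = \SET{a \in \R}{x^a \in \AA}$ for the exponent set of $\AA$ and record that, since $x^a \fgt x^b \iff a > b$ on $\WW_0$, the hypothesis that $\AA$ is well ordered for the reverse of $\fgt$ says exactly that every nonempty subset of $E$ has a greatest element, equivalently that $E$ contains no infinite strictly increasing sequence.

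For part (i), I would note that the assignment $x^a \mapsto x^{a-1}$ is injective, so the nonempty sets $\supp\g'$ ($\g \in \AA$) are pairwise disjoint singletons; hence every monomial of $\WW_0$ lies in at most one of them, which is rather more than point‑finiteness. For part (ii), observe that $\bigcup_{\g\in\AA}\supp\g' = \SET{x^{a-1}}{a \in E,\ a \neq 0}$, whose exponent set is contained in the translate $E - 1$. Since $t \mapsto t-1$ is an order‑automorphism of $(\R,<)$, the set $E-1$ again has the property that every nonempty subset has a greatest element, and this passes to arbitrary subsets; so $\bigcup_{\g\in\AA}\supp\g'$ is well ordered for the reverse of $\fgt$.

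For part (iii), I would simply assemble these: by (i) the coefficient of each monomial in the formal sum $\sum_{\g\in\AA}c_\g\g'$ is a single term, so the sum is unambiguously defined, and by (ii) its support is well ordered for the reverse of $\fgt$; hence $\sum_{\g\in\AA}c_\g\g'$ is a genuine element of $\R[[\WW_0]] = \T_0$, namely $\sum_{a\in E,\,a\neq 0} a\,c_{x^a}\,x^{a-1}$. I do not expect any real obstacle here: the content of this proposition is only that at the bottom level differentiation never manufactures an infinite support, so everything reduces to translation‑invariance of reverse‑well‑ordering. The genuine difficulty is deferred to the inductive step toward Theorem~\ref{wellderiv} — once $N \ge 1$, $\g'$ can itself be an infinite series, and one must then control point‑finiteness and the union of the supports $\supp\g'$ across levels using the decomposition ($*$) and the ``height wins'' rule of Proposition~\ref{Wheightwins}.
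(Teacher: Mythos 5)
Your proposal is correct and is essentially the paper's own argument: the supports $\supp(\g')$ are disjoint singletons (or empty) since $(x^a)'=ax^{a-1}$, and the union is contained in $x^{-1}\AA$, a translate of a well ordered set, so (iii) follows. Your phrasing via the exponent set $E$ and the order-automorphism $t\mapsto t-1$ is just a notational variant of the same proof.
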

\begin{proof}
Since $(x^b)' = bx^{b-1}$, the family $\SET{\supp(\g')}{\g \in \AA}$
is disjoint.  Then
$$
	\bigcup_{\g\in\AA}\supp(\g') \subseteq x^{-1}\AA,
$$
so it is well ordered. (iii)~follows from (i) and (ii).
\end{proof}

\begin{pr}\label{derivN}
Let $\AA \subseteq \WW_{N}$ be well ordered, and let
$T = \sum_{\g \in \AA} c_\g \g$ have support $\AA$.  Then
{\rm(i)}~the family $\SET{\supp(\g')}{\g \in \AA}$ is point-finite;
{\rm(ii)}~$\bigcup_{\g\in\AA}\supp(\g')$ is well ordered;
{\rm(iii)}~$\sum_{\g\in\AA} c_\g \g'$ exists in
$\T_N$.
\end{pr}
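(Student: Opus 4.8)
The plan is to prove Proposition~\ref{derivN} by induction on $N$. The base case $N=0$ is Proposition~\ref{deriv0}, so suppose $N\ge 1$ and that the full statement holds with $N$ replaced by $N-1$ (which is Proposition~\ref{deriv0} itself when $N-1=0$). Decompose $\AA$ as in~$(*)$, so $\AA=\SET{\fa\fb}{\fb\in\BB,\ \fa\in\AA_\fb}$ with $\BB\subseteq\Wpure_N$ and each $\AA_\fb\subseteq\WW_{N-1}$ well ordered, and write $\fb=e^{L_\fb}$, so $L_\fb$ is purely large with $\supp L_\fb\subseteq\WW_{N-1}\setminus\WW_{N-2}$. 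Since $L_\fb\in\T_{N-1}$, the set $\supp L_\fb$ is a well-ordered subset of $\WW_{N-1}$, so the inductive hypothesis gives $L_\fb'\in\T_{N-1}$; applied to the singleton $\{\fa\}$ it also gives $\fa'\in\T_{N-1}$ for each $\fa\in\AA_\fb$. The computation at the heart of the argument is the identity
$$
	\g' = (\fa\fb)' = \fa'\fb + \fa\fb' = (\fa' + \fa L_\fb')\,\fb ,\qquad \g=\fa\fb ,
$$
in which $\fa'+\fa L_\fb'\in\T_{N-1}$ (by the previous sentence $\fa'\in\T_{N-1}$, and $\fa L_\fb'\in\T_{N-1}$ since $\fa\in\WW_{N-1}$ and $L_\fb'\in\T_{N-1}$), so that, since multiplication by the single monomial $\fb$ causes no cancellation,
$$
	\supp(\g')\ \subseteq\ \bigl(\supp(\fa')\ \cup\ \fa\cdot\supp(L_\fb')\bigr)\cdot\{\fb\}\ \subseteq\ \WW_{N-1}\cdot\{\fb\} .
$$
In particular every monomial of $\supp(\g')$ has $\Wpure_N$-component exactly $\fb$.

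For part~(i), fix $\m\in\WW_N$ and write $\m=\n\fb$ uniquely with $\n\in\WW_{N-1}$ and $\fb\in\Wpure_N$. By the support inclusion, if $\m\in\supp(\g')$ for some $\g=\fa\fc\in\AA$ then $\fc=\fb$ (so $\fb\in\BB$) and $\n\in\supp(\fa')\cup\fa\cdot\supp(L_\fb')$; hence $\SET{\g\in\AA}{\m\in\supp(\g')}$ is contained in
$$
	\Bigl(\SET{\fa\in\AA_\fb}{\n\in\supp(\fa')}\ \cup\ \SET{\fa\in\AA_\fb}{\n\in\fa\cdot\supp(L_\fb')}\Bigr)\cdot\{\fb\}.
$$
The first of these two sets is finite by the inductive hypothesis~(i) for $\AA_\fb$. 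For the second, each of its members $\fa$ yields a factorization $\n=\fa\p$ with $\p\in\supp(L_\fb')$, and a product of two well-ordered subsets of the totally ordered abelian group $\WW_{N-1}$ admits only finitely many factorizations of any given element (the standard fact underlying multiplication in $\R[[\WW_{N-1}]]$; cf.\ Proposition~\ref{womonoid}), so that set too is finite. Thus $\SET{\g\in\AA}{\m\in\supp(\g')}$ is finite.

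For part~(ii), the support inclusion above gives
$$
	\bigcup_{\g\in\AA}\supp(\g')\ \subseteq\ \bigcup_{\fb\in\BB}D_\fb\cdot\{\fb\},\qquad
	D_\fb := \Bigl(\bigcup_{\fa\in\AA_\fb}\supp(\fa')\Bigr)\ \cup\ \bigl(\AA_\fb\cdot\supp(L_\fb')\bigr)\ \subseteq\ \WW_{N-1}.
$$
For each $\fb$ the first part of $D_\fb$ is well ordered by the inductive hypothesis~(ii) for $\AA_\fb$, and the second is well ordered as a product of two well-ordered subsets of $\WW_{N-1}$; so $D_\fb$ is well ordered. But $\SET{\fc\fb}{\fb\in\BB,\ \fc\in D_\fb}$ is then a set decomposed as in~$(*)$ with well-ordered $\BB$-part and well-ordered fibres, hence is well ordered by the ``height wins'' lexicographic structure (Proposition~\ref{Wheightwins} and the remark following~$(*)$); so $\bigcup_{\g\in\AA}\supp(\g')$, being a subset of it, is well ordered. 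Finally part~(iii) is formal given (i) and (ii), exactly as in Proposition~\ref{deriv0}: by~(i) the coefficient of each monomial of $\sum_{\g\in\AA}c_\g\g'$ is a finite sum, and by~(ii) the monomials occurring lie in a well-ordered set, so $\sum_{\g\in\AA}c_\g\g'\in\R[[\WW_N]]=\T_N$.

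I expect the bookkeeping to be the only real obstacle: one must resist replacing $\bigcup_{\fa\in\AA_\fb}\supp(\fa')$ by $\supp\bigl(\sum_{\fa}c_{\fa\fb}\fa'\bigr)$, since cancellation in that sum can delete monomials, so the two contributions $\supp(\fa')$ and $\fa\cdot\supp(L_\fb')$ have to be carried separately all the way through. The one ingredient genuinely beyond Proposition~\ref{deriv0} and the $(*)$-machinery is the well-ordering (and finite-factorization) of the product $\AA_\fb\cdot\supp(L_\fb')$ of well-ordered subsets of $\WW_{N-1}$, for which the disjointness trick of Proposition~\ref{deriv0} is no longer available.
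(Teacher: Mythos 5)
Your proof is correct and follows essentially the same route as the paper's: induction on $N$, the decomposition $(*)$ into $\Wpure_N$-components, the identity $\g'=(\fa'+\fa\fb^\dagger)\fb$ (your $L_\fb'$ is exactly $\fb^\dagger$), the inductive hypothesis handling the $\supp(\fa')$ contribution and the well-ordered-product/finite-factorization fact handling the $\AA_\fb\cdot\supp(\fb^\dagger)$ contribution, and the lexicographic order for part~(ii). Your set $D_\fb$ is precisely the paper's $\FC_\fb$, so there is nothing further to reconcile.
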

\begin{proof}
This will be proved by induction on $N$.  The case $N=0$
is Proposition~\ref{deriv0}.  Now let $N\ge 1$ and assume the result
holds for smaller values.  Decompose $\AA$ as usual:
\begin{equation*}
	\AA = \SET{\fa \fb}{\fb \in \BB, \fa \in \AA_\fb},
	\tag{$*$}
\end{equation*}
where $\BB \subset \Wpure_N$ is well ordered,
and for each $\fb \in \BB$, the set $\AA_\fb \subset \WW_{N-1}$
is well ordered.  Now if $\g = \fa\fb \in \AA$,
$\fb \in \Wpure_N$, $\fa \in \WW_{N-1}$,
then $\g' = (\fa' + \fa\fb^\dagger)\fb$ and
$\supp(\fa' + \fa\fb^\dagger) \subset \G_{N-1}$.

(i) Let $\m \in \WW$ belong to some $\supp(\g')$.
It could be that $\m \in\supp(\fa')\fb$, $\fb \in \BB$,
$\fa \in \AA_\fb$; this happens for only one $\fb$
and only finitely many $\fa$ by the induction hypothesis.
Or it could be that $\m \in \supp(\fa\fb^\dagger)\fb$.  This happens
for only one $\fb$ and (since both $\AA_\fb$ and
$\supp\fb^\dagger$ are well ordered) only finitely many $\fa$.
So, in all, $\m \in \supp(\g')$ for only finitely many $\g \in \AA$.

(ii) For $\fb \in \BB$, let
$$
	\FC_\fb = \big(\AA_\fb\cdot\supp\fb^\dagger\big)
	\cup \bigcup_{\fa \in \AA_\fb} \supp(\fa') .
$$
So using the induction hypothesis and
\cite[\Ewellprod]{edgar}, we conclude that
$\FC_\fb \subset \WW_{N-1}$ is well ordered.  Therefore
$$
	\bigcup_{\g \in \AA} \supp(\g')
	\subseteq \SET{\fa\fb}{\fb \in \BB, \fa \in \FC_\fb}
$$
is also well ordered since it is ordered lexicographically.

(iii)~follows from (i) and (ii).
\end{proof}

\begin{proof}[Proof of Theorem~\ref{wellderiv}]
Recall the notation $\l_m = \log\circ\log\circ\cdots\circ\log$
with $m$ logarithms ($m>0$), $\l_0 = x$,
$\l_{-m} = \exp\circ\exp\circ\cdots\circ\exp$ with $m$
exponentials.  Note for $m \ge 1$,
$\l_m' = 1/(x\l_1\l_2\cdots\l_{m-1}) \in \WW_{m-1,m-1}$.

Define
$\AA_1 = \SET{\g\circ\l_{-M}}{\g \in \AA}$. Then
$\AA_1$ is well ordered and  $\AA_1 \subseteq \WW_{N}$.
Thus the previous result applies to $\AA_1$.
Now for $\g \in \AA$
we have $\g = \g_1\circ\l_M$, $\g_1 \in \AA_1$,
and $\g' = (\g_1'\circ\l_M)\cdot\l_M'$.
So $\supp(\g') = (\supp(\g_1')\circ\l_M)\cdot\l_M'$.  Both
correspondences (compose with $\l_M$ and
multiply by $\l_M'$) are bijective and order-preserving.
So the family
$\SET{\supp(\g')}{\g \in \AA}$ is point-finite since
$\SET{\supp(\g_1')}{\g_1 \in \AA_1}$ is point-finite;
$\bigcup_{\g \in \AA} \supp(\g')$ is well-ordered since
$\bigcup_{\g_1\in \AA_1} \supp(\g_1')$ is well-ordered.
And $\supp(\g') \subset \WW_{\max(N,M),M}$,
so $T' \in \T_{\bullet,\bullet}$.
\end{proof}

Now we consider a set closed under derivative in a
certain sense: a single well ordered set that supports
all derivatives of some $T$.

\begin{pr}\label{u0}
Let $\AA \subset \WW$ satisfy:
$\AA$ is log-free; $\AA$ is well ordered; $\m^\dagger \fsteq 1$
for all $\m \in \AA$.   Then there is $\widetilde\AA$ such that:
$\widetilde\AA \supseteq \AA$; $\widetilde\AA$ is log-free;
$\widetilde\AA$ is well ordered; $\m^\dagger \fsteq 1$
for all $\m \in \widetilde\AA$;
if $\m \in \widetilde\AA$ then $\supp(\m') \in \widetilde\AA$.
\end{pr}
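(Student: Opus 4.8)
The plan is to reduce the statement at once to level one, write down $\widetilde\AA$ explicitly, and then control its well-orderedness with Proposition~\ref{womonoid}.

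\emph{Reduction.} The first — and only substantive — step is to notice how restrictive $\m^\dagger\fsteq 1$ is: a log-free monomial with $\m^\dagger\fsteq 1$ must lie in $\WW_1$, and indeed must have the form $\m=x^a e^L$ with $L$ purely large, $\supp L\subset\WW_0$, and $\mag L\fsteq x$. I would prove this by writing $\m\in\WW_N\setminus\WW_{N-1}$ as $\m=\fa\fb$ with $\fa\in\WW_{N-1}$ and $\fb=e^{L_0}\in\Wpure_N$, $L_0\neq 0$ purely large. If $N\ge 2$, then $\fb^\dagger=L_0'$ is nonzero and supported in $\Wlarge_{N-1}\setminus\WW_{N-2}$ by the remark recalled just before Theorem~\ref{wellderiv}, so $\mag(L_0')\fgt 1$ has height $N-1$; since $\supp\fa^\dagger\subset\WW_{N-2}$, the ``height wins'' rule (Proposition~\ref{Wheightwins}) gives $\fa^\dagger\fst\mag(L_0')$, hence $\mag(\m^\dagger)=\mag(L_0')\fgt 1$, contradicting $\m^\dagger\fsteq 1$. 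So $N\le 1$, and for $N\le 1$ one has $\m^\dagger=ax^{-1}+L'$ with $\mag(L')$ the power $x^{b_0-1}$, $b_0$ the largest exponent occurring in $L$; thus $\m^\dagger\fsteq 1$ exactly when $b_0\le 1$. In particular $\AA\subset\WW_1$.

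\emph{Construction.} Now decompose $\AA$ by the lexicographic rule $(*)$ as $\AA=\SET{x^a e^L}{e^L\in\BB,\ x^a\in\AA_{e^L}}$, with $\BB\subset\Wpure_1$ well ordered and each $\AA_{e^L}=\SET{x^a}{a\in A_L}$ well ordered (so each $A_L\subset\R$ has no infinite strictly increasing sequence). Since differentiation never disturbs the factor $e^L$ — for $\m=x^a e^L$ we have $\m^\dagger=ax^{-1}+L'$ — the exponent $x^c$ of any monomial of $\supp(\m^\dagger)$ lies in $C_L:=\{-1\}\cup\SET{b-1}{x^b\in\supp L}\subset(-\infty,0]$, so $\supp(\m')=\supp(\m^\dagger)\cdot\{x^a e^L\}\subseteq\SET{x^{a+c}e^L}{c\in C_L}$. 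Letting $C_L^*$ be the additive monoid generated by $C_L$, I would put
\[
	\widetilde\AA:=\SET{x^a e^L}{e^L\in\BB,\ a\in A_L+C_L^*}.
\]
Then $\widetilde\AA\supseteq\AA$ (as $0\in C_L^*$), $\widetilde\AA\subset\WW_1$ is log-free, every $\m=x^a e^L\in\widetilde\AA$ has $\mag L\fsteq x$ (the $L$'s are those already present in $\AA$) and hence $\m^\dagger=ax^{-1}+L'\fsteq 1$, and $\supp(\m')\subseteq\SET{x^{a+c}e^L}{c\in C_L}\subseteq\widetilde\AA$ because $A_L+C_L^*+C_L\subseteq A_L+C_L^*$.

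\emph{Well-orderedness.} Decomposing $\widetilde\AA$ by $(*)$ gives the same $\BB$, with $\widetilde\AA_{e^L}=\SET{x^a}{a\in A_L+C_L^*}$, so it is enough to well-order each $\widetilde\AA_{e^L}$. The monomial set $\SET{x^c}{c\in C_L}$ is $\{x^{-1}\}$ together with $\SET{x^{b-1}}{x^b\in\supp L}$, which is order-isomorphic to the well-ordered set $\supp L$; hence $\SET{x^c}{c\in C_L,\ c<0}$ is a well-ordered set of small monomials, and by Proposition~\ref{womonoid} the monoid it generates, $\SET{x^s}{s\in C_L^*}$, is well ordered — i.e.\ $C_L^*$ has no infinite strictly increasing sequence. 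Since a (Minkowski) sum of two subsets of $\R$ with no infinite strictly increasing sequence again has none (as in \cite[\Ewellprod]{edgar} and the proof of Proposition~\ref{derivN}), $A_L+C_L^*$ has none, so $\widetilde\AA_{e^L}$, and therefore $\widetilde\AA$, is well ordered. The only place where real work is hidden is the reduction to $\WW_1$; everything after that is bookkeeping with the decomposition $(*)$ and Proposition~\ref{womonoid}.
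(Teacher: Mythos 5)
Your proposal is correct and is essentially the paper's own proof: reduce to $\WW_1$ by the ``height wins'' argument, decompose lexicographically over $\BB\subset\Wpure_1$, enlarge each fiber by the monoid generated by $\supp L'$ together with $x^{-1}$ (your additive $C_L^*$ is exactly the paper's $(\supp L')^*\cdot\{1,x^{-1},x^{-2},\dots\}$), and invoke Proposition~\ref{womonoid} plus the lexicographic order for well-orderedness. The only difference is presentational: you spell out the reduction to $\WW_1$ in more detail than the paper's one-line appeal to $(e^{x^2})^\dagger=2x\fgt 1$.
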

\begin{proof}
Let $\AA$ be log-free and well ordered
with $\m^\dagger \fsteq 1$ for all $\m \in \AA$.
Now $(e^{x^2})^\dagger = 2x \fgt 1$, so by ``height wins''
$\AA \subset \WW_1$.  We may decompose $\AA$ by factoring
each $\g \in \AA$ as $\g=x^be^L$, so that
$$
	\AA = \SET{x^be^L}{e^L \in \BB, x^b \in \AA_L},
$$
where $\BB$ is well ordered and, for each $e^L \in \BB$,
the set $\AA_L \subseteq \WW_0$ is well ordered;
the ordering is lexicographic:
$$
	x^{b_1}e^{L_1} \fst x^{b_2}e^{L_2}
	\quad\Longleftrightarrow\quad
	L_1 < L_2 \text{ or }
	\{\;L_1=L_2 \text{ and } b_1 < b_2\;\} .
$$
Now fix an $L$ with $e^L \in \BB$.  (Of course $L=0$
is allowed.) Then
$L' \fsteq 1$, so $\supp L'$ is a well ordered
set in $\WW_0$ with $\m \fsteq 1$ for all $\m \in \supp L'$.
The monoid $(\supp L')^*$ generated by $\supp L'$
is well-ordered.  So
$$
	\widetilde\AA_L :=
	(\supp L')^*\cdot\AA_L\cdot\{1,x^{-1},x^{-2},x^{-3},\cdots\}
$$
is well ordered. Define
$$
	\widetilde\AA := \SET{x^b e^L}{e^L \in \BB,
	x^b \in \widetilde{\AA}_L} .
$$
Because the ordering is lexicographic, $\widetilde\AA$
is also well ordered.
Note $\AA \subseteq \widetilde\AA \subset \WW_1$.  If
$x^b e^L \in \widetilde\AA$, then
$(x^b e^L)^\dagger \fsteq x^{-1}+L' \fsteq 1$.
Let $\m = x^b e^L \in \widetilde\AA$.  Then
$\m' = (b x^{b-1} +x^b L') e^L$.  But
$x^{b-1} \in \widetilde\AA_L$ and
$\supp(x^b L') \subseteq \widetilde\AA_L$.  Therefore
$\supp(\m') \subseteq \widetilde\AA$.
\end{proof}

Note: Let $\widetilde\AA \subseteq \WW$ with
$e^{x^2} \in \widetilde\AA$ and if $\m \in \widetilde\AA$
then $\supp(\m') \subseteq \widetilde\AA$.  Such
$\widetilde\AA$ cannot be well ordered, since
it contains $x^je^{x^2}$ for all $j \in \N$.
But there are at least the following two propositions.

\begin{pr}\label{u1}
Let $\e \in \WW_{N}\setminus \WW_{N-1}, \e \fst 1$.
Let $\AA \subset \WW_{N}$ be well ordered such that
$\m^\dagger \fsteq 1/(x\e)$ for all $\m \in \AA$.
Then there exists well ordered $\widetilde\AA \subset \WW_{N}$
such that $\widetilde\AA \supseteq \AA$ and
if $\g \in \widetilde\AA$, then
$\supp(x\e\g') \subseteq \widetilde\AA$.
\end{pr}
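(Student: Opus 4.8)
The plan is to imitate Propositions~\ref{u0} and~\ref{derivN}: decompose $\AA$ along the direct product $\WW_{N}=\WW_{N-1}\cdot\Wpure_{N}$ and build $\widetilde\AA$ of the same shape. I would first dispose of $N=0$ by hand: there $\e=x^{a}$ with $a<0$ (since $\e\fst1$), the map $\g\mapsto\supp(x\e\g')$ sends $x^{c}$ to $\{x^{c+a}\}$, so $\widetilde\AA:=\AA\cdot\SET{x^{ja}}{j\ge0}$ works (well ordered by \cite[\Ewellprod]{edgar} because $a<0$, visibly closed, and it contains $\AA$). For $N\ge1$, write $\e=\fa_\e\fb_\e$ with $\fa_\e\in\WW_{N-1}$ and $\fb_\e=e^{L_\e}\in\Wpure_N$; since $\e\notin\WW_{N-1}$ we have $\fb_\e\ne1$, and since $\e\fst1$ the ``height wins'' principle (Proposition~\ref{Wheightwins}, comparing $\fb_\e$ with $\fa_\e^{-1}\in\WW_{N-1}$: if $\fb_\e\fgt1$ then $\fa_\e^{-1}\fst\fb_\e$, i.e.\ $\e\fgt1$, absurd) forces $\fb_\e\fst1$. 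Then decompose $\AA=\SET{\fa\fb}{\fb\in\BB,\ \fa\in\AA_\fb}$ as in $(*)$, with $\BB\subset\Wpure_N$ and every $\AA_\fb\subset\WW_{N-1}$ well ordered.

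The observation that organizes everything: if $\g=\fa\fb$ with $\fa\in\WW_{N-1}$ and $\fb=e^{L}\in\Wpure_N$, then $\g'=(\fa^\dagger+L')\fa\fb$, so
\[
	x\e\,\g'\;=\;\bigl(x\fa_\e(\fa^\dagger+L')\fa\bigr)\cdot(\fb_\e\fb),
\]
where $x\fa_\e(\fa^\dagger+L')\fa$ is supported in $\WW_{N-1}$ and $\fb_\e\fb\in\Wpure_N$. Thus $\g\mapsto\supp(x\e\g')$ carries the ``column'' of monomials whose $\Wpure_N$-part is $\fb$ into the column whose $\Wpure_N$-part is $\fb_\e\fb$, and never back: the columns form a one-way chain under multiplication by $\fb_\e$, with no cycles, so it suffices to keep each individual column well ordered (the union of all of them over $k$ generally is not). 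Accordingly I would set $\widetilde\BB:=\BB\cdot\SET{\fb_\e^{k}}{k\ge0}$, well ordered in $\Wpure_N$ (a product of well-ordered sets, the second decreasing since $\fb_\e\fst1$), and for each $\fb\in\BB$ --- writing $L=\log\fb$ and $T:=\supp L'\cup\supp L_\e'$ --- define subsets $\AA_\fb^{(k)}\subseteq\WW_{N-1}$ recursively by $\AA_\fb^{(0)}:=\AA_\fb$ and
\[
	\AA_\fb^{(k+1)}\;:=\;x\fa_\e\cdot\biggl(\bigcup_{\fa\in\AA_\fb^{(k)}}\supp(\fa')\biggr)\;\cup\;x\fa_\e\cdot\AA_\fb^{(k)}\cdot T .
\]
An induction on $k$ shows each $\AA_\fb^{(k)}$ is well ordered: $\bigcup_{\fa}\supp(\fa')$ is well ordered by Proposition~\ref{derivN} at level $N-1$ (Proposition~\ref{deriv0} when $N=1$), and products and translates of well-ordered sets stay well ordered by \cite[\Ewellprod]{edgar}. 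Finally put $\widetilde\AA:=\bigcup_{\fb\in\BB}\bigcup_{k\ge0}\SET{\fa\,\fb\,\fb_\e^{k}}{\fa\in\AA_\fb^{(k)}}\subseteq\WW_N$.

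Then I would verify the three conclusions. Containment $\AA\subseteq\widetilde\AA$ is the case $k=0$. For well-orderedness, $\widetilde\AA=\SET{\fa\fb'}{\fb'\in\widetilde\BB,\ \fa\in\widetilde\AA_{\fb'}}$ with $\widetilde\AA_{\fb'}:=\bigcup\SET{\AA_\fb^{(k)}}{\fb\in\BB,\ k\ge0,\ \fb\fb_\e^{k}=\fb'}$; because $\BB$ is well ordered and $\fb_\e\fst1$, each $\fb'$ admits only finitely many such pairs $(\fb,k)$ (else $\BB$ would contain the infinite strictly increasing chain $\fb'\fb_\e^{-k_1}\fst\fb'\fb_\e^{-k_2}\fst\cdots$), so $\widetilde\AA_{\fb'}$ is a finite union of well-ordered sets, whence $\widetilde\AA$ --- decomposing lexicographically with well-ordered index set $\widetilde\BB$ and well-ordered fibers --- is well ordered. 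For closure, take $\g=\fa\,\fb\,\fb_\e^{k}\in\widetilde\AA$ (so $\fa\in\AA_\fb^{(k)}$); applying the displayed identity with $\fb$ replaced by $\fb\fb_\e^{k}$, whose log is $L+kL_\e$, gives $x\e\g'=\bigl(x\fa_\e(\fa^\dagger+L'+kL_\e')\fa\bigr)\cdot(\fb\,\fb_\e^{k+1})$, hence
\[
	\supp(x\e\g')\;=\;\supp\bigl(x\fa_\e(\fa^\dagger+L'+kL_\e')\fa\bigr)\cdot\fb\,\fb_\e^{k+1}\;\subseteq\;\AA_\fb^{(k+1)}\cdot\fb\,\fb_\e^{k+1}\;\subseteq\;\widetilde\AA ,
\]
the first inclusion because $\supp(x\fa_\e(\fa^\dagger+L'+kL_\e')\fa)\subseteq x\fa_\e\supp(\fa')\cup x\fa_\e\,\fa\,T$ (as $\supp(L'+kL_\e')\subseteq T$), and each of those two pieces lies in $\AA_\fb^{(k+1)}$ by construction since $\fa\in\AA_\fb^{(k)}$.

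The hard part is structural rather than computational: one must notice that the operation moves strictly forward along the $\fb_\e$-chain of columns --- so that it is enough to keep each column well ordered, not their union --- and must use well-orderedness of $\BB$ to bound the number of source columns feeding a given target column. I would also record that the hypothesis $\m^\dagger\fsteq1/(x\e)$ is never used, being in fact automatic: $1/(x\e)=\fb_\e^{-1}/(x\fa_\e)$ carries the nontrivial large $\Wpure_N$-factor $\fb_\e^{-1}$, so by ``height wins'' every transseries supported in $\WW_{N-1}$ --- in particular $\m^\dagger$ for $\m\in\WW_N$ --- is $\fst1/(x\e)$.
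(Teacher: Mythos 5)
Your proof is correct and follows essentially the same route as the paper's: the same factorization $\e=\e_0\e_1$ with $\e_1\in\Wpure_N$ small, the same enlarged index set $\widetilde\BB=\BB\cdot\{\e_1^k\}$, and the same finiteness argument (well-orderedness of $\BB$ plus $\e_1\fst1$ bounds the number of pairs $(\fb,k)$ over a given element of $\widetilde\BB$). Your explicit recursion $\AA_\fb^{(k)}$ is simply a cleaner rendering of the paper's ``continue many times'' step, and your closing observation that the hypothesis $\m^\dagger\fsteq1/(x\e)$ is automatic within $\WW_N$ (by Proposition~\ref{Wheightwins}) is also correct --- it is only needed for the extension to $\WW_\bullet$ in Proposition~\ref{u2}.
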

\begin{proof}
Write $\e = \e_0 \e_1$ with $\e_0 \in \WW_{N-1}$, $\e_1 \in \Wpure_N$,
$\e_1 \fst 1$.
Now for $\g = \fa\fb \in \AA$, we have
\begin{equation}\label{eq:derivative}
	x\e\g' = x\e_0\e_1(\fa'\fb + \fa\fb')
	= (x\e_0\fa' + x\e_0\fa\fb^\dagger)\cdot(\e_1\fb) .
\end{equation}
with $\e_1\fb \in \Wpure_N$ and support of
the first factor in $\WW_{N-1}$.  Applying this again:
$$
	(x\e\partial)^2\,\g = 
	\big[x\e_0(x\e_0\fa' + x\e_0\fa\fb^\dagger)'+
	x\e_0(x\e_0\fa' + x\e_0\fa\fb^\dagger)\fb^\dagger\big]\e_1\fb .
$$
Continue many times:  $(x\e\partial)^j \g = V\cdot \e_1^j\fb$,
$\supp V \subset \WW_{N-1}$, every term in $V$ has the following
form:  some $\fa \in \AA_\fb$, or some derivative, up to order $j$,
multiplied by factors chosen from $x,\e_1,\fb^\dagger,\e_1^\dagger$,
or derivatives of these, up to order $j$, each to a power at most $j$.
So there are finitely many well ordered sets involved.

Now let $\widetilde\BB = \BB\cdot\{1, \e_1, \e_1^2, \cdots\}$.
Thus $\BB \subseteq \widetilde\BB \subset \Wpure_N$
and $\widetilde\BB$ is well ordered.  Fix
$\m \in \widetilde\BB$.  Because $\BB$ is well-ordered
and $\e_1 \fst 1$, we have
$\m = \fb\e_1^j$ with $\fb \in \BB$ for only finitely many
different values of $j$.  For each such $j$ we get a well ordered
set in $\WW_{N-1}$.  Since there are finitely many $j$, in all
we get a well ordered set, call it $\widetilde\AA_\m$.
Our final result is
$$
	\widetilde\AA = \SET{\fa\m}{\m \in \widetilde\BB,
	\fa \in \widetilde\AA_\m},
$$
again with lexicographic order.  So $\widetilde\AA$ is well ordered.
From (\ref{eq:derivative}) we conclude:
if $\g \in \widetilde\AA$, then $\supp(x\e\g') \subseteq \widetilde\AA$.
\end{proof}

\begin{pr}\label{u2}
Let $\e \in \WW_{N}\setminus \WW_{N-1}, \e \fst 1$.
Let $\AA \subset \WW_{\bullet}$ be well ordered such that
$\m^\dagger \fsteq 1/(x\e)$ for all $\m \in \AA$.
Then there exists well ordered $\widetilde\AA \subset \WW_{\bullet}$
such that $\widetilde\AA \supseteq \AA$ and
if $\g \in \widetilde\AA$, then
$\supp(x\e\g') \subseteq \widetilde\AA$.
\end{pr}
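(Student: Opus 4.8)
The plan is to reduce to Proposition~\ref{u1}, first bounding the level of $\AA$ and then slicing off the top level.

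\emph{Step 1: $\AA\subseteq\WW_{N+1}$.} Write $\e=\e_0\e_1$ with $\e_0\in\WW_{N-1}$, $\e_1\in\Wpure_N$ (as in the proof of Proposition~\ref{u1}); then $1/(x\e)$ is a single monomial whose $\Wpure_N$-component is $\e_1^{-1}\fgt1$, so $1/(x\e)\in\WW_N\setminus\WW_{N-1}$. Let $\m\in\AA$; we may assume $\m\fgt1$ (replace $\m$ by $\m^{-1}$, noting $(\m^{-1})^\dagger=-\m^\dagger$; the only monomial $\fe1$ is $1\in\WW_0$). Suppose $\m\in\WW_M\setminus\WW_{M-1}$ with $M\ge2$, and write $\m=x^be^{L_0}$ with $L_0\in\T_{M-1}$ purely large (direct-product decomposition of $\WW_M$); since $\m\notin\WW_{M-1}$, $\mag L_0\in\WW_{M-1}\setminus\WW_{M-2}$ by height wins (Proposition~\ref{Wheightwins}). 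Hence $\mag(\m^\dagger)=\mag(L_0')=\mag\!\big((\mag L_0)^\dagger\big)\cdot\mag L_0$, whose first factor is supported in $\WW_{M-2}$ while $\mag L_0$ carries a nontrivial $\Wpure_{M-1}$-factor; so $\mag(\m^\dagger)\in\WW_{M-1}\setminus\WW_{M-2}$, and a short induction on $M$ (the same magnitude-of-derivative computation) shows $\mag(\m^\dagger)\fgt x^k$ for every $k$, in particular $\fgt1$. Therefore, if $M\ge N+2$, Proposition~\ref{Wheightwins} gives $\m^\dagger\fe\mag(\m^\dagger)\fgt T$ for every $T$ with $\supp T\subset\WW_{M-2}\supseteq\WW_N$, so $\m^\dagger\fgt1/(x\e)$, contradicting the hypothesis. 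Thus $\AA\subseteq\WW_{N+1}$.

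\emph{Step 2: slice off $\Wpure_{N+1}$.} Decompose $\AA$ as in~($*$), $\AA=\SET{\fa\fb}{\fb\in\BB,\ \fa\in\AA_\fb}$, with $\BB\subset\Wpure_{N+1}$ and each $\AA_\fb\subset\WW_N$ well ordered. For $\fb\in\Wpure_{N+1}$ one has $\fb=e^{\pm K}$ with $K$ purely large supported in $\WW_N\setminus\WW_{N-1}$, so $\fb^\dagger=\pm K'$ is supported in $\WW_N$, and
$$x\e(\fa\fb)'=\big(x\e\fa'+(x\e\fb^\dagger)\fa\big)\fb,$$
the first factor supported in $\WW_N$. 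So $x\e\partial$ preserves the $\Wpure_{N+1}$-component, and it suffices to find, for each $\fb\in\BB$, a well ordered $\widetilde\AA_\fb\subset\WW_N$ with $\AA_\fb\subseteq\widetilde\AA_\fb$ closed under $\fa\mapsto\supp(x\e\fa'+s_\fb\fa)$, where $s_\fb:=x\e\fb^\dagger$; then $\widetilde\AA:=\SET{\fa\fb}{\fb\in\BB,\ \fa\in\widetilde\AA_\fb}$ is well ordered (lexicographic order), contains $\AA$, and satisfies $\supp(x\e\g')\subseteq\widetilde\AA$ for $\g\in\widetilde\AA$.

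\emph{Step 3: the per-$\fb$ problem.} This is of exactly the type handled inside the proof of Proposition~\ref{u1}, one level up: there $(x\e\partial)^j(\fa\fb)=(x\e)^jV_j\fb$ with $V_j$ supported in $\WW_{N-1}$, and here the same computation gives $(x\e\partial)^j(\fa\fb)=(x\e)^jW_j\fb$ with $W_j$ supported in $\WW_N$—a polynomial of degree $O(j)$ in $\fa$, finitely many of its derivatives, $(x\e)^\dagger$, $\fb^\dagger$ and their derivatives, each to bounded power. Two facts drive the bookkeeping: $x\e\partial$ is magnitude-decreasing on $\WW_N$-monomials, since $x\e\m^\dagger\fst1$ there (the $\Wpure_N$-part $\e_1$ of $\e$ being $\fst$ everything supported in $\WW_{N-1}$, where $\m^\dagger$ lives); and $s_\fb=x\e\fb^\dagger\fsteq1$ with $\supp s_\fb$ a well ordered subset of $\{1\}\cup\Msmall$—this last being exactly where the hypothesis $\fb^\dagger\fsteq1/(x\e)$ is used. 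As in Proposition~\ref{u1}, even though $(x\e)^\dagger$ or $\fb^\dagger$ may be $\fgt1$ when $N$ is large, in $(x\e)^jW_j$ every large ingredient is escorted by a fresh factor of $x\e$, so the accumulating small factor prevails: for each $j$ only finitely many new monomials of a given magnitude occur, and larger $j$ gives strictly smaller ones. Hence $\bigcup_j\bigcup_{\fa\in\AA_\fb}\supp\big((x\e\partial+s_\fb)^j\fa\big)$, closed up under the displayed operator, is a well ordered subset of $\WW_N$; take it as $\widetilde\AA_\fb$ (invoking Proposition~\ref{womonoid} for the monoids generated by the small pieces and Theorem~\ref{wellderiv} for the derivatives of the bounded pieces).

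The main obstacle is this last well-orderedness claim—making rigorous that the accumulating factor $(x\e)^j$ absorbs the at-most-linear-in-$j$ growth of $(x\e)^\dagger$, $\fb^\dagger$ and their iterated derivatives. I expect it to require, and to be provable by, exactly the device behind the phrase ``there are finitely many well ordered sets involved'' in the proof of Proposition~\ref{u1}: tracking that every occurrence of a large factor carries a compensating factor of $x\e$.
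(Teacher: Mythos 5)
Your Steps 1 and 2 are correct, and Step 1 (that the hypothesis forces $\AA\subseteq\WW_{N+1}$) is a legitimate observation the paper does not make: the paper instead inducts on the least $n$ with $\AA\subset\WW_n$, taking Proposition~\ref{u1} as the base case, so it never needs to bound the level at all. Either framing reduces the problem to the same per-$\fb$ closure question, and Step 3 is exactly where your argument stops being a proof. You try to track $(x\e\partial+s_\fb)^j$ directly and you concede that the decisive well-orderedness claim --- that the accumulating factors of $x\e$ absorb the growth of the iterated derivatives of $(x\e)^\dagger$, $\fb^\dagger$, etc. --- is left unproved. That claim is not a routine replay of the bookkeeping inside Proposition~\ref{u1}; it is the entire content of the step, and the paper does not establish it by bookkeeping at all.

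The missing device is the following. Since $\supp(x\e\fb^\dagger)$ is a well ordered set of monomials $\fsteq 1$, Proposition~\ref{womonoid} (Higman) makes the monoid $\big(\supp(x\e\fb^\dagger)\big)^*$ it generates well ordered, hence $\AA_\fb\cdot\big(\supp(x\e\fb^\dagger)\big)^*$ is well ordered. This enlarged set is by construction stable under multiplication by $\supp(s_\fb)$, so on it the combined operator $\fa\mapsto\supp(x\e\fa'+s_\fb\fa)$ contributes nothing beyond the pure operator $\fa\mapsto\supp(x\e\fa')$; one then applies Proposition~\ref{u1} (in the paper, the induction hypothesis on the level) to this enlarged set to produce $\widetilde\AA_\fb$. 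All of the $j$-dependent estimates you were worried about disappear: the monoid closure handles every future multiplication by a factor of $\supp(x\e\fb^\dagger)$ in one stroke, and Proposition~\ref{u1} handles every future application of $x\e\partial$. If you insert this move, your Steps 1 and 2 yield a complete argument, modulo two small verifications that should be recorded (the paper is also silent on them): that the enlarged set still satisfies $\m^\dagger\fsteq 1/(x\e)$ so that Proposition~\ref{u1} applies to it, and that the resulting $\widetilde\AA_\fb$ remains stable under multiplication by $\supp(s_\fb)$, not merely under $x\e\partial$.
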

\begin{proof}
Let $n$ be minimum such that $\AA \subset \WW_n$.
If $n=N$, then this has been proved in Proposition~\ref{u2}.  In fact, if
$n < N$ the proof in Proposition~\ref{u2} still works with $\BB = \{1\}$.
We proceed by induction on $n$.  Assume $n>N$ and
the result is true for smaller $n$.  Decompose $\AA$
as usual:
$$
	\AA = \SET{\fa \fb}{\fb \in \BB, \fa \in \AA_\fb},
$$
where $\BB \subset \Wpure_n$ is well ordered,
and for each $\fb \in \BB$, the set $\AA_\fb \subset \WW_{n-1}$
is well ordered.  For $\g = \fa\fb \in \AA$,
\begin{equation}\label{eq:induc}
	x\e\g' = (x\e\fa'+x\e\fa\fb^\dagger)\fb .
\end{equation}
Now $\supp(x\e\fb^\dagger)$ is well ordered and $\fsteq 1$, so the
monoid $\big(\supp(x\e\fb^\dagger)\big)^*$ generated by
it is well ordered, so
$\AA_\fb\cdot\big(\supp(x\e\fb^\dagger)\big)^*$ is
well ordered.  By the induction hypothesis, there
exists well ordered $\widetilde\AA_\fb$ such that
$$
	\AA_\fb\cdot\big(\supp(x\e\fb^\dagger)\big)^*
	\subseteq \widetilde\AA_\fb \subset \WW_{n-1},
$$
and if $\m \in \widetilde\AA_\fb$
then $\supp(x\e\m')\subseteq\widetilde\AA_\fb$.
Then define
$$
	\widetilde\AA = \SET{\fa\fb}{\fb \in \BB,
	\fa \in \widetilde\AA_\fb},
$$
which is again well ordered.
From (\ref{eq:induc}) we conclude:
if $\g \in \widetilde\AA$, then $\supp(x\e\g') \subseteq \widetilde\AA$.
\end{proof}

\section{The Recursive Structure of the Transline}\label{RecStr}

\begin{pr}[{Inductive Principle}]\label{inductiveprinciple}
Let  $\RR \subseteq \T$.  Assume:
\begin{itemize}
\item[{\rm(a)}] $a \in \RR$ for all constants $a \in \R$.
\item[{\rm(b)}] $x \in \RR$.
\item[{\rm(c)}] If $A, B \in \RR$, then $A B \in \RR$.
\item[{\rm(d)}] If $A_i \in \RR$ for all $i$ in some index set, and
$A_i \to 0$, then $\sum A_i \in \RR$.
\item[{\rm(e)}] If $A \in \RR$, then $e^A \in \RR$.
\item[{\rm(f)}] If $A \in \RR$, then $A \circ \log \in \RR$.
\end{itemize}
Then $\RR = \T$.
\end{pr}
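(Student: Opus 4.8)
The plan is to unwind the recursive construction of $\T$ (Definition~\ref{beyond} in the well-based case, and \cite{edgar} in the grid-based case) and to check that the closure properties (a)--(f) force $\RR$ to absorb each layer in turn.  Concretely I would establish, in order: first that $\RR$ contains every monomial $x^a$ and hence all of $\T_0$; then that $\RR$ contains every log-free transseries, i.e.\ all of $\T_\bullet = \bigcup_N \T_N$; and finally that $\RR = \T$.  The roles of the hypotheses are clear in advance: (b) provides the seed $x$; (f) turns $x$ into $\log x$ and, at the end, turns log-free transseries into arbitrary ones; (c) and (e) build the monomials; (d) assembles a transseries from its monomial terms; (a) supplies the scalars.

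For the monomials: by (b), $x \in \RR$; by (f), $\log x = x\circ\log \in \RR$; by (a) and (c), $a\log x \in \RR$; by (e), $x^a = e^{a\log x} \in \RR$, for every $a \in \R$; and then $c\,x^a \in \RR$ for every $c \in \R$, again by (a) and (c).  A general $T \in \T_0$ is the sum of the summable family of its terms $c_\g\g$ ($\g \in \supp T \subseteq \WW_0$), all of which lie in $\RR$, so $T \in \RR$ by (d).  Next I would show $\T_\bullet \subseteq \RR$ by induction on $N$, the case $N=0$ being what was just shown.  For $N \ge 1$, assume $\T_{N-1}\subseteq\RR$.  Every monomial of $\WW_N$ has the form $x^b e^L$ with $L \in \T_{N-1}$ purely large; then $L \in \RR$ by the inductive hypothesis, $e^L\in\RR$ by (e), $x^b\in\RR$ by the $\T_0$ step, and $x^b e^L \in \RR$ by (c).  Hence every term $c_\g\g$ of a transseries $T = \sum_{\g\in\supp T}c_\g\g \in \T_N$ lies in $\RR$, and $T\in\RR$ by (d).  Finally, an arbitrary $T \in \T$ lies in some $\T_{\bullet,M}$, so $T = S\circ\l_M$ with $S \in \T_\bullet$; since $\l_M$ is the $M$-fold composite of $\log$, applying (f) to $S$ exactly $M$ times gives $S\circ\l_M = T \in \RR$, because $S \in \RR$ by the previous step.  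So $\RR = \T$.

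The step I would watch most carefully is the use of (d).  It must be read as: $\RR$ is closed under every family $(A_i)$ that is summable in $\T$ --- equivalently, under every infinite sum that actually exists there --- rather than merely ``closed under families tending to $0$ in the valuation topology.''  The weaker reading does not suffice: the terms $e^{-nx}$ of $\sum_n e^{-nx} = 1/(1-e^{-x})$ do not tend to $0$ valuation-wise (indeed $e^{-nx}\fgt e^{-x^2}$ for all $n$), yet that transseries must be reachable, and it cannot be built from (a)--(c), (e), (f) without the summing operation; so the relevant mode of convergence in (d) has to be the formal (grid-based) one.  With (d) so understood the argument goes through verbatim in both settings, and the finite sums used above (when $\supp T$ is finite) are just the case of a finite index set.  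One could instead route the first two steps through the canonical multiplicative decomposition of Proposition~\ref{C_mult}, writing $T = a\g(1+U)$ and reducing to the leading monomial $\g$ and the small factor $U = \sum c_\m\m$, but that detour adds nothing once (d) is available in the form just described.
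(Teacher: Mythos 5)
Your proof is correct and follows essentially the same route as the paper's (admittedly terser) argument: build $x^b$ from (b), (f), (a), (c), (e); assemble $\T_0$ and then each $\T_N$ via (d) and (e); and finish by composing with $\log_M$ via (f). Your cautionary reading of (d) as formal summability rather than convergence in the valuation topology is the intended one, so nothing is missing.
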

\begin{proof}
This principle is clear from the definition for $\T$ in \cite{edgar}
once we observe:\hfill\break
(i)~$x \circ \log = \log(x)$, so $\log(x) \in \RR$ by (b) and (f).
(ii)~If $b \in \R$, then $b \log(x) \in \RR$ by (a) and (c).
(iii)~$e^{b\log(x)} = x^b$, so $x^b \in \RR$ by (e).
(iv)~Once the terms of a purely large $L$ are known to be in $\RR$,
we get monomial $x^b e^L \in \RR$.
(v)~If $T = \sum c_j \g_j$ and monomials $g_j \in \RR$,
then $T \in \RR$.
(vi)~If $T \in \RR$, then $T \circ \log_M \in \RR$.
\end{proof}

In fact, the set of conditions can be reduced:
\begin{co}\label{inductivecor}
Let $\RR \subseteq \T = \R\lbb \G \rbb$, and identify $\G$
as a subset of $\T$ as usual.  Assume:
\begin{itemize}
\item[{\rm(d$'$)}] If $\supp A \subseteq \RR$, then $A \in \RR$.
\item[{\rm(e$'$)}] If $b \in \R$ and $L \in \RR$ is purely large and
log-free, then $x^b e^L \in \RR$.
\item[{\rm(f$\,'$)}] If $\g \in \RR$ is a monomial, then
$\g \circ \log \in \RR$.
\end{itemize}
Then $\RR = \T$.
\end{co}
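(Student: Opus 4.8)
The plan is to reprove $\RR = \T$ by mirroring the argument behind the Inductive Principle, but now checking that the three hypotheses (d$'$), (e$'$), (f$\,'$) are enough to walk through the recursive construction of $\T = \T_{\bullet,\bullet}$ from \cite{edgar}. Rather than deriving each of conditions (a)--(f) of Proposition~\ref{inductiveprinciple} from (d$'$)--(f$\,'$) -- which is awkward, since closure under multiplication (c) is not readily visible -- I would go back to the definition directly and imitate steps (i)--(vi) of the proof of Proposition~\ref{inductiveprinciple}, with (d$'$)--(f$\,'$) playing the roles formerly played by (a)--(f).

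First I would record the work done by (d$'$). Taking $\supp A = \emptyset$ gives $0 \in \RR$; and in general (d$'$) says a transseries lies in $\RR$ as soon as each of its monomials does. So (d$'$) simultaneously absorbs (a), the convergence clause (d), and step (v) of the earlier proof, and the entire task reduces to showing that every monomial of $\G = \G_{\bullet,\bullet}$ lies in $\RR$; once that is known, (d$'$) supplies every $T \in \T$ with no separate convergence argument (which is precisely why the corollary is phrased with (d$'$) rather than (d)).

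Next I would run an induction on the level $N$ to show $\G_N \subseteq \RR$ and $\T_N \subseteq \RR$ for all $N$. For $N = 0$: each $x^b$ equals $x^b e^0$, and $0$ is purely large and log-free, so (e$'$) gives $x^b \in \RR$; then $\T_0 \subseteq \RR$ by (d$'$). For the inductive step, any $\g \in \G_{N+1}$ has the form $x^b e^L$ with $L \in \T_N$ purely large; by the inductive hypothesis $L \in \RR$, and $L$ is log-free because $\T_N$ consists of log-free transseries, so (e$'$) puts $\g \in \RR$, whence $\T_{N+1} \subseteq \RR$ by (d$'$). A second induction, on the number $M$ of logarithms, then finishes: assuming $\G_{\bullet,M} \subseteq \RR$, any element of $\G_{\bullet,M+1}$ can be written as $(\g \circ \log_M) \circ \log$ with $\g \in \G_\bullet$, and $\g \circ \log_M$ is a monomial lying in $\G_{\bullet,M} \subseteq \RR$, so (f$\,'$) places it in $\RR$; then $\T_{\bullet,M+1} \subseteq \RR$ by (d$'$). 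Since $\T = \bigcup_M \T_{\bullet,M}$, this gives $\RR = \T$.

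The one point that genuinely needs watching -- the step I would treat carefully -- is the applicability of (e$'$) at each use: it demands that $L$ be at once purely large, log-free, \emph{and} already known to be in $\RR$. Log-freeness is exactly why the induction is organized as ``$\G_\bullet/\T_\bullet$ first (everything there is log-free), logarithmic shifts afterwards''; and the base case quietly relies on $0$ counting as purely large and log-free, so that (e$'$) with $L = 0$ delivers the powers $x^b$. Everything else is routine bookkeeping already licensed by (d$'$) and by the recursive definition of $\T$.
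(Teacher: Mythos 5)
Your proof is correct and follows the same route the paper intends: its own proof consists of noting $0\in\RR$ by (d$'$), hence $1,x\in\RR$ by (e$'$), and then saying ``follow the construction in \cite{edgar}'' --- which is precisely the double induction (first on height $N$ through $\G_N,\T_N$ using (e$'$) and (d$'$), then on depth $M$ using (f$\,'$) and (d$'$)) that you have written out explicitly.
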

\begin{proof}
Since $\supp 0 = \emptyset$,
we get $0 \in \RR$ by (d$'$); but $0$ is purely large
and log-free, so $1, x \in \RR$ by (e$'$).  Follow the
construction in \cite{edgar}.
\end{proof}

Another inductive form (see \cite{hoeven}):

\begin{co}\label{inductivelog}
Let $\RR \subseteq \T = \R\lbb \G \rbb$, and identify $\G$
as a subset of $\T$ as usual.  Assume:
\begin{itemize}
\item[{\rm(b$''$)}] For all $n \in \N$, $\l_n \in \RR$.
\item[{\rm(d$''$)}] If $\supp A \subseteq \RR$, then $A \in \RR$.
\item[{\rm(e$''$)}] If $L \in \RR$ is purely large, then $e^L \in \RR$.
\end{itemize}
Then $\RR = \T$.
\end{co}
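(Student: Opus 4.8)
The plan is to use (d$''$) to reduce the statement to a claim about monomials, and then to run an induction along the recursive construction of $\R\lbbb x\rbbb$ from \cite{edgar}. First I would observe that it is enough to prove $\G\subseteq\RR$: since $\T=\R\lbb\G\rbb$, every $A\in\T$ has $\supp A\subseteq\G\subseteq\RR$, hence $A\in\RR$ by (d$''$), so $\RR=\T$. Two preliminary remarks get the induction going: applying (d$''$) to $\supp 0=\emptyset$ gives $0\in\RR$, and since $0$ is vacuously purely large, (e$''$) gives $1=e^{0}\in\RR$; also each $\l_{n}$ ($n\in\N$) is a monomial lying in $\RR$ by (b$''$).

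Write $\G_{N,M}=\SET{\g\circ\log_{M}}{\g\in\G_{N}}$ for the groups of monomials appearing in the construction, so that $\G=\bigcup_{N,M}\G_{N,M}$, $\T_{N,M}=\R\lbb\G_{N,M}\rbb$, and $\G_{N+1}=\SET{x^{b}e^{L}}{b\in\R,\ L\in\T_{N}\text{ purely large}}$. I would prove $\G_{N,M}\subseteq\RR$ for all $M$ by induction on $N$. For $N=0$, a monomial of $\G_{0,M}$ has the form $(\log_{M}x)^{a}=e^{a\l_{M+1}}$ with $a\in\R$; as $\l_{M+1}$ is a monomial and $\l_{M+1}\in\RR$ by (b$''$), the purely large transseries $a\l_{M+1}$ has $\supp(a\l_{M+1})\subseteq\{\l_{M+1}\}\subseteq\RR$, so $a\l_{M+1}\in\RR$ by (d$''$), and then $e^{a\l_{M+1}}\in\RR$ by (e$''$).

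For the inductive step assume $\G_{N,M}\subseteq\RR$ for all $M$ and let $\m\in\G_{N+1,M}$, say $\m=(x^{b}e^{L})\circ\log_{M}=(\log_{M}x)^{b}\,e^{L\circ\log_{M}}$ with $b\in\R$ and $L\in\T_{N}$ purely large. Since $\m$ is a monomial of $\G$, we may write $\m=e^{L''}$ with $L''=\log\m$ purely large, and
\[
	L''\;=\;b\log(\log_{M}x)+L\circ\log_{M}\;=\;b\,\l_{M+1}+L\circ\log_{M}.
\]
Now $L\circ\log_{M}\in\T_{N,M}=\R\lbb\G_{N,M}\rbb$ gives $\supp(L\circ\log_{M})\subseteq\G_{N,M}$, and $\l_{M+1}$ is a monomial, so $\supp L''\subseteq\{\l_{M+1}\}\cup\G_{N,M}\subseteq\RR$ by (b$''$) and the induction hypothesis; hence $L''\in\RR$ by (d$''$), and therefore $\m=e^{L''}\in\RR$ by (e$''$). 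This closes the induction, so $\G=\bigcup_{N,M}\G_{N,M}\subseteq\RR$ and $\RR=\T$.

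There is no real obstacle here; the only point needing care is the bookkeeping with logarithm shifts in the inductive step --- in particular, recognizing that an arbitrary monomial of $\G_{N+1,M}$ is the exponential of a purely large transseries all of whose monomials are, by the induction hypothesis and (b$''$), already in $\RR$, and that the ``constant power'' factor $(\log_{M}x)^{b}$ must be folded into the exponent as $e^{b\l_{M+1}}$ before (e$''$) applies. This is exactly why (b$''$) must furnish the whole family $\l_{n}$ and not merely $x=\l_{0}$: the only constructive hypothesis that creates new monomials is (e$''$), which moves only \emph{up} the exponential hierarchy, so without the logarithms $\l_{1},\l_{2},\dots$ supplied at the outset one could never reach, say, the monomials $x^{b}$, let alone $(\log_{M}x)^{b}$.
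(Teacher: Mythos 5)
Your proof is correct, and it reaches the conclusion by a somewhat different route than the paper. The paper first runs an induction on height to get all \emph{log-free} transseries ($\T_\bullet\subseteq\RR$), and then handles logarithmic depth by a closure trick: it observes that $\widetilde\RR:=\SET{T\in\T}{T\circ\log\in\RR}$ again satisfies (b$''$), (d$''$), (e$''$) --- this is where the full family $\l_n$ is consumed, since $\l_n\circ\log=\l_{n+1}$ --- and so concludes $\T_{\bullet,M}\subseteq\RR$ by a second induction on $M$. You instead run a single induction on the height $N$, uniformly in the depth $M$, and do the depth bookkeeping by hand: you write an arbitrary $\m\in\G_{N+1,M}$ as $e^{L''}$ with $L''=b\,\l_{M+1}+L\circ\log_M$, check that $\supp L''\subseteq\{\l_{M+1}\}\cup\G_{N,M}\subseteq\RR$, and apply (d$''$) then (e$''$). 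Both arguments are sound and of comparable length; the paper's $\widetilde\RR$ device packages the depth induction more slickly and avoids explicit manipulation of exponents, while your version makes visible exactly where each $\l_{M+1}$ enters (namely in absorbing the factor $(\log_M x)^b$ into the exponent), which is also the content of your closing remark on why (b$''$) must supply all the iterated logarithms. One small point worth stating explicitly in your inductive step is that $L''$ is indeed purely large --- it is a sum of the purely large $b\,\l_{M+1}$ (or $0$) and $L\circ\log_M$, whose support consists of large monomials because $L$ is purely large --- since (e$''$) only applies to purely large elements of $\RR$.
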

\begin{proof}
First, $\log x \in \RR$ by (b$''$).  For any $b \in \R$,
$b \log x$ is purely large, so $e^{b \log x} = x^b \in \RR$
by (e$''$).  Next,
$\T_0 \subseteq \RR$ and
$b \log x + L \in \RR$ for any purely large $L \in \T_0$
by (d$''$), so $e^{b\log x + L} = x^b e^L \in \RR$.
Thus $\G_1 \subseteq \RR$ so $\T_1 \subseteq \RR$.  Continuing inductively,
$\G_n, \T_n \subseteq \RR$ for all $n \in \N$.
So $\T_\bullet \subseteq \RR$.

Note that $\widetilde\RR := \SET{T \in \T}{T \circ \log \in \RR}$
also satisfies the three conditions, so by the preceding
paragraph $\T_\bullet \subseteq \widetilde\RR$,
and $\T_{\bullet1} \subseteq \RR$.  Continuing inductively,
$\T_{\bullet m} \subseteq \RR$ for all $m \in \N$.
So $\T_{\bullet\bullet} \subseteq \RR$ and $\RR = \T$.
\end{proof}

\begin{qu}
Is there a good recursive formulation for $\LP$ or $\SS$?
See~\ref{no:LPSS}.
\end{qu}

\subsection*{The Schmeling Tree of a Transmonomial}
Let $\g$ be a transmonomial, $\g\in\G$.  Then
$\g = e^L$, where $L \in \T$ is purely large.
So $L = c_0 \g_0 + c_1\g_1 + \cdots$ where $c_i \in \R$
and $\g_i \in \Glarge$.  We may index this as
$L = \sum_i c_i \g_i$, where $i$ runs over some ordinal
(an ordinal $<\omega^\omega$ for the grid-based case;
just countable for the well-based case; possibly finite;
possibly just a single term; or even
no terms at all if $\g=1$).

In turn,
each $\g_i = e^{L_i}$, where $L_i \in \T$ is purely large
and positive.  So
$L_i = \sum_j c_{ij} \g_{ij}$, where index $j$ runs over
some ordinal (possibly a different ordinal for different $i$).
Continuing, each $\g_{ij} = e^{L_{ij}}$, where
$L_{ij} \in \T$, and $L_{ij} = \sum_k c_{ijk}\g_{ijk}$
where $\g_{ijk} \in \G$.
And so on:
each $\g_{i_1i_2\dots i_s}$ is in $\Glarge$, and has
the form $\g_{i_1 i_2\cdots i_s} = e^{L_{i_1 i_2\cdots i_s}}$,
and $L_{i_1 i_2\cdots i_s} =
\sum_{j} c_{i_1 i_2\cdots i_s j} \g_{i_1 i_2\cdots i_s j}$.

Say the original monomial $\g$ has height $N$; that is,
in the terminology of \cite{edgar},
$\g \in \G_{N,\bullet}$.  Then
eventually (with $s \le N$) we reach
$\g_{i_1 i_2\cdots i_s} = (\l_m)^b$ for some $m$, and
if $b \ne 1$, then in one more step we get
$\g_{i_1 i_2\cdots i_{s+1}} = \l_{m+1}$.
Let us stop a ``branch'' $i_1, i_2, \cdots$ when we
reach some $\l_m$ (even if $m \le 0$ so that we
have $x$ or $\exp_n x$).

The structure of the monomial $\g$ then corresponds to a
\emph{Schmeling tree}.  (We have adapted this tree discription
from Schmeling's thesis \cite{schmeling}.)
Each node corresponds to some monomial.
The root corresponds to $\g$.  The children of $\g$ are
the $\g_i$. A leaf corresponds to some
$\log_m x$, and is labeled by the integer $m$.  Each node
that is not a leaf has countably many children, arranged
in an ordinal, and each edge is labeled by a real number.
All nodes $\g_{i_1 i_2\cdots i_s}$ in the tree
(except possibly the root $\g$) are large monomials.

\begin{ex}
Consider the following example.
The ordinals here are all finite, so that
everything can be written down.
\begin{align*}
	\g &= e^{\displaystyle
	-e^{\displaystyle 4e^{\displaystyle 2 x^4-x}
	-(2/3)e^{\displaystyle x}}+3e^{\displaystyle \pi
	e^{\displaystyle x^4-2x^2}+\log x}
	}
\\ &=\exp\Bigg(
	-\exp\bigg(4\exp\Big(2 x^4-x\Big)
	-(2/3)\exp x\bigg)
	\\ &\qquad\qquad\qquad +3\exp\bigg(\pi
	\exp\Big( x^4-2x^2\Big)+\log x\bigg)
	\Bigg) .
\end{align*}
The component parts of the tree:
\begin{align*}
	\g_0 &= 
	e^{\displaystyle 4e^{\displaystyle 2 x^4-x}
	-(2/3)e^{\displaystyle x}},
	c_0 = -1,\quad
	\g_1 = e^{\displaystyle \pi
	e^{\displaystyle x^4-2x^2}+\log x},
	c_1 = 3,
\\
	\g_{00} &= e^{\displaystyle 2 x^4-x},
	c_{00} = 4,\quad
	\g_{01} = e^x = \log_{-1} x,
	c_{01} = -2/3,
\\
	\g_{10} &= e^{\displaystyle x^4-2x^2}, c_{10} = \pi,\quad
	\g_{11} = \log x = \log_1 x, c_{11} = 1,
\\
	\g_{000} &= x^4 = e^{4\log x}, c_{000} = 2,\quad
	\g_{001} = x = \log_0 x, c_{001} = -1 ,
\\
	\g_{100} &= x^4 = e^{4\log x}, c_{100} = 1,\quad
	\g_{101} = x^2 = e^{2\log x}, c_{101} = -2,
\\
	\g_{0000} &= \g_{1000} = \g_{1010} = \log x = \log_1 x,
	 c_{0000} = c_{1000} = 4, c_{1010} = 2 .
\end{align*}
The tree representing $\g$ is shown in Figure~\ref{fig:graph}.
\end{ex}

\begin{figure*}[htbp] 
   \centering
   \includegraphics{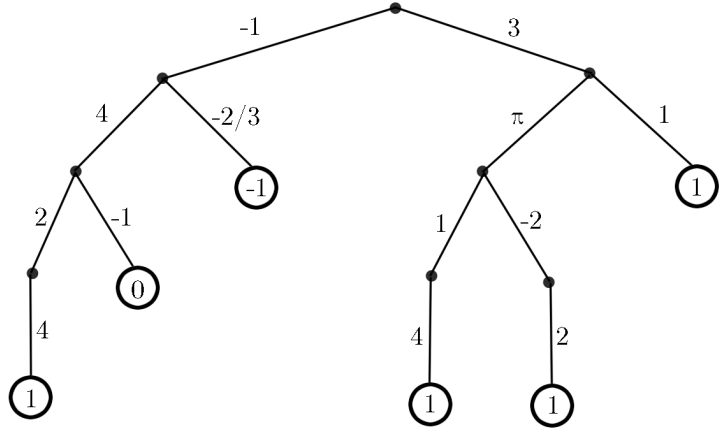} 
   \caption{The Schmeling tree corresponding to monomial $\g$}
   \label{fig:graph}
\end{figure*}

There are notions of ``height'' and ``depth'' associated with
such a tree-representation of a transmonomial $\g$.
Let us say that $\g$ has \Def{tree-height} $N$ iff the
longest branch (from root to leaf) has $N$ edges;
and that $\g$ has \Def{tree-depth} $M$ iff $M$ is the
largest label on a leaf.  So the example in
Figure~\ref{fig:graph} has tree-height $4$ and
tree-depth $1$.  These definitions are convenient for
analysis of such a tree diagram.  They may differ from
the notions of
``height'' and ``depth'' defined in \cite{edgar}.
If $\g$ has height $N$ (that is, $\g \in \G_{N\bullet}$),
then $\g$ has tree-height at most $N+1$.  But it may
be much smaller; for example,
$$
	\g = e^{\displaystyle e^{e^x}+x}
$$
has tree-height $1$ but height $3$.
If $\g$ has depth $M$ (that is, $\g \in \G_{\bullet M}$),
then $\g$ has tree-depth $M$ or $M+1$,
at least if we have allowed negative values of $M$.
The same example $\g$ has depth $0$ and tree-depth $0$,
but
$$
	\g = e^{\displaystyle e^{e^x}+x^2}
$$
has depth $0$ and tree-depth $1$.

Tree-height and tree-depth behave in the same way as
height and depth under composition on the right by $\log$
or $\exp$.  That is: if $\g$ has tree-height $N$ and
tree-depth $M$, then
$\g\circ\exp$ has tree-height $N$ and tree-depth $M-1$,
and $\g\circ\log$ has tree-height $N$ and tree-depth $M+1$.
Any $\g \in \G_0$ has tree-depth $\le -1$, so
$\g\circ \exp$ has tree-depth $\le 0$.  If tree-depth
is $\le 0$ is it sometimes convenient to extend all branches
(using single edges with coefficient $1$) so that all
leaves are $x$.

\subsection*{Schmeling Tree and Deriviative}
Let $\g$ be a transmonomial represented as a Schmeling tree.  What are the
monomials in the support of the derivative $\g'$?  Since
$\g = e^{L}$, the derivative is $e^L L'$, so the monomials
in its support have the form $\g$ times a monomial in
the support of $L'$.  Continuing this recursively,
we see that a monomial in $\supp \g'$ looks like
\begin{equation*}
	\g \;\g_{i_1}\; \g_{i_1 i_2} \;
	\cdots\;\g_{i_1 i_2 \cdots i_{s-1}}\; (\log_m x)'
\tag{1}
\end{equation*}
where $s$ is chosen so that $\g_{i_1 i_2 \cdots i_s} = \log_m x$,
and of course $(\log_m x)'$ is itself a monomial.
(The monomials
$\g_{i_1}, \cdots, \g_{i_1 i_2 \cdots i_{s-1}}$ are large, but
if $m > 0$, then the monomial $(\log_m x)'$ is small.)
So there is
one term of $\g'$ for each branch (from root
to leaf) of the tree.
In the derivative $\g'$, the coefficient for monomial (1) is
$$
	c_{i_1}\; c_{i_1 i_2} \; \cdots\;c_{i_1 i_2 \cdots i_{s}} ,
$$
the product of all the edge-labels on the corresponding branch.

\begin{ex}
Following the tree in the example (Figure~\ref{fig:graph}),
we may write
the derivative $\g'$ with one term for each of the six branches
of the tree:
\begin{align*}
	\g' =\quad &\;
	(-1)\cdot4\cdot2\cdot4\cdot\g\g_0\g_{00}\g_{000}\cdot (\log x)'
	\\ +&\;
	(-1)\cdot4\cdot(-1)\cdot\g\g_0\g_{00}\cdot x'
	\\ +&\;
	(-1)\cdot(-2/3)\cdot\g\g_0\cdot(\exp x)'
	\\ +&\;
	3\cdot\pi\cdot1\cdot4\cdot\g\g_1\g_{10}\g_{100}\;\cdot(\log x)'
	\\ +&\;
	3\cdot\pi\cdot(-2)\cdot2\cdot\g\g_1\g_{10}\g_{101}\;\cdot(\log x)'
	\\ +&\;
	3\cdot1\cdot\g\g_1\cdot(\log x)' .
\end{align*}
\end{ex}

The monomial (1) without the first factor $\g$ is an
element of the set $\lsupp(\g)$.  The magnitude of $\g'$
is the monomial we get following the left-most branch
\begin{equation*}
	\g \;\g_{0}\; \g_{00} \;
	\cdots\;\g_{00 \cdots 0}\; (\log_m x)' ,
\end{equation*}
since all other branches are far smaller.
	
In the special case where the tree-depth of $\g$ is $\le 0$, and
we extend all branches so that all leaves are $x$, the monomials
in $\g'$ are
\begin{equation*}
	\g \;\g_{i_1}\; \g_{i_1 i_2} \;
	\cdots\;\g_{i_1 i_2 \cdots i_{s-1}}
\tag{0}
\end{equation*}
where $s$ is chosen with $\g_{i_1 i_2 \cdots i_s} = x$.
In this case, all monomials
$\g_{i_1} \cdots \g_{i_1 i_2 \cdots i_{s-1}}$ in $\lsupp \g$
are large, and we have
$$
	\m :=
	\max \lsupp \g = \g_{0}\g_{00}\cdots\g_{00 \cdots 0}
	= \mag(\g'/\g) .
$$
Then $\g' \sim \g\m$, and we get
$\g^{(n)} \sim \g \m^n$ for all $n \in \N$
by induction using $\m^2 \fgt \m'$ \cite[\EWKB(iv)]{edgar}.
(This may not hold when $\g$ has positive tree-depth.)

\begin{pr}  Let $T,V \in \T$.  Assume all monomials in $T$
have tree-depth $\le 0$, and $V \fst 1/\m$ where
$\m = \max\lsupp T$.  Then
$$
	T^{(n)} V^n,\qquad n \in \N
$$
is point-finite, so the series
$$
	\sum_{n=0}^\infty T^{(n)}(x) \frac{V^n}{n!}
$$
converges in the asymptotic topology.
\end{pr}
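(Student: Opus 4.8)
The plan is to deduce the statement from the stronger claim that there exist a monomial $\m_0$, a small monomial $\sigma\fst 1$, and a grid (in the well-based case, a well-ordered set) $\GRID$ of monomials all satisfying $\fsteq 1$, such that $\supp(T^{(n)}V^n)\subseteq\m_0\,\sigma^n\,\GRID$ for every $n\in\N$. Granting this, point-finiteness and convergence are short. For point-finiteness: if a monomial $\q$ lies in $\supp(T^{(n)}V^n)$, then $j_n:=\q\,\m_0^{-1}\,\sigma^{-n}\in\GRID$; since $\sigma\fst 1$ the powers $\sigma^{-n}$ strictly increase in the $\fgt$-order, hence so do the $j_n$ (for $\q$ fixed) as $n$ runs over the admissible indices; but a grid — or a well-ordered set — contains no infinite strictly $\fgt$-increasing sequence, so only finitely many indices $n$ are admissible. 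For the support union, $\bigcup_n\supp(T^{(n)}V^n)\subseteq\m_0\cdot\bigcup_n\sigma^n\GRID\subseteq\m_0\cdot(\{\sigma\}\cup\GRID)^*$, which is a grid (resp., by Proposition~\ref{womonoid}, a well-ordered set) because $\sigma$ and the elements of $\GRID$ other than $1$ are small. So $(T^{(n)}V^n/n!)_n$ is a summable family; a summable family has a sum, which is by definition its limit in the asymptotic topology (and one recognizes that sum as $T\circ(x+V)$).

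All the work is therefore in the displayed containment. I would take $\m_0=\mag T$, $\m=\max\lsupp T$, and $\sigma=\m\cdot\mag V$, which is small exactly because $V\fst 1/\m$. It suffices to prove the analogous containment for a single monomial $\g\in\supp T$ and then merge the (finitely many, resp. well-orderedly many) resulting sets, using $\g\fsteq\mag T$ and $\m_\g:=\max\lsupp\g\fsteq\m$. Here the Schmeling-tree picture from the preceding pages is the tool: since $\g$ has tree-depth $\le 0$ we may extend its branches so that every leaf is $x$, and then the monomials of $\g'$ are $\g$ times the branch-products making up $\lsupp\g$ — each of them large or $1$, and $\fsteq\m_\g$ — with $\g'\sim\g\,\m_\g$. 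Iterating the product rule, the monomials of $\g^{(n)}$ are $\g$ times $n$-fold products of monomials drawn from the set $\widetilde\AA_\g$ obtained by closing $\lsupp\g$ under $\n\mapsto\lsupp(\n)$, with $\g^{(n)}\sim\g\,\m_\g^n$. Dividing each such $n$-fold product by $\m_\g^n$ makes every factor a monomial $\fsteq 1$, and absorbing the factor $V^n=(\mag V)^n(1+W)^n$ (with $W$ small) then gives $\supp(\g^{(n)}V^n)\subseteq\g\,(\m_\g\,\mag V)^n\,\GRID_\g$ for a fixed grid / well-ordered set $\GRID_\g$ of monomials $\fsteq 1$. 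Since $\g\,(\m_\g\,\mag V)^n=\m_0\,\sigma^n\cdot(\g\,\m_0^{-1})\cdot(\m_\g\,\m^{-1})^n$ with $\g\,\m_0^{-1}\fsteq 1$ and $\m_\g\,\m^{-1}\fsteq 1$ fixed, one folds $(\g\,\m_0^{-1})\cdot\{\m_\g\,\m^{-1}\}^*\cdot\GRID_\g$, over all $\g\in\supp T$, into a single $\GRID$, completing the claim.

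The main obstacle is this structural step, and it splits into two genuine points, both of which are exactly where the tree-depth $\le 0$ hypothesis is needed and where the care of Theorem~\ref{wellderiv} and Propositions~\ref{u0}--\ref{u2} pays off. First, that the iterated logarithmic-support closure $\widetilde\AA_\g$ is a legitimate support set (a grid, resp. well-ordered) consisting of monomials $\fsteq\m_\g$; this fails without the tree-depth restriction, just as the naive ``$\widetilde\AA$ closed under $\m\mapsto\supp(\m')$'' does in the note following Proposition~\ref{u0}. Second, that multiplying the $n$-th derivative by $V^n$ genuinely collapses the $n$-fold products into the single fixed set $\GRID$ rather than one growing with $n$; this is the role of the hypothesis $V\fst 1/\m$, which forces $\m_\g\,\mag V\fst 1$ and hence turns the ``height-wins'' growth $\m_\g^n$ of $\mag(\g^{(n)})$ into the decay $\sigma^n$ of $\mag(\g^{(n)}V^n)$. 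Once these two are in hand, the remaining bookkeeping — in particular verifying that the merge over $\supp T$ stays within one grid (resp. well-ordered set), which is uniform because $\supp T$ already is one — is routine.
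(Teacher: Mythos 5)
Your route is genuinely different from the paper's. The paper's proof is short: fix one finite ratio set $\bmu$ witnessing $V\fst^{\ebmu}1/\m$ and all of the relations $T^{(n+1)}V\fst^{\ebmu}T^{(n)}$ (these follow from $T^{(n+1)}\sim\m T^{(n)}$, which is where the tree-depth $\le 0$ hypothesis enters), and then quote \cite[\Edominateprop]{edgar}, which converts the chain $T\fgt^{\ebmu}T'V\fgt^{\ebmu}T''V^2\fgt^{\ebmu}\cdots$ directly into point-finiteness. You instead try to exhibit an explicit containment $\supp(T^{(n)}V^n)\subseteq\m_0\sigma^n\GRID$ from the Schmeling tree. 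The consequences you draw from that containment are correct: no grid or well-ordered set admits an infinite strictly $\fgt$-increasing sequence, and the monoid argument (Proposition~\ref{womonoid} in the well-based case) does put the union of supports inside one admissible support set. If completed, your version would have the merit of making explicit the uniformity that the paper's citation hides.

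But the containment itself carries the entire weight of the proposition, and it is asserted rather than proved; the results you invoke do not deliver it. Concretely: (1)~you need the iterated $\lsupp$-closure $\widetilde\AA_\g$ to be a single grid (resp.\ well-ordered set) of monomials $\fsteq\m_\g$. Propositions~\ref{u0}--\ref{u2} close a set under $\n\mapsto\supp(\n')$ or $\n\mapsto\supp(x\e\n')$, not under $\n\mapsto\lsupp\n$, and they do so under hypotheses ($\n^\dagger\fsteq 1$, which forces $\AA\subset\WW_1$; or $\n^\dagger\fsteq 1/(x\e)$) that are not satisfied here; Theorem~\ref{wellderiv} controls one derivative, not the closure. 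Even the bound $\fsteq\m_\g$ on the closure needs its own induction via $\n\fgt\n^\dagger$ from \cite[\EWKB]{edgar}. (2)~The ``routine merge over $\supp T$'' is not routine: $\supp T$ is infinite in general, so you are uniting infinitely many sets $\GRID_\g$, and the assertion that this union lies in one grid is exactly the uniformity in $\g$ that must be proven --- the fact that $\supp T$ is itself a grid does not supply it. So the skeleton is sound but the load-bearing step is missing; filling it in along your lines amounts to reproving the uniform-support results for iterated derivatives (\cite[\Ederivexist]{edgar}, \cite[\Wderivconvergence]{edgarw}) that the paper's one-line appeal to \cite[\Edominateprop]{edgar} already packages.
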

\begin{proof}
Fix finite set $\bmu \subset \Gsmall$ so that
all far-smaller inequalities are witnessed by $\bmu$:
in particular, $V \fst^{\ebmu} 1/\m$ and
$T = \dom(T)\cdot(1+S)$ with $S \fst^{\ebmu} 1$.  Note
that $T^{(n+1)} \sim \m T^{(n)}$.  Then
$$
	T \fgt^{\ebmu} T'V \fgt^{\ebmu} T''V^2 \fgt^{\ebmu} \dots,
$$
so by \cite[\Edominateprop]{edgar}
the series $\sum T^{(n)}(x) V^n/n!$
is point-finite.
\end{proof}

\begin{re}
The same result should be true for
other $T$, perhaps using $\tsupp$ not
$\lsupp$; see \cite[\Wtsupp]{edgarw}.
\end{re}

\section{Properties of Composition}
Composition $T \circ S$ is defined when $T,S \in \T$ and $S$
is large and positive.  As usual we will
write $T = T(x)$ and $T\circ S = T(S)$.

\begin{no}\label{no:LPSS}
Write $\LP$ for the group of large positive transseries.
And $\SS$ for the subgroup $\SS =
x + \o(x) = \SET{T \in \T}{\dom T = x}
= \SET{T \in \T}{T \sim x}$.
For now, think of $\LP$ and $\SS$ as sets.
They are closed under composition.
For existence of inverses: well-based,
Proposition~\ref{inverse}; grid-based,
\cite[\Winv]{edgarw}.
\end{no}

Many basic properties of composition may be proved by
applying an inductive principle such as Proposition~\ref{inductiveprinciple}
to the left composand $T$. (I may---perhaps misleadingly---call
this ``induction on the height''.)  Here are some examples.

\begin{pr}\label{ineq1}
Let $T, T_1, T_2 \in \T, S \in \P$.  Then
{
\allowdisplaybreaks
\begin{align*}
   T >0 \Longrightarrow{}& T \circ S >0,
\\ T =0 \Longrightarrow{}& T \circ S =0,
\\ T <0 \Longrightarrow{}& T \circ S <0,
\\ T_1 < T_2 \Longrightarrow{}& T_1 \circ S < T_2 \circ S,
\\ T_1 = T_2 \Longrightarrow{}& T_1 \circ S = T_2 \circ S,
\\ T_1 > T_2 \Longrightarrow{}& T_1 \circ S > T_2 \circ S,
\\ T \fst 1 \Longrightarrow{}& T \circ S \fst 1,
\\ T \fgt 1 \Longrightarrow{}& T \circ S \fgt 1,
\\ T \fe 1 \Longrightarrow{}& T \circ S \fe 1,
\\ T \sim 1 \Longrightarrow{}& T \circ S \sim 1,
\\ T_1 \fst T_2 \Longrightarrow{}& T_1 \circ S \fst T_2 \circ S,
\\ T_1 \fgt T_2 \Longrightarrow{}& T_1 \circ S \fgt T_2 \circ S,
\\ T_1 \fe T_2 \Longrightarrow{}& T_1 \circ S \fe T_2 \circ S,
\\ T_1 \sim T_2 \Longrightarrow{}& T_1 \circ S \sim T_2 \circ S,
\\ T\circ S \fe \mag(T \circ S) ={}& \mag((\mag T)\circ S) \fe (\mag T)\circ S,
\\ T\circ S \sim \dom(T \circ S) ={}& \dom((\dom T)\circ S)
\sim (\dom T)\circ S.
\end{align*}
}
\end{pr}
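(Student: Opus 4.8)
The plan is to reduce the entire list to one assertion about monomials and prove that by induction on height. Fix throughout a large positive $S\in\P$. By the basic properties of composition in \cite[\Ecompexist]{edgar}, $T\mapsto T\circ S$ is a continuous field homomorphism with $x\circ S=S$, $e^{A}\circ S=e^{A\circ S}$, and $(\log_k x)\circ S=\log_k S$; in particular $\big(\sum_{\g}c_{\g}\g\big)\circ S=\sum_{\g}c_{\g}(\g\circ S)$ and $(A/B)\circ S=(A\circ S)/(B\circ S)$. The fact I would establish first is
\begin{equation*}
(\star)\qquad \g\ \text{a monomial with}\ \g\fst1\ \Longrightarrow\ \g\circ S\fst1,
\end{equation*}
equivalently, applying $(\star)$ to $\g_1/\g_2$, that $\g_1\fst\g_2\Rightarrow\g_1\circ S\fst\g_2\circ S$ for monomials.

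Granting $(\star)$, the list follows quickly. For $T\ne0$ write $\dom T=c\,\mag T$ and $R=T-\dom T$; every $\g\in\supp R$ has $\g\fst\mag T$, so $c_{\g}(\g\circ S)\fst(\mag T)\circ S$ by $(\star)$ and hence $R\circ S=\sum_{\g}c_{\g}(\g\circ S)\fst(\mag T)\circ S$, the support of the sum lying in the union of the supports. Thus $T\circ S=c\,(\mag T)\circ S+R\circ S$ gives $\mag(T\circ S)=\mag\big((\mag T)\circ S\big)$ and $\dom(T\circ S)=\dom\big((\dom T)\circ S\big)=c\,\dom\big((\mag T)\circ S\big)$. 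Since $(\mag T)\circ S=e^{(\log\mag T)\circ S}>0$, its leading coefficient is positive, so $\operatorname{sign}(T\circ S)=\operatorname{sign}(c)=\operatorname{sign}(T)$; this yields the sign statements ($T>0\Rightarrow T\circ S>0$, and likewise for $<$ and $=$), hence the order statements via $T_1<T_2\iff T_2-T_1>0$ and $(T_2-T_1)\circ S=T_2\circ S-T_1\circ S$, and the last two displayed lines (using $W\fe\mag W$ and $W\sim\dom W$). For $T\fst1$ we have $\supp T\subseteq\Msmall$, so by $(\star)$ every $\g\circ S\fst1$ and the support of $T\circ S$ is again small, whence $T\circ S\fst1$; the lines $T\fgt1\Rightarrow T\circ S\fgt1$, $T\fe1\Rightarrow T\circ S\fe1$, $T\sim1\Rightarrow T\circ S\sim1$ then follow from $\mag(T\circ S)=\mag\big((\mag T)\circ S\big)$ together with $(\star)$ (for $\sim$ one also uses that $R\circ S$ is small). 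Finally, for the four lines comparing $T_1$ with $T_2$: if $T_2\ne0$ then $T_2\circ S\ne0$ by sign preservation, so divide by $T_2$, apply the matching ``versus $1$'' line to $T_1/T_2$, and use $(T_1/T_2)\circ S=(T_1\circ S)/(T_2\circ S)$; the cases $T_2=0$ are degenerate ($T_1\fst0$ forces $T_1=0$, etc.).

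It remains to prove $(\star)$, by induction on the height $N$ of $\g$ (so $\g\in\G_{N,\bullet}$) --- this is the ``induction on the height'' mentioned above, applied to $\g$. When $N=0$, $\g$ is a finite product of real powers of $x$ and of iterated logarithms, $\log\g=a_0\log x+a_1\log_2 x+\cdots$, and $\g\fst1$ says $\log\g<0$, i.e.\ the first nonzero $a_i$ is negative (since $\log x\fgt\log_2 x\fgt\cdots\fgt1$); composing, $\log(\g\circ S)=a_0\log S+a_1\log_2 S+\cdots$, and $\log S\fgt\log_2 S\fgt\cdots\fgt1$ as well, so $\log(\g\circ S)<0$ and $\g\circ S=e^{\log(\g\circ S)}\fst1$. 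When $N\ge1$, $\g=e^{L}$ with $L$ purely large of height $\le N-1$, and $\g\fst1$ iff $L<0$; write $\dom L=c\,\mag L$ with $c<0$. The induction hypothesis, applied to the monomials of $\supp L$ (all of height $\le N-1$), gives $\g'\circ S\fst(\mag L)\circ S$ for $\g'\in\supp L$ with $\g'\ne\mag L$, and $(\mag L)\circ S\fgt1$ (apply it to $1/\mag L$); hence $\mag(L\circ S)=\mag\big((\mag L)\circ S\big)\fgt1$, and the leading coefficient of $L\circ S$ equals $c$ times the (positive) leading coefficient of $(\mag L)\circ S=e^{(\log\mag L)\circ S}$, so it is negative. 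Thus the purely large part of $L\circ S$ is negative, i.e.\ $\g\circ S=e^{L\circ S}\fst1$.

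The main obstacle is precisely this inductive step: one must see that composing the dominant large monomial $\mag L$ with $S$ still yields something $\fgt1$, and that the other monomials produced by the composition --- which can include small ones --- do not overtake $(\mag L)\circ S$. This is exactly where the hypothesis at lower height is needed, and one must confirm that the height genuinely decreases: $\log\g$ has height $N-1$, and every monomial of $\supp L$, in particular $\mag L$, has height $\le N-1$. A minor bookkeeping point suppressed above is the treatment of iterated logarithms --- either carry $\log_M$ through the induction as in the base case, or first reduce a general $\g\in\G_{N,M}$ to the log-free case via $\g=\widetilde\g\circ\log_M$ with $\widetilde\g\in\G_{N,0}$, using $\log_M S\in\P$ and a lower-complexity instance of $(\star)$ to see that composition with $\log_M$ preserves $\fst$.
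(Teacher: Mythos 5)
The paper states Proposition~\ref{ineq1} without any proof, remarking only that such facts ``may be proved by applying an inductive principle \dots to the left composand''; your argument is a correct realization of exactly that strategy, reducing the whole list to the single monomial statement $(\star)$ and proving $(\star)$ by induction on the height of the left composand. The reduction and the induction are sound --- the two points that needed checking, namely passing from ``$\dom(L\circ S)$ is large with negative coefficient'' to $e^{L\circ S}\fst 1$, and keeping the quotients $\g'/\mag L$ at height $\le N-1$ via the group structure of the $\G_{N-1,M}$, both hold --- and the only loose end is the log/depth bookkeeping you already flag, which is genuinely routine.
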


Some corresponding things may fail for the other composand:
Let $T=e^x$, $S_1=x+\log x$, and $S_2=x$.
Then $S_1 \fe S_2$ but $T\circ S_1 \fgt T\circ S_2$;
$\dom(T \circ S_1) \not\fe T\circ \dom S_1$.

\begin{pr}\label{ineq}
Let $S_1, S_2 \in \LP$, $S_1 < S_2$.
\begin{itemize}
\item[{\rm(a)}] if $c \in \R, c>0$, then $S_1^c < S_2^c$,
\item[{\rm(b)}] if $c \in \R, c<0$, then $S_1^c > S_2^c$,
\item[{\rm(c)}] $\log(S_1) < \log(S_2)$.
\item[{\rm(d)}] $\exp(S_1) < \exp(S_2)$,
\end{itemize}
\end{pr}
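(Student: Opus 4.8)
The plan is to deduce all four items from the single fact that $\exp$ is strictly increasing on $\T$ (equivalently on $\T_{\bullet,\bullet}$): then (d) is a special case, and (c), (a), (b) follow in that order. So the first and main step is to show that $A < B$ in $\T$ implies $e^A < e^B$. Since $e^A$ is a monomial, hence positive, and $e^B - e^A = e^A(e^{B-A}-1)$, it suffices to prove $e^C > 1$ whenever $C > 0$. Here I would invoke the Canonical Additive Decomposition (Proposition~\ref{C_add}), writing $C = L + c + V$ with $L$ purely large, $c$ constant, $V$ small, and split into cases. If $L \ne 0$, then $c + V \fst L$, so $\dom C = \dom L$; since $C > 0$ and $C \fe L \fgt 1$, the leading coefficient of $L$ is positive, i.e.\ $L > 0$, so $e^L$ is a large monomial and $e^C = e^L e^c e^V \fe e^L \fgt 1$ is positive and infinitely large, whence $e^C > 1$ by the sign/magnitude observations following Remark~\ref{relationship}. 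If $L = 0$ but $c \ne 0$, then $C \sim c$ with $C > 0$ forces $c > 0$, and $e^C = e^c e^V \sim e^c > 1$, so $e^C > 1$. If $L = 0$ and $c = 0$, then $C = V$ is small and positive, and since $V^n \fst V$ for $n \ge 2$ we get $e^V - 1 = V + V^2/2! + \cdots = V + \o(V) \sim V > 0$, so again $e^C > 1$. This establishes monotonicity of $\exp$, and in particular (d).

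For (c): since $S_1, S_2 \in \LP$ are large positive, $\log S_1$ and $\log S_2$ are defined and $\exp(\log S_i) = S_i$. If $\log S_1 \ge \log S_2$ held, applying the strictly increasing map $\exp$ would give $S_1 \ge S_2$, contrary to hypothesis; hence $\log S_1 < \log S_2$.

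For (a) and (b), recall $S_i^c = \exp(c\log S_i)$. When $c > 0$, multiplying the inequality $\log S_1 < \log S_2$ from (c) by $c$ and applying $\exp$ gives $S_1^c < S_2^c$. When $c < 0$, write $S_i^c = 1/S_i^{-c}$; by (a) we have $0 < S_1^{-c} < S_2^{-c}$, and taking reciprocals in the ordered field $\T$ reverses the inequality, so $S_1^c > S_2^c$.

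The only delicate point is the case $L = 0 = c$ of the first step: controlling the tail $V^2/2! + V^3/3! + \cdots$ of the exponential series so as to conclude $e^V = 1 + V + \o(V)$. This rests on the point-finiteness/convergence of $\sum V^n/n!$ for small $V$ together with $V^n \fst V$ for $n \ge 2$ (both standard; see \cite{edgar}). Everything else is bookkeeping with the ordered-field axioms, the positivity of monomials, and ``height wins.''
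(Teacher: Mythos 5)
Your proof is correct, and its architecture differs from the paper's in a way worth recording. The paper proves (a), (c), and (d) each directly: (a) and (c) via the canonical multiplicative decomposition $S_i = a_i e^{L_i}(1+U_i)$ together with the binomial and logarithm series, and (d) via the canonical additive decomposition; in each instance it runs the same trichotomy (the composands differ first in their purely large parts, or first in their constants/coefficients, or only in their small parts), and in the third case it needs a telescoping factorization such as $S_2^c-S_1^c = a_1^ce^{cL_1}(U_2-U_1)(c+\cdots)$ to extract the sign. You instead prove only the strict monotonicity of $\exp$ on $\T$ --- which, after factoring $e^B-e^A=e^A(e^{B-A}-1)$, is essentially the paper's case analysis for (d) applied to the single difference $C=B-A$ --- and then derive (c) from (d) by inverting, and (a), (b) from (c) via $S^c=\exp(c\log S)$ and reciprocation. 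This buys you one case analysis instead of three and avoids the case-(iii) factorization entirely; the cost is explicit reliance on the compatibility identities $\exp(\log S)=S$ and $S^c=\exp(c\log S)$ for $S\in\LP$, which are true and consistent with the paper's definition of real powers through the multiplicative decomposition, but should be cited rather than assumed silently. One small slip to repair: for a general $A\in\T$ the element $e^A$ is \emph{not} a monomial; writing $A=L+c+V$ it equals $e^{c}\,e^{L}\,(1+V+\tfrac{V^2}{2}+\cdots)$, a positive real times a monomial times something $\sim 1$, hence still strictly positive, which is all your argument actually needs.
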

\begin{proof} (a)
Write the canonical multiplicative decomposition
$S_1 = a_1 e^{L_1} (1+U_1)$ as in \ref{C_mult},
and similarly $S_2 = a_2 e^{L_2} (1+U_2)$.
Then
\begin{equation*}
	S_1^c = a_1^c e^{cL_1} \left(1+cU_1+\sum_{j=2}^\infty c_jU_1^j\right) ,
	\quad
	S_2^c = a_2^c e^{cL_2} \left(1+cU_2+\sum_{j=2}^\infty c_jU_2^j\right) ,
\tag{1}
\end{equation*}
for certain (binomial) coefficients $c_j$.
Now for
$S_1 < S_2$ there are these cases:
(i)~$L_1<L_2$;
(ii)~$L_1=L_2, a_1<a_2$;
(iii)~$L_1=L_2, a_1=a_2, U_1<U_2$.
But in each of these cases, applying equations (1)
shows $S_1^c < S_2^c$.  For case~(iii):
$$
	S_2^c-S_1^c = a_1^c e^{cL_1} (U_2-U_1)\left(
	c+\sum_{j=2}^\infty c_j(U_2^{j-1}+U_2^{j-2}U_1+
	\cdots+U_1^{j-1})\right) > 0
$$
since the terms in the $\sum$ are all ${}\fst 1$.

(b) is similar.

(c) Write canonical multiplicative decomposition
$S_1 = a_1 e^{L_1} (1+U_1)$ as in \ref{C_mult} and
similarly $S_2 = a_2 e^{L_2} (1+U_2)$.  Then
\begin{align*}
	\log(S_1) &= \log(a_1) + L_1 + U_1 + \sum_{j=2}^\infty c_j U_1^j ,
	\\
	\log(S_2) &= \log(a_2) + L_2 + U_2 + \sum_{j=2}^\infty c_j U_2^j , 
\end{align*}
for certain coefficients $c_j$.
The same cases (i)---(iii) may be used, and in each case
we get $\log(S_1) < \log(S_2)$.  Case (iii) has reasoning as
we did before for~(a).

(d) For this, write the canonical additive decomposition
$S_1 = L_1 + c_1 + U_1$ as in \ref{C_add},
and similarly $S_2 = L_2 + c_2 + U_2$.  Then
\begin{equation*}
	e^{S_1} = e^{c_1} e^{L_1} \left(1+U_1+\sum_{j=2}^\infty c_j U_1^j\right),
	\qquad
	e^{S_2} = e^{c_2} e^{L_2} \left(1+U_2+\sum_{j=2}^\infty c_j U_2^j\right),
\end{equation*}
for certain coefficients $c_j$.
For $S_1<S_2$ there are three cases:
(i)~$L_1<L_2$;
(ii)~$L_1=L_2, c_1<c_2$;
(iii)~$L_1=L_2, c_1=c_2, U_1<U_2$.
In all three cases we get $e^{S_1} < e^{S_2}$.
\end{proof}

\begin{pr}\label{explog2}
{\rm(a)}~If $T \in \T$, $T>0$, $T\ne 1$, then $\log T < T - 1$.
{\rm(b)}~If $T \in \T$, $T \ne 0$, then $\exp T > T + 1$
\end{pr}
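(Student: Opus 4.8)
The plan is to reduce (b) to (a) and then prove (a) by splitting cases on the canonical multiplicative decomposition (Proposition~\ref{C_mult}). For (b): given $T\ne0$, set $S:=\exp T$. Then $S>0$, and $S\ne1$ --- for otherwise $T=\log(\exp T)=\log S=\log1=0$ --- while $\log S=\log(\exp T)=T$. Applying (a) to $S$ gives $T=\log S<S-1=\exp T-1$, which is exactly (b). This uses only that $\log$ inverts $\exp$ and that $\log$ is defined on every positive transseries (not merely the large ones), via $\log\bigl(a\,e^L(1+U)\bigr)=\log a+L+\log(1+U)$.

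For (a): write $T=a\,e^L(1+U)$ (canonical multiplicative decomposition, Proposition~\ref{C_mult}), with $a\in\R$, $a>0$ (as $T>0$), $e^L\in\G$, and $U$ small, so that $\log T=\log a+L+\log(1+U)$. Since $L$ is purely large, trichotomy in the ordered field gives exactly one of $L>0$, $L<0$, $L=0$, and these are equivalent to $e^L\fgt1$, $e^L\fst1$, $e^L=1$, i.e.\ to $T\fgt1$, $T\fst1$, $T\fe1$ respectively. I would treat the three cases separately.

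If $L>0$, then $T$ is large positive, so $T-1\sim T\sim a\,e^L$, while $\log T\sim L$ (the constant $\log a$ and the small term $\log(1+U)$ are negligible against the purely large $L$). A basic growth estimate gives $e^L\fgt L$ for purely large positive $L$ --- this is the ``height wins'' phenomenon, \cite[\Eheightwins]{edgar} --- whence $T-1\fgt\log T$; since $T-1>0$, Remark~\ref{relationship} yields $\log T<T-1$. If $L<0$, then $T$ is small positive, so $T-1\sim-1$, while $\log T\sim L$ with $L$ purely large and negative; thus $\log T\fgt T-1$ and $\log T<0$, and Remark~\ref{relationship} again gives $\log T<T-1$. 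If $L=0$, then $T=a(1+U)$ and $(T-1)-\log T=(a-1-\log a)+\bigl(aU-\log(1+U)\bigr)$, where $aU-\log(1+U)=(a-1)U+\tfrac12U^2-\tfrac13U^3+\cdots$ is small. When $a\ne1$, the real number $a-1-\log a$ is positive (the elementary inequality) and dominates the small remainder, so $(T-1)-\log T\sim a-1-\log a>0$. When $a=1$ we have $T=1+U$ with $U\ne0$ (since $T\ne1$), and $(T-1)-\log T=U-\log(1+U)=U^2\bigl(\tfrac12-\tfrac13U+\tfrac14U^2-\cdots\bigr)>0$: the first factor is a nonzero square, hence positive in the ordered field $\T$, and the second factor is $\sim\tfrac12>0$.

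The only step that is not pure bookkeeping is the estimate $e^L\fgt L$ for purely large positive $L$ (equivalently, $\log S\fst S$ for large positive $S$); this is where the height-counting machinery of \cite{edgar} enters, and I would simply invoke it. Everything else is already in place: the series $\log(1+U)$ converges --- and is small, with the stated leading behaviour --- because $U$ is small, nonzero squares are positive, and $\fgt$ translates to strict inequality via Remark~\ref{relationship}. One secondary point to confirm is that $\log$ really does extend to all positive transseries, legitimizing the reduction of (b) to (a); if one prefers to avoid this, (b) can be proved directly in exact parallel with (a), using the canonical additive decomposition $T=L+c+U$ and the elementary inequality $e^c>c+1$.
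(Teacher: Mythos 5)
Your proof is correct, and for part (a) it is essentially the paper's own argument: the canonical multiplicative decomposition $T=a\,e^L(1+U)$, the trichotomy on $L$, the height-wins estimate $e^L\fgt L$ for purely large positive $L$, the real inequality $a-1-\log a>0$ for the constant case, and the positivity of $U^2(\tfrac12+\o(1))$ for the final case. The one genuine divergence is part (b): the paper proves it independently, running the parallel case analysis on the canonical additive decomposition $T=L+c+V$ and the expansion $e^T=e^ce^L(1+V+\cdots)$, whereas you reduce (b) to (a) by setting $S=\exp T$ and using that $\log$ and $\exp$ are mutually inverse between $\T$ and its positive part, so that $S>0$, $S\ne1$, and $\log S=T$ yield $T<\exp T-1$ at once. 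Your reduction is shorter and avoids duplicating the case analysis; its only cost is the (true, but worth stating) fact that $\log$ is defined on all positive transseries and inverts $\exp$ globally, which you correctly flag, and which the paper's self-contained version of (b) does not need to invoke. Both routes are sound; you even note the paper's route as the fallback.
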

\begin{proof}
First note: If $L$ is purely large and positive, then
$e^L \fgt L$.  First use \cite[\Eheightwins]{edgar} for
log-free $L$.  Then Proposition~\ref{ineq1} to compose
with $\log_M$ on the inside.  It follows that:
If $T \fgt 1$ and $T>0$, then $e^T \fgt T$.

(a) Write $A = \log(T) - T + 1$; I must show $A < 0$.
Write canonical multiplicative decomposition
$T = a e^L(1+U)$ as in \ref{C_mult}.  Then
$\log(T) = \log(a) + L - \sum_{j=1}^\infty (-1)^jU^j/j$.  Now if
$L>0$, then $T \fgt 1$, $T \fgt L \fgt 1$, so
$A \sim -T < 0$.  If $L<0$, then $T \fst 1 \fst L$,
so $A \sim L < 0$.  So assume $L=0$.  Now if
$a \ne 1$, then $A \sim \log(a)-a+1$, which is $<0$
by the ordinary real Taylor theorem.  So assume $a=1$.
Then if $U \ne 0$ we have
$$
	A = - \sum_{j=1}^\infty \frac{(-1)^j U^j}{j} -(1+U)+1
	= -\frac{U^2}{2} + \o(U^2) < 0.
$$
So the only case left is $U=0$, and that means $T=1$.

(b) Write $A = \exp T - T - 1$; I must show $A > 0$.
Write canonical additive decomposition
$T = L+c+V$ as in \ref{C_add}.  So $\exp T = e^L e^c (1+V+\dots)$.
If $L > 0$, then $T \fgt 1$, $e^T \fgt T \fgt 1$,
so $A \sim e^T > 0$.  If $L<0$, then
$e^T \fst 1$, $T \sim L \fgt 1$, so $A \sim -L < 1$.
So assume $L=0$.  If $c \ne 0$, then
$A \sim e^c-c-1$, which is $>0$ by the ordinary real Taylor theorem.
So assume $c=0$.  Then if $V \ne 0$ we have
$$
	A = \sum_{j=0}^\infty \frac{V^j}{j!} - V - 1
	= \frac{V^2}{2} + \o(V^2) > 0 .
$$
So the only case left is $V=0$, and that means $T = 0$.
\end{proof}

\subsection*{Exponentiality}
Associated to each large positive transseries is an integer
known as its ``exponentiality'' \cite[Exercise~4.10]{hoeven}.
If you compose with $\log$ sufficiently many times on the left, the
magnitude is a leaf $\l_m$.  The number $p$ in the following
result is the \Def{exponentiality} of $Q$, written
$p = \expo Q$.

\begin{pr}\label{exponentiality}
Let $Q \in \LP$.  Then there is $p \in \Z$ and $N \in \N$
so that for all $n \ge N$,
$\log_n \circ\;Q \circ \exp_n \sim \exp_p$.  Equivalently,
$\log_n Q \sim \l_{n-p}$.
\end{pr}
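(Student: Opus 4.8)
The plan is to establish the form $\log_n Q\sim\l_{n-p}$ by induction on the length $s$ of the leftmost branch of the Schmeling tree of $\mag Q$ (the branch obtained by always descending to the child that carries the leading term); this $s$ is finite because every transmonomial has finite tree-height. The computation I would use at every step is: for $Q\in\LP$ write $\mag Q=\g=e^{L}$ with $L$ purely large and positive (forced, since $\mag Q\fgt 1$) and take the canonical multiplicative decomposition $Q=a\,e^{L}(1+U)$ of Proposition~\ref{C_mult}, with $a>0$ and $U$ small; then $\log Q=\log a+L+\log(1+U)$, and as $L\fe\mag L\fgt 1$ while $\log a$ is a constant and $\log(1+U)$ is small, the summand $L$ swamps the other two, so $\log Q=L\,(1+\text{small})$. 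Hence $\log Q\in\LP$, $\mag(\log Q)=\mag L$, and the leftmost branch of $\mag(\log Q)$ is shorter by one edge.

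For the base case $s=0$, $\g$ is a leaf, say $\g=\l_m$, so $Q\sim a\l_m$ with $a>0$; I claim $\log_n Q\sim\l_{m+n}$ for all $n\ge 1$, which is the conclusion with $p=-m$ and $N=1$. This is a one-step induction on $n$ using $\log\l_k=\l_{k+1}$ (true for every $k\in\Z$) and $\l_{k+1}\fgt 1$: from $\log_n Q=\l_{m+n}(1+W)$ with $W$ small, $\log_{n+1}Q=\l_{m+n+1}+\log(1+W)\sim\l_{m+n+1}$; the base $n=1$ is the same computation starting from $Q=a\l_m(1+U)$, the constant $\log a$ absorbed because $\l_{m+1}\fgt 1$.

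For the inductive step $s\ge 1$, $\g$ is not a leaf, so by the computation above $\log Q\in\LP$ with $\mag(\log Q)=\mag L$ whose leftmost branch has length $s-1$; the inductive hypothesis gives $p',N'$ with $\log_n(\log Q)\sim\l_{n-p'}$ for all $n\ge N'$, and since $\log_n(\log Q)=\log_{n+1}Q$ this reads $\log_n Q\sim\l_{n-1-p'}$ for $n\ge N'+1$, i.e.\ the claim with $p=p'+1$, $N=N'+1$. Finally, for the ``equivalently'' clause, enlarge $N$ so that $n-p>0$ and compose on the right by $\exp_n$: $\sim$ survives right composition (Proposition~\ref{ineq1}) and $\l_{n-p}\circ\exp_n=\l_{n-p}\circ\l_{-n}=\l_{-p}=\exp_p$, so $\log_n\circ\,Q\circ\exp_n\sim\exp_p$, and composing once more on the right by $\log_n$ recovers $\log_n Q\sim\l_{n-p}$; the whole argument runs verbatim in the grid-based and well-based cases. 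The step I expect to need the most care is the displayed computation together with its bookkeeping at a leaf --- especially the point that one extra application of $\log$ renormalizes the leading coefficient to $1$, which is exactly what upgrades ``$\fe$'' to ``$\sim$''.
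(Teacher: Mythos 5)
Your argument is correct and is essentially the paper's proof: both rest on the computation $\log Q = L + \log a + \log(1+U) \sim L$ coming from the canonical multiplicative decomposition, iterated until the dominant monomial becomes a leaf $\l_m$. The only difference is bookkeeping --- the paper descends on the height of $\dom Q$ (with the height-$0$ case $Q \sim c\,\l_m^b$ handled separately by applying $\log$ twice), whereas you descend on the leftmost-branch length of the Schmeling tree, which absorbs that case into the inductive step.
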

\begin{proof}
We will use the basic definition for logarithms.
Let $A = c e^L(1+U)$ be the canonical multiplicative
decomposition.  If $A \in \P$, this means $c>0$ and
$L$ is purely large and positive.  Then
$\log A = L + \log c + \sum_{j=1}^\infty ((-1)^{j+1}/j)U^j$.
From this we get: If $A,B \in \LP$, $A \fe B$, then
$\log A \sim \log B$.  Write
$\RR[p,N] := \SET{Q \in \LP}{\log_n Q \sim \l_{n-p}
\text{ for all } n \ge N}$.

(i) $\l_m \in \RR[-m,0]$.

(ii) Let $A = c e^L(1+U) \in \LP$, then
$\dom(\log A) = \dom L$, where also $\dom L \in \LP$ and (unless
$L$ has height $0$) the height of $\dom L$ is less
than the height of $\dom A = ce^L$.  If $\dom L \in \RR[p,N]$ then
$A \in \RR[p+1,N+1]$.

(iii) Let $A$ have height $0$, so $A \sim c \l_m^b$, $c,b \in \R$,
$c>0$, $b>0$.  Then $\log A \sim b \l_{m+1}$ and
$\log_2 A \sim \l_{m+2}$, so $A \in \R[-m,2]$.

These rules cover all $\LP$.
\end{proof}

\begin{re}
Alternate terminology: exponentiality = level.
So Proposition~\ref{exponentiality} says that
the exponential ordered field $\R\lbbb x \rbbb$
is \Def{levelled}.
\end{re}

\begin{ex}
$$
	T \sim 4(\log x)^2 x^{\pi} e^{5 x^2-x}
$$
(so that the dominant term of $T$ is
$4(\log x)^2 x^{\pi} e^{5 x^2-x}$), then
\begin{align*}
	\log \circ\;T \circ \exp &\sim
	5 e^{2x} - e^x + \pi x + 2 \log x + \log 4 \sim 5 e^{2x} ,
	\\
	\log_2 \circ\;T \circ \exp_2 &\sim
	2 e^x + \log 5 \sim 2 e^x ,
	\\
	\log_3 \circ\;T \circ \exp_3 &\sim e^x + \log 2 \sim e^x ,
	\\
	\log_k \circ\;T \circ \exp_k &\sim e^x ,\qquad\qquad\text{for all } k \ge 3 ,
\end{align*}
so $\expo T = 1$.
\end{ex}

\begin{pr}\label{klargelogfree}
If $\expo T=0$, then $\log_{k} \circ\;T \circ \exp_{k}$
is log-free for $k$ large enough.
\end{pr}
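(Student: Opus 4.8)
The plan is to fix $N',M' \in \N$ with $T \in \T_{N',M'}$ and $M'\ge 0$, so that $T$ has depth at most $M'$, and to use Proposition~\ref{exponentiality} with $\expo T = 0$ to fix $N$ with $\log_n T \sim \l_n$ for all $n \ge N$. Writing $T_k := \log_k \circ\, T \circ \exp_k$, I must show that $T_k$ is log-free once $k$ is large. The first ingredient is the commutation identity $\log(C\circ S) = (\log C)\circ S$ for $C,S \in \LP$, which holds because $(\,\cdot\,)\circ S$ is a field homomorphism fixing $\R$ with $e^{A}\circ S = e^{A\circ S}$, so that $\log$ too commutes with it on $\LP$; iterating gives $\log_i(C\circ S) = (\log_i C)\circ S$. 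Splitting $\log_k = \log_{M'}\circ\log_{k-M'}$ and applying this with $C=T$, $S=\exp_k$ gives, for $k \ge M'$,
\[
	T_k \;=\; \log_{M'}(B_k),\qquad\text{where}\quad B_k := (\log_{k-M'}T)\circ\exp_k .
\]

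The next step is two properties of $B_k$. First, $B_k$ is log-free: since $\log$ raises depth by at most one (as one checks from the canonical multiplicative decomposition and the defining formula for $\log$), $\log_{k-M'}T$ has depth at most $k$, i.e.\ $\log_{k-M'}T \in \T_{\bullet,k}$; composing on the right with $\exp_k$ then cancels all $k$ layers of logarithms, so $B_k \in \T_\bullet$. Second, $B_k \sim \exp_{M'}$: for $k \ge N+M'$ we have $\log_{k-M'}T \sim \l_{k-M'}$, and right composition preserves $\sim$ by Proposition~\ref{ineq1}, so $B_k \sim \l_{k-M'}\circ\exp_k = \exp_{M'}$.

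To finish, note that $B_k$ and the monomial $\exp_{M'}$ are both log-free, hence so is $B_k/\exp_{M'} = 1+\delta_k$ with $\delta_k$ small; thus $B_k = \exp_{M'}(1+\delta_k)$ is the canonical multiplicative decomposition of $B_k$ and $\delta_k$ is log-free. A short induction on $i$ then shows that $\log_i(B_k)$ is log-free and $\sim\exp_{M'-i}$ for $0\le i\le M'$: the step uses that $\log\bigl(\exp_m(1+\epsilon)\bigr) = \exp_{m-1} + \log(1+\epsilon)$, which is log-free and $\sim\exp_{m-1}$ whenever $\epsilon$ is log-free and small. Taking $i=M'$ gives that $T_k = \log_{M'}(B_k)$ is log-free, for every $k \ge N+M'$.

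I expect the crux to be conceptual rather than computational. On its own $\log_{k-M'}T$ still carries logarithms ($\log_n T$ is essentially never log-free), and what saves us is that $B_k$ is simultaneously log-free and has dominant monomial exactly $\exp_{M'}$: peeling the outer $M'$ logarithms off $B_k = \exp_{M'}(1+\delta_k)$ merely strips iterated exponentials and never produces a $\log x$. The equality $\dom B_k = \exp_{M'}$ is precisely where $\expo T = 0$ enters; the same argument shows more generally that $\log_k\circ T\circ\exp_k$ is log-free whenever $\expo T \ge 0$ (then $\dom B_k = \exp_{M'+\expo T}$), and it genuinely fails for $\expo T < 0$, e.g.\ $T = \log x$ gives $\log_k\circ T\circ\exp_k = \log x$. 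The remaining points — that $\log$ raises depth by at most one and that $\T_{\bullet,k}\circ\exp_k \subseteq \T_\bullet$ — are routine from the definitions in \cite{edgar}.
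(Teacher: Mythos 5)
Your proof is correct, and it is organized quite differently from the paper's. The paper normalizes to $T = x+A$ with $A \in \R\lbb\G_{\bullet,M}\rbb$, $A \fst x$ (the reduction of a general $T$ with $\expo T=0$ to this normalized form, via Proposition~\ref{exponentiality}, is left implicit there) and then runs a one-conjugation-at-a-time recursion: a single step gives $\log\circ\,T\circ\exp = x+\sum_{j\ge 1}(-1)^{j+1}B^j/j$ with $B=(A/x)\circ\exp$ of depth $M-1$, so the depth of the perturbation drops by one each time and is gone after $M$ conjugations. You instead split $\log_k=\log_{M'}\circ\log_{k-M'}$, push the inner block through the composition to write $T_k=\log_{M'}(B_k)$ with $B_k$ log-free and $B_k\sim\exp_{M'}$, and then peel the remaining $M'$ outer logarithms off a log-free series whose dominance is an iterated exponential. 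Both arguments ultimately rest on the same two facts --- right composition with $\exp$ strips one layer of logarithm, and $\log\bigl(\g(1+\epsilon)\bigr)=\log\g+\sum(-1)^{j+1}\epsilon^j/j$ stays log-free when $\g$ and $\epsilon$ are --- but your block decomposition makes the role of $\expo T=0$ explicit (it is exactly what pins $\dom B_k=\exp_{M'}$, so that peeling never produces a $\log x$), handles the general $T$ without a separate normalization step, and, as you observe, gives the statement for all $\expo T\ge 0$ at no extra cost. The paper's recursion is shorter once $T\sim x$ is granted, but that granting is precisely what it omits.
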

\begin{proof}
Prove recursively:
Assume $T = x+A$, $A \in \R\lbb\G_{\bullet,M}\rbb$, $M > 0$, $A \fst x$.
Then $T \circ \exp = e^x + A\circ \exp = e^x(1+B)$
with $B = (A/x)\circ \exp \in \R\lbb\G_{\bullet,M-1}\rbb$ and
$\log \circ\; T \circ \exp = x + \sum_{j=1}^\infty (-1)^{j+1}B^j/j$
has depth $M-1$.
\end{proof}

\subsection*{Simpler Proof Needed}
Here is a simple fact.  It needs a simple proof.  It is true
for functions, so it is surely true for transseries
as well.  My overly-involved proof will be given
in Section~\ref{involvedproof}.
In fact, there are two propositions.
Each can be deduced from the other:

\begin{pr}\label{posderiv}
Let $T \in \T$, $S_1,S_2 \in \P$, $S_1 < S_2$.  Then
\begin{align*}
   T' > 0 \Longrightarrow{}& T\circ S_1 < T \circ S_2,
\\ T' = 0 \Longrightarrow{}& T\circ S_1 = T \circ S_2,
\tag{1}
\\ T' < 0 \Longrightarrow{}& T\circ S_1 > T\circ S_2 .
\end{align*}
\end{pr}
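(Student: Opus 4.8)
The plan is to prove this proposition by ``induction on the height of $T$'', applying an inductive principle to the left composand. Let $\RR\subseteq\T$ be the set of all $T$ for which the three implications (1) hold for \emph{every} pair $S_1<S_2$ in $\P$; I will verify conditions (d$'$), (e$'$), (f$'$) of Corollary~\ref{inductivecor} for $\RR$, which forces $\RR=\T$. The middle implication is immediate for every $T$: $T'=0$ means $T\in\R$, and then $T\circ S_1=T=T\circ S_2$. Morally the whole statement is clear ``for functions'' (an increasing $T$ composed with $S_1<S_2\to+\infty$ gives $T\circ S_1<T\circ S_2$), but I do not want to route through a germ interpretation, so I argue formally. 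Throughout I use Propositions~\ref{ineq1} and~\ref{ineq}, together with the facts that a purely large positive transseries has positive derivative, and the usual control of the dominant term of a derivative $A'$ (and of its sign) in terms of $A$ (see \cite[\Ederivexist,\ \EWKB]{edgar}).

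Condition (f$'$) is handled by conjugation: for a monomial $\g\in\RR$ write $\g\circ\log\circ S_i=\g\circ R_i\circ\log$, where $R_i:=\log\circ S_i\circ\exp\in\P$; from $S_1<S_2$ one gets $R_1<R_2$ (compose on the right by $\exp$, then apply Proposition~\ref{ineq}(c)), and $(\g\circ\log)'=(\g'\circ\log)/x$ has the same sign as $\g'$ by Proposition~\ref{ineq1}. Hence the implications for $\g\circ\log$ follow by composing on the right by $\log$ the ones already known for $\g$. For condition (e$'$), given $b\in\R$ and a purely large log-free $L\in\RR$, write $(x^be^L)\circ S_i=S_i^{\,b}\,e^{L\circ S_i}$ and compare the two factors using Proposition~\ref{ineq}(a),(b) for the powers and Proposition~\ref{ineq}(d) for the exponentials. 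When $L\neq0$ one has $L'\fgt 1/x$, so $(x^be^L)'=x^{b-1}(b+xL')e^L$ has the sign of the leading coefficient of $L$, which is also the sign controlling $L\circ S_1$ versus $L\circ S_2$; if this agrees with the sign of $b$ the product comparison is immediate, and if the two conflict the exponential factor wins, because $L\neq0$ makes $e^{L\circ S_2-L\circ S_1}$ overwhelm (or be overwhelmed by) the power $(S_2/S_1)^{-b}$ in the required direction. When $L=0$ only the power comparison remains and (e$'$) is immediate.

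Condition (d$'$) is the heart of the matter. Assuming every $\g\in\supp T$ lies in $\RR$, write $T=\sum_\g c_\g\g$, so that $T'=\sum_\g c_\g\g'$ and $T\circ S_2-T\circ S_1=\sum_\g c_\g(\g\circ S_2-\g\circ S_1)$. For each $\g$, membership $\g\in\RR$ says $c_\g(\g\circ S_2-\g\circ S_1)$ has the sign of $c_\g$ times the sign of $\g'$. What must be shown is that the sign of the whole sum is governed by the term coming from the largest non-constant monomial $\g_0$ of $T$ (passing to the small part of $T$ if $\dom T=1$), since for that monomial $c_{\g_0}$ times the sign of $\g_0'$ equals the sign of $\dom(T')$, hence of $T'$.

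The main obstacle is exactly this domination claim, together with its analogue inside (e$'$): one must rule out catastrophic cancellation, i.e.\ show that for non-constant monomials $\g\fgt\n$ and any $S_1<S_2$ in $\P$, the difference $\g\circ S_2-\g\circ S_1$ is still far larger than $\n\circ S_2-\n\circ S_1$. I would prove this by splitting on the size of $D:=S_2-S_1>0$: when $D$ is small enough relative to $S_1$, the Taylor series $T\circ S_2-T\circ S_1=\sum_{n\ge1}(T^{(n)}\circ S_1)D^n/n!$ converges and is point-finite (in the spirit of \cite[\Edominateprop]{edgar}), with leading term $(T'\circ S_1)D$, whose sign is that of $T'$; the remaining regime, where $S_2$ is genuinely ``much larger'' than $S_1$, has to be pushed back onto the exponential building block, where the two sides separate through the exponential factor. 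Turning these magnitude comparisons into airtight estimates, without the germ interpretation, is what makes the honest proof long.
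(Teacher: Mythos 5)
Your skeleton is the right one, and it matches the paper's reduction: the middle implication is trivial, the inductive-principle framing is Corollary~\ref{inductivecor} (the paper uses Corollary~\ref{inductivelog}, which lets it take the exponential step as $e^L$ with $L$ purely large, avoiding your $x^b$-versus-$e^L$ sign-conflict case entirely), and you correctly identify that everything hinges on the no-cancellation statement: for monomials $\g \fgt \n$ (not both constant) and any $S_1 < S_2$ in $\P$, $\n\circ S_2 - \n\circ S_1 \fst \g\circ S_2 - \g\circ S_1$. That statement is exactly Proposition~\ref{derivcompare} (property $\DD$ of Section~\ref{involvedproof}), and your argument for condition (d$'$) is essentially the paper's ``Proof of \ref{posderiv} from \ref{derivcompare}.'' The problem is that you never prove it. You sketch a two-case split on the size of $S_2-S_1$ and then write that turning it into airtight estimates ``is what makes the honest proof long''---but that is precisely the content of the proposition, and the paper's Remark after Proposition~\ref{mvt1} warns that proving either \ref{posderiv} or \ref{derivcompare} outright requires the work of Section~\ref{involvedproof}. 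As it stands your proof is circular in the same way the paper's two mutual deductions are: each of $\CC$ and $\DD$ is derived assuming the other.

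Concretely, a single induction on $\CC$ alone cannot close. The paper's actual proof (Proposition~\ref{ppi}) runs a \emph{simultaneous} induction on the height $N$, carrying both properties: Lemma~\ref{ppc} gets $\CC$ for $\T_\AA$ from $\CC$ \emph{and} $\DD$ for $\AA$ (your step (d$'$)); Lemma~\ref{ppg} gets $\DD$ for $\G_{N+1}$ from $\CC$ and $\DD$ for $\G_N$, and this is where the hard analysis lives. Your ``small $D$'' regime is the paper's Case~2 of Lemma~\ref{ppp}, handled by the Taylor estimate of Proposition~\ref{tterm} under the hypothesis $S_2-S_1 \fst \G_N\circ S_1$ (note the first-order Taylor statement suffices; no point-finiteness of the full series is needed). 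Your ``remaining regime'' is Case~1, and ``the exponential factor wins'' is not enough there: the paper must insert an intermediate point $S_1 + V$ with $V = (xe^L/B')\circ S_1$, apply $\CC$ at the lower height, and then in Lemma~\ref{ppg} control the ratio $\m = \fa/\fb$ via the estimate $\fb(S_1)\bigl(\m(S_2)-\m(S_1)\bigr)/\bigl(\fb(S_2)-\fb(S_1)\bigr) \fst 1$, which itself uses Proposition~\ref{explog2}. None of this machinery appears in your proposal, and the same gap infects your (e$'$) step, where ``the exponential factor overwhelms the power'' is again a $\DD$-type comparison of $L(S_2)-L(S_1)$ against $\log S_2 - \log S_1$. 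To repair the proof, restructure it as an induction that establishes $\CC$ and $\DD$ together at each height, and supply the estimates of Lemmas~\ref{ppp} and~\ref{ppg}.
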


\begin{pr}\label{derivcompare}
Let $A, B \in \T$, $S_1,S_2 \in \P$, $A' \fst B'$,
$S_1 < S_2$.  Then
\begin{equation*}
	A \circ S_2 - A \circ S_1 \fst 
	B \circ S_2 - B \circ S_1 .
\tag{2}
\end{equation*}
\end{pr}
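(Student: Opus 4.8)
The plan is to deduce Proposition~\ref{derivcompare} from Proposition~\ref{posderiv}; this is the ``easy direction'' of the stated equivalence, and the argument is short once one unwinds the meaning of $\fst$ from Remark~\ref{relationship}. The only serious content is Proposition~\ref{posderiv} itself, which I take as already available.

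First I would reduce to the case $B'>0$. By Remark~\ref{relationship} the hypothesis $A'\fst B'$ means $|A'|<k|B'|$ for every real $k>0$; in particular $B'\ne 0$ (otherwise we would need $|A'|<0$), so either $B'>0$ or $B'<0$ throughout. Replacing $B$ by $-B$ affects neither $A'\fst B'$ nor the truth of~(2), since $\fst$ ignores overall sign, so I may assume $B'>0$. Then Proposition~\ref{posderiv} with $T=B$ gives $B\circ S_1<B\circ S_2$, so $E:=B\circ S_2-B\circ S_1>0$.

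Next, fix an arbitrary real $k>0$ and consider $T_k^{\pm}=kB\pm A\in\T$, with $(T_k^{\pm})'=kB'\pm A'$. Taking the constant $k$ in the displayed characterization above gives $|A'|<kB'$, so both $kB'+A'>0$ and $kB'-A'>0$; applying Proposition~\ref{posderiv} to $T=T_k^{+}$ and to $T=T_k^{-}$ and using that left composition is $\R$-linear, the two resulting inequalities rearrange to $\pm(A\circ S_2-A\circ S_1)<k(B\circ S_2-B\circ S_1)=kE$. Hence $|A\circ S_2-A\circ S_1|<kE=k|E|$ for every real $k>0$, which by Remark~\ref{relationship} (and $E>0$) is precisely $A\circ S_2-A\circ S_1\fst B\circ S_2-B\circ S_1$, i.e.~(2).

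I expect no real obstacle in this deduction itself; the difficulty is entirely pushed into Proposition~\ref{posderiv}, for which transseries admit no literal mean value theorem, and where an induction on the height of $T$ must cope with the fact that $T'>0$ is not inherited by the subterms of $T$ and is not obviously preserved on passing from a purely large $L$ to $e^L$ or from $T$ to $T\circ\log$. That is presumably why the author's more involved argument is postponed to Section~\ref{involvedproof}.
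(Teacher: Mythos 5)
Your proposal is correct and is essentially the paper's own deduction of Proposition~\ref{derivcompare} from Proposition~\ref{posderiv}: the paper applies Proposition~\ref{posderiv} to $B-cA$ for every real $c$ (using $A'\fst B'$ to get $(B-cA)'>0$), which is just a reparametrization of your application to $kB\pm A$ for every $k>0$. The reduction to $B'>0$ and the final appeal to Remark~\ref{relationship} are likewise identical.
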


\begin{proof}[Proof of \ref{derivcompare} from \ref{posderiv}]
Since the theorem is unchanged
when we replace $B$ by $-B$, we may assume $B' > 0$.
We have $A' \fst B'$.  Let $c \in \R$.  By
Remark~\ref{relationship}, $B' > c A'$ so
$(B - cA)'> 0$.  Therefore, by Proposition~\ref{posderiv},
$(B - cA)\circ S_1 < (B - cA)\circ S_2$ so
$$
	B\circ S_2 - B\circ S_1 >
	c\big(A\circ S_2 - A\circ S_1\big) .
$$
This is true for all $c \in \R$, so we have
$B\circ S_2 - B\circ S_1
\fgt A\circ S_2 - A\circ S_1$.
\end{proof}
\begin{proof}[Proof of \ref{posderiv} from \ref{derivcompare}]
Let $\RR$ be the set of all $T \in \T$ that
satisfy (1) for all $S_1,S_2\in \P$ with $S_1<S_2$.
We claim $\RR$ satisfies the conditions of
Corollary~\ref{inductivelog}.
Clearly $1, x \in \RR$.

(b$''$) Note $\l_m' = 1/\prod_{j=0}^{m-1} \l_j > 0$.
If $S_1 < S_2$, then by Proposition \ref{ineq}(c)
we have $\log_m S_1 < \log_m S_2$.

(d$''$) Assume $\supp T \subseteq \RR$.  If $T=0$, the conclusion
is clear.  Assume $T \ne 0$.  Let $a\g = \dom T$, $a \in \R$,
$\g \in \G$.  We may assume $\g \ne 1$, since
if $\g=1$, we may consider $T - a\g$ instead.  So
$T' \sim a \g'$.  Write $A = T-a\g$ so that
$T = a\g + A$ with $A \fst a\g$.
There will be cases based on the signs of $a$ and $\g'$.
Take the case $a>0, \g'>0$.
So $\g\circ S_1 < \g\circ S_2$ since $\g \in \RR$.
Now by Proposition~\ref{derivcompare},
$$
	a \g\circ S_2 - a\g\circ S_1 \fgt
	A\circ S_2 - A\circ S_1 ,
$$
so $T\circ S_2 - T\circ S_1 \sim a \g\circ S_2 - a\g\circ S_1 > 0$
and therefore $T\circ S_2 - T\circ S_1 > 0$.
The other three cases are similar.

(e$''$)  Let $T =e^L$, where
$L \in \RR$ is purely large.
Then $T' = L' e^L$, so $T'$ has the same
sign as $L'$.  Thus $L \circ S_1 < L \circ S_2$
if $T'>0$ and reversed if $T'<0$.  Apply Proposition \ref{ineq}(d) to get
$e^{L\circ S_1} < e^{L\circ S_2}$ or reversed, as required.
\end{proof}

\begin{re}
To prove either \ref{derivcompare} or \ref{posderiv} outright seems
to require more work than the proofs found above.  See
Theorem~\ref{posderivthm}.
\end{re}

Here is a special case of Proposition~\ref{derivcompare}.

\begin{pr}\label{mvt1}
If $A \in \T, S_1, S_2 \in \P$, $S_1 < S_2$, and $A \fst x$, then
\hfill\break
$A \circ S_2 - A \circ S_1 \fst S_2 - S_1$.
\end{pr}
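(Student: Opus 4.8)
The plan is to read Proposition~\ref{mvt1} off Proposition~\ref{derivcompare}, as the phrasing suggests, by taking $B := x$. Then $B' = 1$, so $B \circ S_2 - B\circ S_1 = S_2 - S_1$, and Proposition~\ref{derivcompare}, applied to this $A$ and $B$ with the given $S_1 < S_2$, yields at once
$$
	A\circ S_2 - A\circ S_1 \;\fst\; B\circ S_2 - B\circ S_1 \;=\; S_2 - S_1 ,
$$
which is exactly the assertion. The one thing that must be checked before invoking Proposition~\ref{derivcompare} is its hypothesis in this instance, namely $A' \fst B' = x' = 1$.

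So the whole proof reduces to the implication $A \fst x \Longrightarrow A' \fst 1$. I would deduce this from the fact that the derivation on $\T$ is compatible with the dominance relation, in the form: if $A' \fgteq 1$ then $A \fgteq x$ (integration does not decrease growth). Indeed, since $\T$ is a valued field, either $A' \fst 1$ — which is the desired conclusion — or $A' \fgteq 1$; in the latter case the quoted fact gives $A \fgteq x$, contradicting $A \fst x$. This ``integration'' statement is l'Hôpital's rule in disguise and should already be available from \cite{edgar} (it is essentially the content near \cite[\Eintegral]{edgar}); if one must prove it directly, one antidifferentiates $\mag(A')$, which is a monomial $\fgteq 1$, and checks that its antiderivative is $\fgteq x$, the only mildly delicate point being the sub-case $\mag(A') = 1$ (giving $A \fe x$) versus $\mag(A') \fgt 1$ (giving $A \fgt x$). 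The degenerate case $A = 0$ is trivial — then $A\circ S_2 - A\circ S_1 = 0 \fst S_2 - S_1$ because $S_1 < S_2$ forces $S_2 - S_1 \ne 0$ — and is in any case already covered since $0' = 0 \fst 1$.

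The substantive content — and the only obstacle — is precisely the step $A \fst x \Rightarrow A' \fst 1$. If it is cited as a known property of the derivation, the proof is two lines; if it must be established from scratch, the real work is in the compatibility of integration with the relations $\fst$, $\fe$, $\fgt$ (equivalently: a transseries whose derivative is at least of order $1$ cannot itself be smaller than $x$). Everything else — substituting $B = x$, simplifying $x\circ S_i = S_i$, and reading off the conclusion from Proposition~\ref{derivcompare} — is immediate.
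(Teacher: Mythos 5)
Your proposal is correct and is exactly the paper's proof: the paper's entire argument reads ``Note $A' \fst x'$ and apply Proposition~\ref{derivcompare}'', i.e.\ the specialization $B = x$ that you carry out. The only difference is that you supply the justification for the step $A \fst x \Rightarrow A' \fst 1$ (compatibility of the derivation with $\fst$, l'H\^opital-style), which the paper takes as known from \cite{edgar}.
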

\begin{proof}
Note $A' \fst x'$
and apply Proposition~\ref{derivcompare}.
\end{proof}

\subsection*{Grid-Based Version}
As we know, $T \fst S$ if and only if $T \fst^\ebmu S$ for some
finite set $\bmu \subset \Gsmall$ of generators.  So of course
Proposition~\ref{mvt1} needs
a form in terms of ratio sets.  It is found in
\cite[\Wfuppermonoc]{edgarw}:

\begin{pr}\label{fuppermonoc}
Let $\bmu$ be a ratio set.  Let $S_1, S_2 \in \P$.
Then there is a ratio set $\ba$ such that:
For every $A \in \T^\ebmu$, if $A \fst^\ebmu x$,
then $A(S_2) - A(S_1) \fst^\ba S_2-S_1$.
\end{pr}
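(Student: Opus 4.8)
The plan is to reduce the uniform (ratio-set) statement to the non-uniform Proposition~\ref{mvt1} by means of two ``grid bookkeeping'' facts: that composition with a fixed $S\in\P$ carries a grid-based class into a grid-based class whose ratio set depends only on $\bmu$ and $S$, and that inside a fixed grid-based class the relation $\fst D$ is automatically witnessed by a ratio set depending only on the class and on $D$. We may assume $S_1<S_2$ (interchange $S_1,S_2$ if necessary; if $S_1=S_2$ there is nothing of substance to prove).

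First I would identify the relevant family of left composands. The hypothesis $A\fst^\ebmu x$ says exactly that $\supp A\subseteq x\cdot\bmu^*\setminus\{x\}$, a fixed grid with base $x$ and ratio set $\bmu$. By the grid-based composition theorem \cite[\Ecompexist]{edgar} in its ratio-set form, there is a ratio set $\bb$, depending only on $\bmu$ and on $S_1,S_2$, with $A\circ S_1,\ A\circ S_2\in\T^\bb$ for every such $A$. Concretely one expands
$$
	(x\,\m_1^{e_1}\cdots\m_k^{e_k})\circ S_j
	= S_j\prod_i(\m_i\circ S_j)^{e_i},
$$
observes that each $\m_i\circ S_j$ is small (Proposition~\ref{ineq1}) and grid-based, and checks that the ratio sets produced by the finite binomial expansions of $(\m_i\circ S_j)^{e_i}$ do not grow with $e_i$, because the monoid generated by $\supp(\m_i\circ S_j)$ is finitely generated. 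Since $\T^\bb$ is closed under differences, $A\circ S_2-A\circ S_1\in\T^\bb$ as well.

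Now $A\fst^\ebmu x$ implies $A\fst x$, so Proposition~\ref{mvt1} gives $A\circ S_2-A\circ S_1\fst S_2-S_1$ for each individual $A$ --- but with no control over the witnessing ratio set. This is supplied by the following lemma, which I would prove next: for any ratio set $\bb$ and any nonzero $D\in\T$ there is a ratio set $\ba$ such that $C\in\T^\bb$ and $C\fst D$ together force $C\fst^\ba D$. Dividing by $\mag D$ (and absorbing the unit $D/\mag D=\pm1+(\text{small})$, which is grid-based) reduces this to: the set of monomials of a fixed finitely generated grid that are $\fst 1$ lies in a fixed finitely generated grid of small monomials. That holds because the frontier of minimal such monomials is finite (a Dickson/finite-basis argument); compare \cite[\Eaddednumsmall]{edgar}. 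Applying the lemma with $C=A\circ S_2-A\circ S_1$ and $D=S_2-S_1$ produces a ratio set $\ba$ depending only on $\bb$ and $S_2-S_1$, hence only on $\bmu,S_1,S_2$, which finishes the proof.

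I expect the real obstacle to be the uniformity in the first step: confirming that the ratio set $\bb$ coming out of composition genuinely does not depend on $A$. This is the substance of the grid-based composition theorem and must be invoked in precisely the right form, and the monoid-stabilization bookkeeping around the binomial expansions is the fiddly part. By contrast the third-step lemma is routine once recognized as a finite-generation statement, and Proposition~\ref{mvt1} carries all of the genuine analytic content for free.
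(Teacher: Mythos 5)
The paper itself offers no proof of Proposition~\ref{fuppermonoc} --- it defers to \cite[\Wfuppermonoc]{edgarw} --- so there is nothing in-house to compare your outline against; your two-step architecture (a uniform grid for $A\circ S_1$ and $A\circ S_2$ via grid-based composition, then a uniform witnessing lemma applied to Proposition~\ref{mvt1}) is the natural one. However, the auxiliary lemma in your third step is false as stated. If $C\in\T^{\bb}$ means $\supp C\subseteq\GRID^{\bb,\bm}$ for \emph{some} base point $\bm$, then no single $\ba$ can work: take $\bb=\{x^{-1},x^{-\sqrt2}\}$ and $D=1$. Every $C=x^{-j-k\sqrt2}$ with $j,k\in\Z$ and $j+k\sqrt2>0$ lies in $\T^{\bb}$ (for a suitable, $C$-dependent, base point) and satisfies $C\fst 1$, yet these monomials come arbitrarily close to $1$; on the other hand $C\fst^{\ba}1$ forces $C$ to have at least one factor from $\ba$, hence $C\fsteq\max\ba\fst 1$ with a definite gap. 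The Dickson/finite-basis argument you invoke is sound only for a grid with a \emph{fixed} base point: upward-closed subsets of $\SET{\bk}{\bk\ge\bm}$ have finitely many minimal elements, but as $\bm$ is pushed down the minimal elements proliferate and the corresponding small monomials creep up toward $\mag D$.

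The repair is available, but you must make it explicit, because the base point is exactly where the uniformity you are trying to establish can silently leak away. The hypothesis $A\fst^{\ebmu}x$ confines $\supp A$ to the single grid $x\cdot\GRID^{\ebmu,\0}$ with base point $\0$ fixed once and for all, and you must invoke the composition theorem \cite[\Ecompexist]{edgar} in the form: there exist $\bb$ \emph{and a fixed base point} $\bm'$, depending only on $\bmu,S_1,S_2$, with $A\circ S_1,\,A\circ S_2\in\T^{\bb,\bm'}$ for all such $A$. Then state and prove the witnessing lemma for $\T^{\bb,\bm'}$ with $\bm'$ fixed. Two further bookkeeping points there: the minimal elements produced by Dickson's lemma give monomials $\g_0\fst\mag D$ of the grid, and what must be adjoined to $\ba$ are the small monomials $\g_0/\mag D$ (a witness for $\fst^{\ba}D$ is a factorization of $\g/\mag D$, and $\mag D$ need not lie in the group generated by $\bb$); and upward-closedness of $\SET{\bk\ge\bm'}{\mu_1^{k_1}\cdots\mu_n^{k_n}\fst\mag D}$ should be noted (it holds because the $\mu_i$ are small). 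With those adjustments your outline goes through, and Proposition~\ref{mvt1} does indeed carry all the analytic content.
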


Note that $\ba$ depends on $S_1$ and $S_2$, not
just on a ratio set generating them.
It is apparently not possible to avoid this problem:

\begin{qu}
Given a ratio set $\bmu \subset \Gsmall$,
is there $\ba \supseteq \bmu$ such that:
if $A,S_1,S_2 \in \T^{\ebmu}$, $A \fst^{\ebmu} x$,
$S_1,S_2 \in \P$, and $S_1 < S_2$, then
$A \circ S_2 - A \circ S_1 \fst^{\ba} S_2 - S_1$?
\end{qu}

\begin{ex}
Let $\bmu = \{x^{-1},e^{-x^3}\}$.  Consider
$A = \mu_2 = e^{-x^3}$ and
$S_a = \mu_1^{-1}+a\mu_1 = x+a x^{-1}$
for $a \in \R$.  Certainly $A \fst^\ebmu 1$.
Compute
\begin{align*}
	A \circ S_a
	&= e^{-(x+a x^{-1})^3}
	=
	e^{-x^3 -3 a x - 3 a^2 x^{-1} - a^3 x^{-3}}
	\\ &=
	e^{-x^3 -3 a x }\left(\sum_{j=0}^\infty
	\frac{(- 3 a^2 x^{-1} - a^3 x^{-3})^j}{j!}\right) .
\end{align*}
The dominant term is the monomial $e^{-x^3 -3 a x }$.
As $a$ ranges over $\R$, these monomials do not lie in any
grid.  Nor even in any well ordered set.

Now if $a<b$, then $S_a < S_b$ and
$e^{-x^3 -3 a x } \fgt e^{-x^3 -3 b x }$,
so
$$
	S_b - S_a = (b-a)x^{-1},\qquad
	A \circ S_b - A \circ S_a \sim -e^{-x^3 -3 a x } .
$$
Of course $A \circ S_b - A \circ S_a \fst S_b - S_a$.
But there is no finite $\ba$ such that
\hfill\break
$A \circ S_b - A \circ S_a \fst^{\ba} S_b - S_a $
for all $a,b$ ranging over the reals.
\end{ex}

\subsection*{Integral Notation}
\begin{no}
If $A, B \in \T$ and $A' = B$, we may
sometimes write $A = \int B$, but in fact $A$
is only determined by $B$ up to a constant summand.
The large part of $A$ is determined by $B$.
We also write $\int_{S_1}^{S_2} B := A(S_2)-A(S_1)$,
which is uniquely determined by $B$, and is defined for
$S_1, S_2 \in \P, S_1 < S_2$.
\end{no}

Of course, with this definition, any statement about integrals
is equivalent to a statement about derivatives.
Propositions~\ref{posderiv} or \ref{derivcompare}
lead to the following.

\begin{co}
Let $A, B \in \T$,
$S_1,S_2 \in \P$, $S_1 < S_2$.  Then
{
\allowdisplaybreaks
\begin{align*}
   B>0 \Longrightarrow{}& \int_{S_1}^{S_2} B > 0,
\\ B=0 \Longrightarrow{}& \int_{S_1}^{S_2} B = 0,
\\ B<0 \Longrightarrow{}& \int_{S_1}^{S_2} B < 0 .
\\ A>B \Longrightarrow{}&
\int_{S_1}^{S_2} A > \int_{S_1}^{S_2} B,
\\ A=B \Longrightarrow{}&
\int_{S_1}^{S_2} A = \int_{S_1}^{S_2} B,
\\ A<B \Longrightarrow{}&
\int_{S_1}^{S_2} A < \int_{S_1}^{S_2} B.
\end{align*}
}
\end{co}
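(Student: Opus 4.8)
The plan is to reduce everything to Proposition~\ref{posderiv} together with the linearity of the integral notation. First I would recall that by definition $\int_{S_1}^{S_2} B = F(S_2) - F(S_1) = F\circ S_2 - F\circ S_1$ for any antiderivative $F$ of $B$; such an $F \in \T$ exists by \cite[\Eintegral]{edgar}, and the value $F\circ S_2 - F\circ S_1$ is independent of the choice of $F$ since two antiderivatives differ by a constant summand, which cancels in the subtraction. This is exactly the situation of Proposition~\ref{posderiv} applied to $T = F$: the hypotheses there are on the sign of $T' = B$, and the conclusions are the orderings between $T\circ S_1$ and $T\circ S_2$. Thus the three implications $B>0 \Rightarrow \int_{S_1}^{S_2} B > 0$, $B=0 \Rightarrow \int_{S_1}^{S_2} B = 0$, $B<0 \Rightarrow \int_{S_1}^{S_2} B < 0$ are immediate restatements of the three lines of Proposition~\ref{posderiv}.

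For the second triple I would first record linearity of the integral. If $F' = A$ and $G' = B$, then $(F-G)' = A - B$; and because right composition with a fixed $S \in \P$ is a ring homomorphism, so that $(F-G)\circ S = F\circ S - G\circ S$ (see \cite{edgar}), we get $\int_{S_1}^{S_2}(A-B) = (F-G)\circ S_2 - (F-G)\circ S_1 = \int_{S_1}^{S_2} A - \int_{S_1}^{S_2} B$. The implications comparing $A$ with $B$ then follow by applying the first triple to the transseries $A - B$: for instance $A > B$ is equivalent to $A - B > 0$, which gives $\int_{S_1}^{S_2}(A-B) > 0$, hence $\int_{S_1}^{S_2} A > \int_{S_1}^{S_2} B$; the cases $A = B$ and $A < B$ are handled in the same way.

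I do not expect a genuine obstacle here. The only facts that must be in place are the existence of an antiderivative in $\T$ for every $B \in \T$ (so that the notation $\int$ is meaningful), which is \cite[\Eintegral]{edgar}, and the additivity of right composition, both of which are available. The mathematical content of the corollary is entirely contained in Proposition~\ref{posderiv} (equivalently Proposition~\ref{derivcompare}); the corollary is merely its translation into integral language, as the sentence preceding the statement already signals.
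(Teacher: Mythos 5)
Your proof is correct and follows exactly the route the paper intends: the paper gives no separate argument, merely noting that the corollary "leads from" Proposition~\ref{posderiv}, and your reduction via an antiderivative $F$ (well-defined by \cite[\Eintegral]{edgar} and independence of the constant) plus linearity of $\int_{S_1}^{S_2}$ is precisely that translation.
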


Remark~\ref{relationship} lets us
prove formulas about $\fst$ from formulas about $<$.
Here are some examples.

\begin{pr}
If $A,B \in \T$, $A,B$ nonzero,
$S_1,S_2 \in \P$, $S_1 < S_2$, then
\begin{align*}
A \fgt B \Longrightarrow{}&
\int_{S_1}^{S_2} A \fgt \int_{S_1}^{S_2} B,
\\ A \fst B \Longrightarrow{}&
\int_{S_1}^{S_2} A \fst \int_{S_1}^{S_2} B,
\\ A\fe B \Longrightarrow{}&
\int_{S_1}^{S_2} A \fe \int_{S_1}^{S_2} B,
\\ A\sim B \Longrightarrow{}&
\int_{S_1}^{S_2} A \sim \int_{S_1}^{S_2} B.
\end{align*}
\end{pr}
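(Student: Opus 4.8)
The plan is to deduce each line from the corresponding ``$<$'' statement via Remark~\ref{relationship}, pushing the inequalities through the integral via the order-preservation recorded in the preceding Corollary. The only extra ingredient needed is that $\int_{S_1}^{S_2}$ is homogeneous over $\R$: if $A' = B$ and $k \in \R$, then $(kA)' = kB$, and since composition with a fixed $S$ on the right is a ring homomorphism fixing constants, $\int_{S_1}^{S_2}(kB) = (kA)(S_2) - (kA)(S_1) = k\int_{S_1}^{S_2}B$. Thus scaling an integrand by a real number scales its integral by the same number, and, by the Corollary, strict order between integrands passes to strict order between the integrals.

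First I would reduce to positive integrands. Each implication is unchanged if we replace $A$ by $-A$ or $B$ by $-B$: the relations $\fgt,\fst,\fe,\sim$ are insensitive to these sign changes, and on the conclusion side $\int_{S_1}^{S_2}(-A) = -\int_{S_1}^{S_2}A$ by homogeneity. Moreover $A \fe B$ and $A \sim B$ each already force $A$ and $B$ to have the same sign, since $\T$ is an ordered field and $A/B$ is then caught between two positive reals. So it suffices to treat $A > 0$, $B > 0$; then $|A| = A$, $|B| = B$, and the Corollary also gives $\int_{S_1}^{S_2}A > 0$ and $\int_{S_1}^{S_2}B > 0$.

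Now the four cases. For $A \fst B$: by Remark~\ref{relationship}, $A < kB$ for every real $k > 0$; applying the Corollary to the integrands $A$ and $kB$ and using homogeneity yields $\int_{S_1}^{S_2}A < k\int_{S_1}^{S_2}B$ for every $k > 0$, which (both integrals being positive) is $\int_{S_1}^{S_2}A \fst \int_{S_1}^{S_2}B$ by Remark~\ref{relationship} again. The case $A \fgt B$ is this same statement with the roles of $A$ and $B$ interchanged. For $A \fe B$: Remark~\ref{relationship} gives a real $k > 1$ with $B/k < A < kB$; integrating and rescaling gives $\frac{1}{k}\int_{S_1}^{S_2}B < \int_{S_1}^{S_2}A < k\int_{S_1}^{S_2}B$, hence $\int_{S_1}^{S_2}A \fe \int_{S_1}^{S_2}B$. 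The case $A \sim B$ is the identical chain of inequalities with ``for some $k > 1$'' replaced by ``for all $k > 1$'' throughout. I do not anticipate a genuine obstacle: the whole argument is bookkeeping on top of the Corollary and Remark~\ref{relationship}. The only points needing attention are keeping the inequalities \emph{strict} as they pass through the integral --- which is exactly what the Corollary supplies --- and checking that the sign reduction is legitimate in the $\fe$ and $\sim$ lines, where one must first observe that the hypothesis already forces $A$ and $B$ to share a sign.
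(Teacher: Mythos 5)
Your argument is correct and is exactly the route the paper intends: the paper omits the proof, noting only that Remark~\ref{relationship} converts these $\fst$/$\fe$/$\sim$ statements into families of strict inequalities that the preceding Corollary pushes through the integral, which is precisely your bookkeeping with the scaling $\int_{S_1}^{S_2}(kB)=k\int_{S_1}^{S_2}B$. (One small imprecision: $A\fe B$ does \emph{not} force $A$ and $B$ to share a sign, since $\fe$ is defined via absolute values --- but for the same reason the sign reduction in that case is harmless anyway.)
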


\subsection*{Compositional Inverse}
Now using Proposition~\ref{mvt1}
we get a nice proof for the existence of inverses under
composition.  (For the well-based case.)
See also \cite[Cor.~6.25]{DMM}.

\begin{pr}\label{inverse1}
Let $T = x+A$, $A \fst x$, $\supp A \subset \G_N$.  Then $T$
has an inverse $S$ under composition, $S = x+B$, $B \fst x$,
$\supp B \subset \G_N$.
\end{pr}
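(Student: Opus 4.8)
The plan is to solve $T\circ S = x$ by a fixed‑point argument for the ``correction term''. Write the prospective inverse as $S = x+B$; then $T\circ S = x$ becomes $(x+B)+A\circ(x+B) = x$, i.e. $B = -A\circ(x+B)$. So I set
$$
	\Phi(B) := -A\circ(x+B),
$$
and look for a fixed point of $\Phi$ on $\mathfrak X := \SET{B\in\T_N}{B\fst x}$, where $\T_N$ is the Hahn field $\R[[\G_N]]$ (in the well‑based case this is exactly $\R[[\WW_N]]$). First I would check that $\Phi$ is a well‑defined self‑map of $\mathfrak X$: if $B\fst x$ then $x+B\sim x$ is large positive, so $A\circ(x+B)$ is defined; writing $A = x\cdot(A/x)$ and using $A/x\fst 1$ together with Proposition~\ref{ineq1} ($U\fst 1\Rightarrow U\circ S\fst 1$), we get $A\circ(x+B) = (x+B)\cdot\big((A/x)\circ(x+B)\big)\fst x$, so $\Phi(B)\fst x$. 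The one nontrivial point is that $\Phi(B)$ again has support in $\G_N$: composing a transseries supported in $\G_N$ with $x$ plus something small and supported in $\G_N$ stays supported in $\G_N$. This is where the basic composition theory of \cite{edgar} enters (existence and level control for composition, \cite[\Ecompexist\ \& \EcomponN]{edgar}); concretely it amounts to the fact that $\g\circ(x+B) = \sum_n \g^{(n)}(x)B^n/n!$ has each summand supported in $\G_N$ and that the family is summable there.

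The heart of the argument is a contraction estimate supplied by Proposition~\ref{mvt1}. Let $B\ne B'$ in $\mathfrak X$. Since $\T$ is totally ordered we may assume $x+B < x+B'$, and then, as $A\fst x$, Proposition~\ref{mvt1} gives
$$
	\Phi(B)-\Phi(B') \;=\; A\circ(x+B') - A\circ(x+B) \;\fst\; (x+B')-(x+B) \;=\; B'-B .
$$
Hence $\mag\big(\Phi(B)-\Phi(B')\big)\fst\mag(B-B')$: the map $\Phi$ strictly decreases the natural ultrametric ``distance'' $\mag(B-B')$ on $\mathfrak X$. Now $\T_N = \R[[\G_N]]$ is a Hahn series field over a totally ordered abelian group, hence spherically complete, and the open ball $\mathfrak X$ inherits spherical completeness. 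By the fixed‑point theorem for strictly contracting self‑maps of a (nonempty) spherically complete ultrametric space, $\Phi$ has a unique fixed point $B\in\mathfrak X$. Concretely this fixed point is the ``limit'' of the iterates $S_0 = x$, $S_{k+1} = x - A\circ S_k$, whose coefficients stabilise monomial by monomial because $\mag(S_{k+1}-S_k)$ strictly decreases at each step.

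Put $S = x+B$. Then $B\fst x$ and $\supp B\subseteq\G_N$, and from $B = \Phi(B) = -A\circ(x+B)$ we get $T\circ S = (x+B)+A\circ(x+B) = x$, so $S$ is a right compositional inverse of $T$ of the asserted form. The main obstacle I anticipate is bookkeeping rather than ideas: carefully verifying that $\Phi$ really lands in $\T_N$, i.e. that the support of the composite stays well ordered inside $\G_N$. For this one wants a well‑ordered subset of $\G_N$ containing $\supp A$ and closed under $\m\mapsto\supp(\m')$ — precisely the kind of set constructed in Propositions~\ref{u1} and~\ref{u2} — together with point‑finiteness of the Taylor family; the strict decrease coming from Proposition~\ref{mvt1} is what keeps the iteration's supports inside one such fixed set.

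Finally, to upgrade ``right inverse'' to ``two‑sided inverse'': $S$ has the same shape $x+B$ with $B\fst x$ and $\supp B\subseteq\G_N$, so by what has just been proved $S$ itself has a right inverse $R$ with $S\circ R = x$. Then, by associativity of composition,
$$
	T \;=\; T\circ(S\circ R) \;=\; (T\circ S)\circ R \;=\; x\circ R \;=\; R ,
$$
whence $S\circ T = S\circ R = x$. Thus $S$ is the compositional inverse of $T$, completing the proof.
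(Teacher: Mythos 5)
Your proposal is correct and follows essentially the same route as the paper: a contraction/fixed-point argument in which Proposition~\ref{mvt1} supplies the strict contraction estimate, a fixed-point theorem for contracting maps on a (pseudo-/spherically) complete Hahn field (the paper cites Proposition~\ref{wellfixed}, i.e.\ van der Hoeven's Thm.~4.7) produces the right inverse, and the standard associativity argument upgrades it to a two-sided inverse. The only cosmetic difference is that you iterate on the correction term $B$ rather than on $S = x+B$ itself.
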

\begin{proof}
Let the function $\Phi$ be defined by $\Phi(S)=x-A\circ S$.
Then $\Phi$ maps $\SA := \SET{x+B}{B \fst x, \supp B \subseteq \G_N}$
into itself \cite[\EcomponN]{edgar}.
I claim $\Phi$ is contracting on $\SA$.
Indeed, if $S_1, S_2 \in \SS$ and $S_1 \ne S_2$, then
$$
	\Phi(S_2) - \Phi(S_1) = A\circ S_1 - A\circ S_2 \fst S_2-S_1
$$
by Proposition~\ref{mvt1}.

Apply the fixed-point theorem \cite[Thm.~4.7]{hoevenop}
(see Proposition~\ref{wellfixed}, below)
to get $S$ with $S = \Phi(S)$.  Then
$$
	T \circ S = S + A\circ S = \Phi(S) + A\circ S = x .
$$
As is well-known: if right inverses all exist, then they are
full inverses.  Review of the proof:
Suppose $T \circ S = x$ as found.  Start with $S$ and
get a right-inverse $T_1$ so $S \circ T_1 = x$.
Then
$T = T \circ x
	= T \circ (S \circ T_1)
	= (T \circ S) \circ T_1
	= x \circ T_1 = T_1$.
\end{proof}

\begin{pr}\label{inverse}
The set $\P$ is a group under composition.
\end{pr}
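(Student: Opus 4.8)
The plan is to reduce the existence of compositional inverses to Proposition~\ref{inverse1} by conjugating an arbitrary $T\in\P$ with suitable iterated exponentials and logarithms. Associativity of composition, the fact that $x$ is a two-sided identity, and closure of $\P$ under composition (Notation~\ref{no:LPSS}), together with the obvious observation that $x$ and all $\l_m$ lie in $\P$, are already available; and since Proposition~\ref{inverse1} moreover produces genuine two-sided inverses, the whole task is to show that every $T\in\P$ becomes, after composing on both sides with invertible elements built from the $\l_m$, of the form $x+A$ with $A\fst x$ and $\supp A$ log-free of finite height.

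First I would reduce to exponentiality zero. Let $p=\expo T$. By Proposition~\ref{exponentiality}, $\l_p\circ T\in\P$ has exponentiality $0$ (composing with $\log$ lowers the exponentiality by $1$ and with $\exp$ raises it by $1$). If $R$ is a compositional inverse of $\l_p\circ T$, then $(\l_p\circ T)\circ R=x$ gives $T\circ R=\l_{-p}$, hence $T\circ(R\circ\l_p)=x$; the symmetric computation shows $R\circ\l_p\in\P$ is a two-sided inverse of $T$. So assume $\expo T=0$. Then Proposition~\ref{exponentiality} gives $\log_n\circ T\sim\log_n$ for all large $n$, and composing on the right with $\exp_n$ (legitimate by Proposition~\ref{ineq1}) yields $T_1:=\log_n\circ T\circ\exp_n\sim x$, i.e.\ $T_1\in\SS$. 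Since $\log_n\circ\exp_n=\exp_n\circ\log_n=x$, we have $T=\exp_n\circ T_1\circ\log_n$, so a two-sided inverse $R$ of $T_1$ transports to the two-sided inverse $\exp_n\circ R\circ\log_n$ of $T$. Thus it suffices to invert an arbitrary $T_1\in\SS$.

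For $T_1=x+A\in\SS$ (so $A\fst x$) the final reduction removes the logarithms. Pick $M$ with $\supp A\subseteq\G_{N,M}$ and write $A=A_0\circ\log_M$, $\supp A_0\subseteq\G_N$. Then $T_1\circ\exp_M=\exp_M+A_0$, and $A_0=A\circ\exp_M\fst x\circ\exp_M=\exp_M$ by Proposition~\ref{ineq1}, so $T_1\circ\exp_M=\exp_M(1+\delta)$ with $\delta:=A_0/\exp_M\fst 1$ log-free; note $T_1\circ\exp_M\sim\exp_M$ is large and positive, so its iterated logarithms exist. Now $\log\big(\exp_j(1+\epsilon)\big)=\exp_{j-1}+\log(1+\epsilon)=\exp_{j-1}(1+\epsilon')$, and for $j\ge1$ and $\epsilon\fst1$ log-free the series $\log(1+\epsilon)=\epsilon-\epsilon^2/2+\cdots$ is small and log-free, hence so is $\epsilon'=\log(1+\epsilon)/\exp_{j-1}$. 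Iterating $M$ times ends with $\log\big(e^{x}(1+\epsilon)\big)=x+\log(1+\epsilon)$, so $T_2:=\log_M\circ T_1\circ\exp_M=x+A_1$ with $A_1=\log(1+\epsilon)\fst1\fst x$ and $\supp A_1\subseteq\G_{N'}$ for some finite $N'$. Proposition~\ref{inverse1} now gives a two-sided inverse $R\in\SS\subseteq\P$ of $T_2$, whence $\exp_M\circ R\circ\log_M$ is a two-sided inverse of $T_1=\exp_M\circ T_2\circ\log_M$; tracing back through the previous paragraph produces a two-sided compositional inverse of $T$ in $\P$, and $\P$ is a group.

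The step I expect to be the main obstacle is verifying that the two conjugations really land us in the class covered by Proposition~\ref{inverse1}: one must check that $A_1$ is log-free of finite height — which holds because the chain of logarithms stops at $x$, not at $\log x$, so no $\log x$ is ever introduced — and that the asymptotic estimates ($A_0\fst\exp_M$, and the passage from $\log_n\circ T\sim\log_n$ to $T_1\sim x$) survive composition, which is valid here only because $\exp_n$ resp.\ $\exp_M$ is substituted on the correct side so that Proposition~\ref{ineq1} applies; the remainder is routine manipulation of the identities $\log_m\circ\exp_m=\exp_m\circ\log_m=x$.
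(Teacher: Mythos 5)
Your proof is correct and follows essentially the same route as the paper: use the exponentiality $p=\expo T$ to conjugate $T$ by iterated exponentials and logarithms into a log-free element of $\SS$, apply Proposition~\ref{inverse1}, and transport the inverse back. The log-removal step you carry out by hand (conjugating $x+A$ by $\exp_M$ and expanding $\log(1+\epsilon)$) is exactly the content of Proposition~\ref{klargelogfree}, which the paper invokes implicitly with the phrase ``(if $k$ is large enough) $T_1$ is log-free.''
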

\begin{proof}
Let $T \in \P$.  Let $p = \expo T$,
so that $\log_k \circ T \circ \exp_k \sim \exp_p$
for large enough $k$.
Let $T_1 = \log_k \circ T \circ \exp_{k-p}$, so that
$T_1 \sim x$ and (if $k$ is large enough)
$T_1$ is log-free.  By Proposition~\ref{inverse1}
there is an inverse, say $T_1 \circ S_1 = x$.
Write $S = \exp_{k-p} S_1 \circ \log_k$.
Then $T \circ S = \exp_k\circ T_1 \circ \log_{k-p} 
\circ \exp_{k-p} \circ S_1 \circ \log_k = x$.
\end{proof}

\begin{re}
We need a grid-based version of Proposition~\ref{mvt1} to prove
existence of a grid-based compositional inverse using a grid-based
fixed-point theorem.  This is done in \cite[\Winv]{edgarw}.
\end{re}

\subsection*{An Example Inverse}
Consider the transseries $S = \log x + 1 + x^{-1} \in \P$.
We want to discuss its compositional inverse.  According
to the method above, we should compute
the inverse of $S_1 = S \circ \exp = x + 1 + e^{-x} \in \P$.
And if $T_1 = S_1^{[-1]}$, then $S^{[-1]} = \exp \circ T_1$.

For the inverse of $S_1 = x+1+e^{-x}$, write $A = 1+e^{-x}$
and solve by iteration
$Y = \Phi(Y)$, where $\Phi(Y) = x - A \circ Y = x - 1 - e^{-Y}$.
We end up with
\begin{align*}
	T_1 &= x - 1 - e e^{-x} - e^2 e^{-2x} - \frac{3e^3}{2} e^{-3x}
	-\frac{8e^4}{3} e^{-4x} +\cdots
	\\ &= x - 1 - \sum_{j=1}^\infty a_j e^{-jx}
\end{align*}
either by iteration, or with a linear equation for each
$a_j$ in terms of the previous ones.  (And $a_j$ is rational
times $e^j$.)  And then
\begin{align*}
	S^{[-1]} &= e^{T_1} = \frac{1}{e} e^x - 1 -\frac{e}{2} e^{-x}
	-\frac{2e^2}{3} e^{-2x}-\frac{9e^3}{8} e^{-3x}
	-\frac{32e^4}{15} e^{-4x}+\cdots
	\\ &= \frac{1}{e} e^x - 1 - \sum_{j=1}^\infty b_j e^{-jx} .
\end{align*}

\subsection*{Compositional Equations}
Because of the group property Proposition~\ref{inverse}
(or the grid-based version \cite[\Winv]{edgarw}), we know:
Let $S,T \in \T$.  If $S,T$ are both large and positive, then
there is a unique $Y \in \P$ with $S = T \circ Y$.

\begin{pr}\label{compcases}
Let $S,T \in \T$.  Then there is a unique $Y \in \P$
with $S = T \circ Y$ in each of the following cases:
$S$ and $T$ are both:
\begin{itemize}
\item[{\rm(a)}] large and positive
\item[{\rm(b)}] small and positive
\item[{\rm(c)}] large and negative
\item[{\rm(d)}] small and negative
\item[{\rm(e)}] For some $c \in \R$, $c \ne 0$,
$S \sim c$, $T \sim c$, $S>c$, $T>c$.
\item[{\rm(f)}] For some $c \in \R$, $c \ne 0$,
$S \sim c$, $T \sim c$, $S<c$, $T<c$.
\end{itemize}
There is a nonunique $Y \in \P$ with
$S = T \circ Y$ in case: for some $c \in \R$,
both $S=c$ and $T=c$.  In all other cases,
there is no $Y$ with $S = T \circ Y$.
\end{pr}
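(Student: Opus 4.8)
The plan is to reduce everything to the group property of $\P$ under composition (Proposition~\ref{inverse}), together with the monotonicity of composition in the left composand (Proposition~\ref{ineq1}). The organizing observation is that right composition with any $Y \in \P$ preserves the ``shape'' of a transseries: by Proposition~\ref{ineq1}, if $S \fgt 1$ then $S \circ Y \fgt 1$, if $S \fst 1$ then $S \circ Y \fst 1$, the sign of $S$ is preserved, and if $S \sim c$ then $S \circ Y \sim c$ (because $c \circ Y = c$) with the sign of $S - c$ preserved. Using the property noted after Remark~\ref{relationship} (if $1/k < |S| < k$ for some real $k > 1$ then $S \sim c$ for a unique nonzero real $c$), every $S \in \T$ lies in exactly one of the classes: large positive, large negative, small positive, small negative, the transseries $0$, a nonzero constant, or ($S \sim c$ for a unique nonzero $c$ and) $S > c$, or $S < c$; and each of these classes, including the value of $c$, is preserved by right composition with a $Y \in \P$.

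This immediately gives the negative direction: if $S = T \circ Y$ for some $Y \in \P$, then $S$ and $T$ lie in the same class with the same associated $c$. The configurations in which $S$ and $T$ share a class with the same $c$ are precisely (a)--(f) together with ``$S = T = c$ for some constant $c$'' (note $c = 0$ is the zero case); in all other configurations no $Y$ exists.

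For existence and uniqueness, case~(a) is exactly the group property: $Y := T^{[-1]} \circ S \in \P$ satisfies $T \circ Y = S$, and if $T \circ Y_1 = T \circ Y_2$ then composing on the left with $T^{[-1]}$ gives $Y_1 = Y_2$. Each of the remaining cases reduces to (a), possibly via (b), by applying a bijection of $\T$ that commutes with $\,\cdot \circ Y\,$ because composition is a field homomorphism fixing $\R$. Negation $S \mapsto -S$ carries (c) to (a), since $S = T\circ Y \Longleftrightarrow -S = (-T)\circ Y$; reciprocation $S \mapsto 1/S$ carries (b) to (a), since $S = T \circ Y \Longleftrightarrow 1/S = (1/T)\circ Y$ and $1/S, 1/T$ are then large positive; their composite carries (d) to (a). For (e), the map $S \mapsto S - c$ carries it to (b): from $S \sim c$ and $S > c$ one gets $S - c = c(S/c - 1)$ small and positive (for either sign of $c$), likewise for $T$, and $S = T \circ Y \Longleftrightarrow S - c = (T - c)\circ Y$; case (f) reduces to (d) the same way. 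In each reduction the map is a bijection between the two classes in question, so existence and uniqueness transfer from (a) verbatim. Finally, if $S = T = c$ is a constant, then $T \circ Y = c = S$ for every $Y \in \P$; since $\P$ contains more than one element (e.g.\ $x$ and $2x$), a solution exists but is not unique.

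The main obstacle is purely organizational: checking that (a)--(f) plus the constant case really do exhaust the ``same class, same $c$'' configurations, and that the three bijections $S \mapsto -S$, $S \mapsto 1/S$, $S \mapsto S - c$ each restrict to a genuine bijection between the pair of classes claimed and commute with right composition. The only mild subtlety is the sign of $c$ in (e) and (f): when $c < 0$, the inequality $S > c$ still forces $S - c$ to be small and positive, since $S/c - 1$ is then small and negative.
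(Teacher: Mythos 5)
Your proposal is correct and follows essentially the same route as the paper: case (a) via the group property of $\P$, and the remaining cases by the reductions $S \mapsto 1/S$, $S \mapsto -S$, and $S \mapsto S-c$, with the negative and nonuniqueness assertions handled by the invariance of the relevant classes under right composition (Proposition~\ref{ineq1}). Your write-up merely spells out the details the paper leaves as ``clear.''
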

\begin{proof}
(a) is from Proposition~\ref{inverse}.
(b)~Apply (a) to $1/S$ and $1/T$.
(c)~Apply (a) to $-S$ and $-T$.
(d)~Apply (b) to $-S$ and $-T$.
(e)~Apply (b) to $S-c$ and $T-c$.
(f)~Apply (d) to $S-c$ and $T-c$.

The concluding cases are clear.
\end{proof}

\subsection*{Mean Value Theorem}
Using Proposition~\ref{posderiv}, we get a MVT.

\begin{pr}\label{mvt3}
Given $A \in \T, S_1, S_2 \in \LP$, $S_1 < S_2$,
there is $S \in \LP$ so that
$$
	\frac{A \circ S_2 - A \circ S_1}{S_2-S_1} = A' \circ S .
$$
\end{pr}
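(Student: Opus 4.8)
The plan is to produce $S$ by solving the composition equation $A'\circ S=Q$, where $Q:=(A\circ S_2-A\circ S_1)/(S_2-S_1)$ is the ``secant slope'' (well defined since $S_2-S_1>0$), and to solve it with the compositional-equation machinery of Proposition~\ref{compcases}. So the work is to show that $A'$ and $Q$ fall into the same one of the classes that occur there: large positive, small positive, large negative, small negative, or asymptotic to a nonzero real constant from above or from below (the case of $A'$ itself constant being disposed of separately).

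First I would clear the degenerate cases. If $A'=0$ then $A$ is a real constant and $Q=0=A'\circ S$ for every $S\in\LP$. If $A'$ is a nonzero real constant $c$, then $A\circ S_2-A\circ S_1=c\,(S_2-S_1)$, so $Q=c=A'\circ S$ for every $S\in\LP$. So assume henceforth that $A'\neq0$ and is nonconstant; then exactly one of $A'\fgt1$, $A'\fe1$, $A'\fst1$ holds, and in the middle case the fact recorded just after Remark~\ref{relationship} provides a nonzero real $c$ with $A'\sim c$ (and $A'\neq c$).

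The key step transfers the comparison of $A'$ with $1$ to a comparison of $Q$ with $1$, using Proposition~\ref{derivcompare} against the comparand $B=x$, together with Proposition~\ref{posderiv} for signs. Writing $\Delta T:=T\circ S_2-T\circ S_1$ (so $\Delta x=S_2-S_1>0$), Proposition~\ref{derivcompare} gives: if $A'\fst1$ then $\Delta A\fst\Delta x$; if $1\fst A'$ then $\Delta x\fst\Delta A$; and, applied to the pair $A-cx$ and $cx$ (whose derivatives are $A'-c\fst c$ and $c$), if $A'\sim c$ then $\Delta A-c\,\Delta x\fst c\,\Delta x$, i.e.\ $\Delta A\sim c\,\Delta x$. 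Dividing by $\Delta x$ yields respectively $Q\fst1$, $Q\fgt1$, $Q\sim c$. For signs, Proposition~\ref{posderiv} gives $\operatorname{sign}\Delta A=\operatorname{sign}A'$, hence $\operatorname{sign}Q=\operatorname{sign}A'$; and when $A'\sim c$, applying Proposition~\ref{posderiv} to $A-cx$ (derivative $A'-c$) shows that $Q-c=\Delta(A-cx)/\Delta x$ has the same sign as $A'-c$. So in each remaining case $A'$ and $Q$ are simultaneously large positive, small positive, large negative, small negative, or else both $\sim c$ on the same side of $c$. The matching case of Proposition~\ref{compcases}, reading its ``$S$'' as $Q$ and its ``$T$'' as $A'$, then yields $Y\in\P$ with $Q=A'\circ Y$, and we take $S:=Y\in\LP$.

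The step I expect to require the most care is the borderline case $A'\fe1$. One should resist trying to copy the classical proof by squeezing $Q$ between $A'\circ S_1$ and $A'\circ S_2$: there is no nonconstant ``affine'' transseries against which to hold a slope fixed (if $\gamma\in\T$ then $(\gamma x)'=\gamma+\gamma'x\neq\gamma$), so the familiar interval estimate is simply unavailable in the expected form. Instead one must identify the \emph{exact} constant $c$ with $Q\sim c$ and the \emph{exact} side of $c$ on which $Q$ lies, which is precisely what the comparison with $A-cx$ through Propositions~\ref{derivcompare} and~\ref{posderiv} accomplishes. A more routine but still necessary point is to check that the trichotomy for $A'$, together with its sign, really does place $Q$ in one of the cases of Proposition~\ref{compcases}, so that $A'\circ S=Q$ is always solvable.
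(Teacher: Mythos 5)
Your proof is correct and takes essentially the same route as the paper: solve $A'\circ S=Q$ via Proposition~\ref{compcases} after checking that $A'$ and $Q$ fall into the same case, the check being done by applying Propositions~\ref{posderiv} and~\ref{derivcompare} to $A-cx$. The paper packages the same-case verification more uniformly---for every real $c$, $A'>c$ implies $Q>c$ and $A'<c$ implies $Q<c$, using only Proposition~\ref{posderiv}---but this is the identical idea.
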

\begin{proof}
Write $B = (A \circ S_2 - A \circ S_1)/(S_2-S_1)$.
We claim that Proposition~\ref{compcases} shows that there
is a solution $S$ to $B = A'\circ S$.  So we have
to show that $A', B$ are in the same case of
Proposition~\ref{compcases}.

Let $c \in \R$.  If
$A' > c$, then $(A - cx)'>0$, and therefore
by Proposition~\ref{posderiv}
$(A -cx)\circ S_1 < (A -cx)\circ S_2$, so
$A\circ S_2 - A\circ S_1 > c(S_2-S_1)$, so
$(A\circ S_2 - A\circ S_1)/(S_2-S_1) > c$,
so $B > c$.  Similarly:
if $A' < c$, then $B < c$.
These hold for all real $c$, so in fact
$A'$ and $B$ are in the same case.
\end{proof}

The following proposition, too, has---so far---only an
involved proof, which will not be given here.  See
Section~\ref{taylorsection} for this and
still more versions of the Mean Value Theorem.

\begin{pr}\label{mvt2}
Let $A \in \T, S_1, S_2 \in \LP$.
If $A'' > 0$ and $S_1 < S_2$, then
$$
	A' \circ S_1 <
	\frac{A \circ S_2-A \circ S_1}{S_2-S_1} <
	A'\circ S_2 .
$$
\end{pr}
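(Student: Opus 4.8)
The plan is to deduce mvt2 from the first‑order Mean Value Theorem (Proposition~\ref{mvt3}) together with the monotonicity statements of Proposition~\ref{posderiv}. Since $A''>0$, applying Proposition~\ref{posderiv} to $A'$ (whose derivative is $A''>0$) shows that $S\mapsto A'\circ S$ is strictly increasing on $\P$, hence injective. By Proposition~\ref{mvt3} there is $S\in\LP$ with
$$
\frac{A\circ S_2-A\circ S_1}{S_2-S_1}=A'\circ S ,
$$
and this $S$ is then unique. So the whole statement reduces to the single claim that $S$ lies \emph{strictly between} $S_1$ and $S_2$: once $S_1<S<S_2$, the strict monotonicity above gives $A'\circ S_1<A'\circ S<A'\circ S_2$, which is exactly mvt2.

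The obstacle is precisely this interior‑point claim. Nothing in Proposition~\ref{mvt3} locates $S$ relative to $S_1,S_2$, and the naive attempt to pin it down is circular: "if $S\ge S_2$ then $A\circ S_2-A\circ S_1\ge(A'\circ S_2)(S_2-S_1)$" would already be a contradiction — but only because the inequality it contradicts is the right‑hand half of mvt2 itself. So a genuinely new argument is needed, which is why a separate (and longer) treatment lives in Section~\ref{taylorsection}. One clean way to supply it is a second‑order mean value theorem: an expansion $A\circ S_2=A\circ S_1+(A'\circ S_1)(S_2-S_1)+\tfrac12(A''\circ\Xi)(S_2-S_1)^2$ with $S_1<\Xi<S_2$, and the mirror expansion about $S_2$; since $A''>0$ forces $A''\circ\Xi>0$ by Proposition~\ref{ineq1} and $(S_2-S_1)^2>0$, the first expansion yields $A\circ S_2-A\circ S_1>(A'\circ S_1)(S_2-S_1)$ and the second yields the complementary inequality, and dividing by $S_2-S_1>0$ finishes. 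That merely relocates the difficulty into proving such a Taylor form with a strictly interior remainder point.

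The route I would actually take is to introduce the real parameter $t\in[0,1]$ and the transseries‑valued path $f(t)=A\circ\bigl(S_1+t(S_2-S_1)\bigr)$, noting $S_1+t(S_2-S_1)\in\LP$ and that it lies strictly between $S_1$ and $S_2$ for $0<t<1$. Then $f'(t)=\bigl(A'\circ(S_1+t(S_2-S_1))\bigr)(S_2-S_1)$ and $f''(t)=\bigl(A''\circ(S_1+t(S_2-S_1))\bigr)(S_2-S_1)^2>0$ for every $t$, so $f'$ is strictly increasing on $[0,1]$. The decisive observation is that, \emph{as a function of the real variable $t$}, the transseries $f'(0)=(A'\circ S_1)(S_2-S_1)$ is a genuine constant, so that
$$
f(1)-f(0)-f'(0)=\int_0^1\bigl(f'(t)-f'(0)\bigr)\,dt ,
$$
where the integrand is $\ge 0$ throughout and is $\ge f'(\tfrac12)-f'(0)>0$ on $[\tfrac12,1]$; hence the left side is $>0$, and dividing by $S_2-S_1>0$ gives the left inequality of mvt2. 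The same computation with $f'(1)$ (again constant in $t$) gives $f'(1)-\bigl(f(1)-f(0)\bigr)=\int_0^1(f'(1)-f'(t))\,dt>0$, i.e.\ the right inequality. The hard part here is not the one‑variable calculus but setting it up rigorously for transseries‑\emph{valued} functions of a real parameter — the chain rule for $t\mapsto A\circ(\cdots)$, the fundamental theorem of calculus, and the monotonicity "$g\ge 0\Rightarrow\int g\ge 0$" with its obvious strict refinement — which is exactly the convergence machinery that makes the honest proof "involved."
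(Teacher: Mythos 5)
You have not proved the proposition, and to your credit you say so: both routes you sketch stop exactly where the work begins. For the record, the paper does not prove it either --- it reappears as $[\B_1]$ in Section~\ref{taylorsection}, where the author states that proofs for the $[\B_n]$ family by the methods of Section~\ref{involvedproof} ``will be ugly or impossible'' and defers to \cite{DMM} and \cite{hoeven}. Your first paragraph is a correct diagnosis rather than a proof: combining Proposition~\ref{mvt3} with the strict monotonicity of $S\mapsto A'\circ S$ (from Proposition~\ref{posderiv} applied to $A'$) reduces everything to showing $S_1<S<S_2$, which is exactly Proposition~\ref{mvtbetween} --- and the paper derives \emph{that} from \ref{mvt2}, not the other way around. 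You rightly flag this circularity.

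The second route, the path $f(t)=A\circ\bigl(S_1+t(S_2-S_1)\bigr)$ for real $t\in[0,1]$, fails for a more concrete reason than the foundational vagueness you concede at the end. The paper's own example in the Grid-Based Version subsection kills it: take $A=e^{-x^3}$, $S_1=x$, $S_2=x+x^{-1}$, so that $f(t)=e^{-(x+tx^{-1})^3}$ has dominant monomial $e^{-x^3-3tx}$. As $t$ ranges over $[0,1]$ these monomials form a strictly decreasing continuum lying in no grid and in no well-ordered subset of $\G$; hence $\bigcup_{t}\supp f(t)$ is contained in no $\T_\AA$, there is no common monomial scale on which to differentiate or integrate coefficientwise in $t$, and the identity $f(1)-f(0)=\int_0^1 f'(t)\,dt$ has no meaning in any of the topologies of Section~\ref{sec:converge}. (The step ``$f''>0$ so $f'$ is strictly increasing'' is salvageable without calculus in $t$, via Proposition~\ref{posderiv} applied to $A'$; it is the fundamental theorem of calculus step that cannot be salvaged this way.) Any honest argument has to proceed monomial-by-monomial in the left composand $A$, in the style of Lemmas~\ref{ppb}--\ref{ppg}, or through the Taylor estimates $[\A_n]$ of Section~\ref{simpletaylor}; the one-real-variable picture does not transfer.
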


Using this, we can improve the Mean Value Theorem~\ref{mvt3}:

\begin{pr}\label{mvtbetween}
Given $A \in \T, S_1, S_2 \in \LP$, $S_1 < S_2$,
there is $S \in \LP$, $S_1 < S < S_2$ so that
$$
	\frac{A \circ S_2 - A \circ S_1}{S_2-S_1} = A' \circ S .
$$
\end{pr}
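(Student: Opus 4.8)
The plan is to combine the plain Mean Value Theorem (Proposition~\ref{mvt3}), which already produces some $S \in \LP$ with $(A\circ S_2 - A\circ S_1)/(S_2-S_1) = A'\circ S$, with the strict two-sided estimate of Proposition~\ref{mvt2}, after splitting into cases according to the sign of the transseries $A''$. Write $B = (A\circ S_2 - A\circ S_1)/(S_2-S_1)$ for the divided difference; I must locate a solution of $A'\circ S = B$ strictly between $S_1$ and $S_2$.

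First dispose of the degenerate case $A'' = 0$. Then $A'$ is a real constant $c$ (the kernel of the derivation is $\R$), so $A'\circ S = c$ for every $S \in \LP$. By Proposition~\ref{mvt3} there is some $S_0 \in \LP$ with $B = A'\circ S_0 = c$, so $B = c$, and then \emph{any} $S$ with $S_1 < S < S_2$ — for instance $S = (S_1+S_2)/2 \in \LP$ — satisfies $A'\circ S = c = B$.

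Next the case $A'' > 0$. Proposition~\ref{mvt2} gives the strict inequalities $A'\circ S_1 < B < A'\circ S_2$, and Proposition~\ref{mvt3} gives $S \in \LP$ with $A'\circ S = B$; it remains to show $S_1 < S < S_2$. Apply Proposition~\ref{posderiv} with $T = A'$, so that $T' = A'' > 0$: for any $U_1 < U_2$ in $\P$ we get $A'\circ U_1 < A'\circ U_2$, i.e.\ the map $U \mapsto A'\circ U$ is strictly increasing on $\P$. From $A'\circ S_1 < B = A'\circ S$ we cannot have $S \leq S_1$ (that would force $A'\circ S \leq A'\circ S_1$), so $S_1 < S$; from $A'\circ S = B < A'\circ S_2$ we cannot have $S \geq S_2$, so $S < S_2$. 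Finally, the case $A'' < 0$ follows by applying the previous case to $-A$: its second derivative is positive, so there is $S$ with $S_1 < S < S_2$ and $-B = (-A)'\circ S$, that is, $B = A'\circ S$.

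The argument is short once Propositions~\ref{mvt3} and~\ref{mvt2} are available; the only points needing care are that Proposition~\ref{mvt2} supplies \emph{strict} inequalities only when $A'' \neq 0$, forcing the flat case $A'' = 0$ to be handled separately, and that the passage from the value equation $A'\circ S = B$ to the location $S_1 < S < S_2$ rests on the strict monotonicity of $U \mapsto A'\circ U$, which is exactly Proposition~\ref{posderiv} applied to $A'$. I expect no real obstacle beyond keeping these case distinctions straight.
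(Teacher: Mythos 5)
Your proposal is correct and follows the paper's own argument: first use Proposition~\ref{mvt3} to produce $S$ with $A'\circ S = B$, then Proposition~\ref{mvt2} to place $B$ strictly between $A'\circ S_1$ and $A'\circ S_2$, and finally the strict monotonicity of $U \mapsto A'\circ U$ from Proposition~\ref{posderiv} (applied to $T = A'$) to conclude $S_1 < S < S_2$, with the cases $A''<0$ and $A''=0$ handled by sign reversal and direct inspection. The only difference is that you spell out the ``easy'' case $A''=0$ and the reduction of $A''<0$ to $A''>0$ explicitly, which the paper leaves to the reader.
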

\begin{proof}
First assume $A''>0$.
Let $S$ be as in Proposition~\ref{mvt3}.  By
Proposition~\ref{mvt2},
$A'(S_1) < A'(S) < A'(S_2)$.  So by
Proposition~\ref{posderiv} we conclude $S_1 < S < S_2$.

The case $A''<0$ is similar.  The case $A''=0$
is easy.
\end{proof}

\subsection*{Intermediate Value Theorem}

\begin{pr}\label{ivt}
Let $K,T \in \T$, $A,B \in \LP$.  Assume
$T(A) \le K \le T(B)$.  Then there is $S \in \LP$
with $T(S) = K$ and either $A \le S \le B$ or $A \ge S \ge B$.
\end{pr}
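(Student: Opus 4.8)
The plan is to reduce the statement to the compositional-equation solvability result Proposition~\ref{compcases}, and then use the monotonicity of right composition (Proposition~\ref{posderiv}) to put the solution between $A$ and $B$. So the first step is to produce some $S \in \P$ with $T \circ S = K$.

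If $T' = 0$, then $T$ is a real constant $c$, so $T(A) = c = T(B)$, the hypothesis forces $K = c$, and $S = A$ works (and lies weakly between $A$ and $B$). So assume $T' \ne 0$; then $T$ is nonzero and not constant, hence by its canonical multiplicative decomposition it falls into exactly one of the six cases (a)--(f) of Proposition~\ref{compcases} --- large or small, positive or negative, or $\sim c$ with $T>c$ or $T<c$ for some nonzero real $c$. Next I would check that $T(A)$, $T(B)$, and $K$ all lie in the \emph{same} case. For $T\circ A$ and $T\circ B$ this follows from Proposition~\ref{ineq1}: right composition with an element of $\P$ preserves sign, preserves $\fst 1$ and $\fgt 1$, and, applied to $T - c$ in the $\fe 1$ case, sends $T = c + (\text{small})$ to $T\circ A = c + (\text{small})$ with the same sign of the small part --- so $T\circ A$ and $T\circ B$ have the same type and the same associated constant $c$ as $T$. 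Then, from $T(A) \le K \le T(B)$ and the dictionary between $<$ and $\fst$ in Remark~\ref{relationship} (notably ``$0 < U \le V \Rightarrow U \fsteq V$''), one squeezes $K$ into this common case: in each of the six cases the bound on the appropriate side pins down the sign of $K$, whether $K$ is large or small, and (in cases (e) and (f)) whether $K$ is above or below $c$. Hence $K$ has the same type as $T$, and Proposition~\ref{compcases} produces a (unique) $S \in \P$ with $T \circ S = K$.

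For the last step, since $T' \ne 0$ and $\T$ is totally ordered, either $T' > 0$ or $T' < 0$. If $T' > 0$, then from $T(A) \le T(S) \le T(B)$ and the contrapositive of Proposition~\ref{posderiv} (if $S < A$ then $T\circ S < T\circ A$; if $S > B$ then $T\circ S > T\circ B$) we get $A \le S \le B$; if $T' < 0$, the same argument gives $B \le S \le A$, i.e.\ $A \ge S \ge B$.

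I expect the main obstacle to be the squeeze in the middle step: the routine but fussy verification that a transseries caught between two transseries of a prescribed type must itself be of that type. Its real content is the fact recorded just after Remark~\ref{relationship} --- that $1/k < U < k$ for all real $k > 1$ forces $U \sim c$ for some real $c$ --- together with the behaviour of $\fsteq$ under multiplication by a positive constant and under subtracting the dominant constant; with that in hand, the rest is mechanical.
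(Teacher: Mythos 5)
Your proposal is correct and follows essentially the same route as the paper: reduce to the six sign/size types, obtain $S$ from the invertibility of composition (you package this as Proposition~\ref{compcases}, the paper applies $T^{[-1]}$ directly in the large-positive case and transforms the others into it), and place $S$ between $A$ and $B$ via the monotonicity of Proposition~\ref{posderiv}. The ``squeeze'' you single out as the main obstacle is exactly the verification the paper also makes (e.g.\ ``$K$, which is between them, is large and positive''), and your justification of it via Remark~\ref{relationship} is sound.
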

\begin{proof}
If $T(A)=K$, choose $S=A$; if $T(B)=K$, choose $S=B$.
So we may assume $T(A) < K < T(B)$.
We will consider cases for $T$.

(a) First assume $T$ is large and positive.  Then the inverse
$T^{[-1]}$ exists in $\LP$.  Also $T(A),T(B)$ are large and positive,
so $K$, which is between them, is large and positive.
Define $S = T^{[-1]}(K)$.  Of course $T(S) = K$.
Since $T^{[-1]}$ is large and positive
it is increasing (by Proposition~\ref{posderiv}), so applying $T^{[-1]}$ to
$T(A) < K < T(B)$ we get $A < S < B$.

(b)  Assume $T$ is large and negative.  Apply case (a) to $-T$.

(c)  Assume $T$ is small and positive.  Apply case (a) to $1/T$.

(d)  Assume $T$ is small and negative.  Apply case (c) to $-T$.

(e)  Assume there is $a \in \R$ with $T \sim a, T > a$.
Apply case (c) to $T-a$.

(f)  Assume there is $a \in \R$ with $T \sim a, T < a$.
Apply case (d) to $T-a$.

(g)  The only case left is $T=a$ for some $a \in \R$, so
$T(A) = T(B) = a = K$, and this case was
taken care of at the beginning of the proof.  Or
let $S = (A+B)/2$ to get $S$ strictly between $A$ and $B$
when $A \ne B$.
\end{proof}

\begin{re}
Using \ref{ivt} we can deduce \ref{mvtbetween} from \ref{mvt2}
without the need of \ref{mvt3}.  But \ref{mvt2} is
still the difficult step.
\end{re}

\section{Taylor's Theorem}\label{taylorsection}
Here we will formulate many versions of Taylor's Theorem.
Unfortunately, proofs are (as far as I know) still
quite involved.  Proofs (for most cases) will not be included here.
See \cite[\S6]{DMM}
for well-based transseries and \cite[\S5.3]{hoeven}
for grid-based transseries.  But in some cases it may
not be clear that they have proved everything listed here.

Recall definitions $\G_N$, $\G_{N,M}$, $\G_\bullet$, etc.
If $\AA$ is a set of monomials, and $S \in \P$,
write $\AA \circ S := \SET{\g\circ S}{\g \in \AA}$.
Let $U \in \T$, then we say
$U \fst \AA$ if $U \fst \g$ for all $\g \in \AA$.
Recall that if $\g \in \G_{N,M} \setminus \G_{N-1,M}$
and $\g\fst 1$, then $\g \fst \G_{N-1,M}$.

Let $T \in \T$, $S_1, S_2 \in \LP$.  For $n \in \N$ define
\begin{equation*}
	\Delta_n(T,S_1,S_2) :=
	T(S_2) - \sum_{k=0}^{n-1} \frac{T^{(k)}(S_1)}{k!}\,(S_2-S_1)^k .
\end{equation*}
When $S_1, S_2$ are understood, write $\Delta_n(T)$.  The first
few cases:
\begin{align*}
	\Delta_0(T) &= T(S_2),\\
	\Delta_1(T) &= T(S_2)-T(S_1),\\
	\Delta_2(T) &= T(S_2)-T(S_1)-T'(S_1)\cdot(S_2-S_1),\\
	\Delta_3(T) &= T(S_2)-T(S_1)-T'(S_1)\cdot(S_2-S_1)
	- \frac{1}{2}T''(S_1)\cdot(S_2-S_1)^2.\\
\end{align*}

Note that derivatives $\partial^k$ are strongly additive, and
therefore these $\Delta_n$ are also.  That is:
if $S = \sum_{i \in I} A_i$ (in the asymptotic topology),
then $\Delta_n(S) = \sum \Delta_n(A_i)$.

\begin{no} Formulations.\label{taylorlabel}
\begin{itemize}
\item[{[$\A_n$]}] Let $T \in \T_{N,M}$, $T \notin \R$, $S_1, S_2 \in \LP$.
If $N = 0$ assume $S_2-S_1 \fst S_1$.  If $N > 0$ assume
$S_2-S_1 \fst \G_{N-1,M}\circ S_1$.
Let $n \in \N$.  If $T^{(n)} \ne 0$, then
$$
	\Delta_n(T) \sim \frac{T^{(n)}(S_1)}{n!}\,(S_2-S_1)^n .
$$

\item[{[$\A_\infty$]}] Let $T \in \T_{N,M}$, $T \notin \R$, $S_1, S_2 \in \LP$.
If $N = 0$ assume $S_2-S_1 \fst S_1$.  If $N > 0$ assume
$S_2-S_1 \fst \G_{N-1,M}\circ S_1$.  Then
$$
	T(S_2) = \sum_{j=0}^\infty \frac{T^{(j)}(S_1)}{j!}\,(S_2-S_1)^j .
$$

\item[{[$\B_n$]}] Let $T \in \T$, let $S_1, S_2 \in \LP$, and let $n \in \N$.
If $T^{(n+1)}>0$ and $S_1 < S_2$, then
$$
	\frac{T^{(n)}(S_1)}{n!}\,(S_2-S_1)^n < \Delta_n(T)
	< \frac{T^{(n)}(S_2)}{n!}\,(S_2-S_1)^n .
$$
Other cases also: If $T^{(n+1)} < 0$, reverse the inequalities.
If $S_1 > S_2$ and $n$ is even, reverse the inequalities.

\item[{[$\CC_n$]}] Let $T \in \T$, let $S_1, S_2 \in \LP$, and let $n \in \N$.
If $T^{(n)} > 0$ and $S_1 < S_2$, then $\Delta_n(T) > 0$.
Other cases also: If $T^{(n)} < 0$, reverse the inequality.
If $S_1 > S_2$ and $n$ is odd, reverse the inequality.

\item[{[$\DD_n$]}] Let $A,B \in \T$, let $S_1, S_2 \in \LP$, and let
$n \in \N$.
If $A^{(n)} \fst B^{(n)}$ then $\Delta_n(A) \fst \Delta_n(B)$.
\end{itemize}
\end{no}

Some beginning cases.
\begin{itemize}
\item[{[$\A_0$]}] If $(S_2-S_1)$ is appropriately small,
then $T(S_2) \sim T(S_1)$.

\item[{[$\A_1$]}] If $(S_2-S_1)$ is appropriately small,
then $T(S_2) -T(S_1) \sim T'(S_1)\cdot(S_2-S_1)$.
Proved in \ref{tterm}.

\item[{[$\B_0$]}] If $T'>0$ and $S_1 < S_2$, then
$T(S_1) < T(S_2) < T(S_2)$. (Second inequality is too strong.)
This is \ref{posderiv}, proved in \ref{posderivthm}.

\item[{[$\B_1$]}] If $T''>0$ and $S_1 \ne S_2$, then
$$
	T'(S_1) < \frac{T(S_2)-T(S_1)}{S_2-S_1}
	< T'(S_2) .
$$
This is \ref{mvt2}.

\item[{[$\CC_0$]}] If $T > 0$, then $T(S_2) > 0$.
This is in \ref{ineq1}.

\item[{[$\CC_1$]}] If $T' > 0$ and $S_1 < S_2$, then $T(S_2)-T(S_1) > 0$.
This is \ref{posderiv} again.

\item[{[$\DD_0$]}] If $A \fst B$ then $A(S_2) \fst B(S_2)$.
This is in \ref{ineq1}.

\item[{[$\DD_1$]}] If $A' \fst B'$ then $A(S_2) - A(S_1) \fst B(S_2)-B(S_1)$.
This is \ref{derivcompare}, proof in \ref{posderivthm}.

\end{itemize}

A variant form of [$\B_n$] follows using the intermediate
value theorem (a consequence of [$\B_1$]).

\begin{itemize}
\item[{[$\B_n'$]}] Let $T \in \T$, let $S_1, S_2 \in \LP$, and let $n \in \N$.
If $S_1 \ne S_2$, then
there exists $\widetilde{S}$ strictly between $S_1$ and $S_2$
such that
$$
	\Delta_n(T,S_1,S_2) = \frac{T^{(n)}\big(\widetilde{S}\big)}{n!}\,(S_2-S_1)^n .
$$
\end{itemize}

\subsection*{Good Proofs Needed---But What Methods?}
A good exposition is needed for the proofs of the principles
stated in \ref{taylorlabel}.  First steps
are seen below (Section~\ref{simpletaylor} for $[\A_1]$
and Section~\ref{involvedproof} for $[\CC_1]$ and $[\DD_1]$).
Now proofs for $[\A_n]$ and $[\A_\infty]$ should be possible along
the same lines.
But I think further proofs for $[\B_n], [\CC_n], [\DD_n]$
along those lines will be
ugly or impossible.  So a better approach is needed.
Even if proofs can, indeed, be found in the literature
(such as \cite[\S6]{DMM} and \cite[\S5.3]{hoeven}), they are
not as elementary as one might hope.

Related results could be expected from the same methods, perhaps.
For example, does the following follow from the principles listed
above, or would it require additional proof?

\textit{Let $U,V \in \T$, $S_1,S_2 \in \LP$.
If $U'>0, V>0, S_1 < S_2$, then}
$$
	U(S_1) \int_{S_1}^{S_2} V <
	\int_{S_1}^{S_2} UV < U(S_2) \int_{S_1}^{S_2} V .
$$
Or: \textit{There exists $\widetilde{S}$ between $S_1$ and $S_2$
with}
$$
	\int_{S_1}^{S_2} UV = U\big(\widetilde{S}\,\big) \int_{S_1}^{S_2} V .
$$
Equivalently: \textit{Let $A,B \in \T, S_1, S_2 \in \LP$
with $B'\ne 0$ and $S_1 \ne S_2$.
Then there exists $\widetilde{S}$ between $S_1$ and $S_2$ with}
$$
	\frac{A(S_2)-A(S_1)}{B(S_2)-B(S_1)} =
	\frac{A'\big(\widetilde{S}\,\big)}{B'\big(\widetilde{S}\,\big)} .
$$
[Equivalence comes from writing $B'=V$, $A'=UV$.]

One method used for proofs such as these (in conventional calculus)
suggests that we need to know about \Def{transseries of two variables}
in order to use the same proofs in this setting.  This remains
to be properly defined and investigated.

\section{Topology and Convergence}\label{sec:converge}
In \cite[\Easympt]{edgar} we defined only the ``asymptotic topology'' for $\T$.
But there are other topologies or types of convergence.  And none
of them has all of the desirable properties.

The \Def{attractive topology} is described by van der Hoeven \cite{hoevenop};
I will use letter H for it, $T_\gamma \ah T$.  For our situation
(with totally ordered valuation group $\G$) it is also the
order topology for $\T$ and the topology arising from the valuation
$\mag$.

\begin{de}
Let $T_\gamma$ be a net in $\T$ and let $T \in \T$.  Then
$T_\gamma \ah T$ iff for every $\m \in \G$ there is $\gamma_\m$
such that for all $\gamma \ge \gamma_\m$ we have
$T - T_\gamma \fst \m$.
\end{de}

This is the convergence of a metric.  Because every transseries
has finite height, there is a countable base for the
H-neighborhoods of zero made up of the sets
$$
	\o(1/\exp_m) = \SET{T \in \T}{T \fst 1/\exp_m}
	\qquad\text{for $m=0, 1, 2, \cdots$}.
$$
Here, as usual, $\exp_0 = x, \exp_1 = e^x, \exp_2 = e^{e^x}$, and so on.

Continuity:  (The ``$\epsilon$--$\delta$'' type definition.)
A function $\Psi \takes \T \to \T$ is H-con\-tin\-uous
at $S_0 \in \T$ iff: for every $\m \in \G$ there is $\n \in \G$
so that for all $S \in \T$, if
$S - S_0 \fst \n$ then $\Psi(S) - \Psi(S_0) \fst \m$.
We may write it like this:
$\Psi\big(S_0 + \o(\n)\big) \subseteq \Psi(S_0) + \o(\m)$.

\medskip
The \Def{asymptotic topology} I get from Costin \cite{costintop};
I will use letter C for it, $T_j \ac T$.  Recall the definition:

\begin{de}
$T_j \overset{\ebmu,\bm}{\longrightarrow} T$ iff
$\supp(T_j) \subseteq \GRID^{\ebmu,\bm}$ for all $j$ and
$\supp(T_j-T)$ is point-finite;

$T_j \muto T$ iff there exists $\bm$ with
$T_j \overset{\ebmu,\bm}{\longrightarrow} T$;

$T_j \ac T$ iff there exists $\bmu$ with $T_j \muto T$;
\end{de}

Sets $\T^{\ebmu,\bm} = \SET{T \in \T}{\supp T \subseteq \GRID^{\ebmu,\bm}}$
are metrizable for $\ac$.
The asymptotic topology for all of $\R\lbb \G \rbb = \T$
is an inductive limit:
open sets are easily described, convergence (except for sequences)
is not.
A set $\SU \subseteq \T$ is C-\Def{open} iff
$\SU \cap \T^{\ebmu,\bm}$ is open in $\T^{\ebmu,\bm}$
(according to $\ac$) for all $\bmu$ and $\bm$.

\begin{de}
Here is a similar convergence, applying to well-based transseries,
but which makes sense even for grid-based transseries.

Let $\AA \subseteq \G$ be well ordered.
$T_j \overset{\AA}{\longrightarrow} T$ iff
$\supp(T_j) \subseteq \AA$ for all $j$ and
$\supp(T_j-T)$ is point-finite;

$T_j \aw T$ iff there exists well ordered $\AA \subseteq \G$
with $T_j \overset{\AA}{\longrightarrow} T$.
\end{de}

Sets $\T_\AA := \SET{T \in \T}{\supp T \subseteq \AA}$
are metrizable for $\aw$, since $\AA$ is countable.
As before, the W-topology for all of $\T$
is an inductive limit:
A set $\SU \subseteq \T$ is W-\Def{open} iff
$\SU \cap \T_\AA$ is open in $\T_\AA$
(according to $\aw$) for all well ordered $\AA$.

\subsection*{Basics}
The attractive topology is discrete on
$\T_{NM} = \R\lbb \G_{NM} \rbb$, the transseries of
given height and depth.  Indeed, if $T \in \T_{NM}$,
then for $n>N$ the set $T+\o(1/\exp_n)$ is open and
$\T_{NM} \cap \big(T+\o(1/\exp_n)\big) = \{T\}$.
So a net contained in some $\T_{NM}$ converges
iff it is eventually constant.
The series representing $T \in \T$
(for example series $\sum_{j=0}^\infty x^{-j}$) is essentially
never H-convergent---it is H-convergent only if it has all
but finitely many terms equal to $0$.

For each $\m$, the ``coefficient'' map $T \mapsto T[\m]$ is continuous from
$(\T,\text{asymptotic})$ to $(\R,\text{discrete})$.
Indeed, given $\m$ and $T_0 \in \T$, the function $T[\m]$ is
constant on the coset $T_0 + \o(\m)$.  So it is better than
continuous: it is locally constant.

The series representing $T \in \T$ is C-convergent to $T$.
And W-convergent.  Consider the sequence $x^{-\log j}$,
($j=1,2,\cdots$).  This set is well ordered but not
grid-based.  So $x^{-\log j} \aw 0$ but not
$x^{-\log j} \ac 0$.

Coefficient maps $T[\m]$ are
C-continuous and W-continuous.
I guess locally constant, too, since
sets of the form
$\SET{T \in \T}{T[\m] = a}$
are C-open and W-open.

The whole transline $\T$ is not metrizable for C or W.
Let $T_{jk} = x^{-j}e^{kx}$.  Then according to C convergence,
$$
	\lim_{j\to\infty} T_{jk} = 0\qquad\text{for each $k \in \N$.}
$$
In a metric space, it would then be possible to choose
$j_1, j_2, j_3, \cdots$ so that
$$
	\lim_{k \to \infty} T_{j_k k} = 0 .
$$
(For example,  for each $k$ choose $j_k$ so that the distance from
$T_{j_k k}$ to $0$ is ${}< 1/k$.)
But that is false for C or W.

\subsection*{Well-Based Pseudo Completeness}
A system $T_\alpha \in \T$, where $\alpha$ ranges over
the ordinals up to some limit ordinal $\lambda$, is
called a \Def{pseudo Cauchy sequence} iff
$T_\alpha - T_\beta \fgt T_\beta - T_\gamma$
for all $\alpha < \beta < \gamma < \lambda$.
And $T$ is a \Def{pseudo limit} of $T_\alpha$
iff $T_\alpha - T \sim T_\alpha - T_{\alpha+1}$
for all $\alpha < \lambda$.
A space is called \Def{pseudo complete}
if every pseudo Cauchy sequence has a pseudo limit.
The well based Hahn sequence spaces $\R[[\M]]$
are pseudo complete.  (Grid based spaces $\R\lbb\M\rbb$
are usually not pseudo complete.  Instead there
is a ``geometric convergence'' explained in \cite[\Wgeomconv]{edgarw}.)
But the transseries field $\T$, a proper subset of $\R[[\G]]$,
is not pseudo complete.

A pseudo limit is not expected to be unique, but in our setting there
is a distinguished pseudo limit.  It is the limit
(in the W topology) of $S_\beta$, where
$S_\beta$ is the longest common truncation of
$\SET{T_\alpha}{\alpha \ge \beta}$.
See the ``stationary limit'' in \cite{hoevenop}.

Here is a well-based fixed point theorem from van der Hoeven
\cite[Thm.~4.7]{hoevenop}.  Note that in our case where
$\M$ is totally ordered, the special ordering
$\;\fst\!\!\!\cdot\;$ coincides with the usual ordering $\;\fst\;$.

\begin{pr}\label{wellfixed}
Let $\Phi \takes \R[[\M]] \to \R[[\M]]$.  Assume
for all $T_1, T_2 \in \R[[\M]]$, if $T_1 \ne T_2$,
then $\Phi(T_1)-\Phi(T_2) \fst T_1-T_2$.
Then there is a unique $S \in \R[[\M]]$ such that $\Phi(S) = S$.
\end{pr}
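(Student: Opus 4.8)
The plan is a transfinite Banach-style iteration, leaning on the pseudo completeness of $\R[[\M]]$ recorded above. Uniqueness is immediate and I would dispose of it first: if $\Phi(S_1)=S_1$ and $\Phi(S_2)=S_2$ with $S_1\ne S_2$, then $S_1-S_2=\Phi(S_1)-\Phi(S_2)\fst S_1-S_2$, which is impossible, since no nonzero element of $\R[[\M]]$ is $\fst$-smaller than itself. For existence, fix a starting point, say $T_0=0$, set $T_{\alpha+1}=\Phi(T_\alpha)$ at successor stages, and — once the family $(T_\alpha)_{\alpha<\lambda}$ is known to be pseudo Cauchy — let $T_\lambda$ be a pseudo limit of it at a limit stage $\lambda$ (for definiteness, the distinguished stationary limit). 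If at some stage $\Phi(T_\alpha)=T_\alpha$ we halt, and that $T_\alpha$ is the fixed point.

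The core of the argument is a simultaneous transfinite induction establishing, for every pair $\alpha<\beta$ for which the iteration is still running, the two invariants: (I) $T_\alpha-T_\beta\sim T_\alpha-T_{\alpha+1}$; and (II) the assignment $\gamma\mapsto\mag(T_\gamma-T_{\gamma+1})$ is strictly $\fst$-decreasing. Granting (I) and (II), the family $(T_\alpha)_{\alpha<\lambda}$ is pseudo Cauchy: for $\alpha<\beta<\gamma$, (I) gives $T_\alpha-T_\beta\sim T_\alpha-T_{\alpha+1}$ and $T_\beta-T_\gamma\sim T_\beta-T_{\beta+1}$, while (II) gives $\mag(T_\beta-T_{\beta+1})\fst\mag(T_\alpha-T_{\alpha+1})$, so $T_\alpha-T_\beta\fgt T_\beta-T_\gamma$. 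Hence the pseudo limit $T_\lambda$ exists, and the new instances $T_\alpha-T_\lambda\sim T_\alpha-T_{\alpha+1}$ of (I) are precisely its defining property.

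The successor step is routine: $T_{\alpha+1}-T_{\alpha+2}=\Phi(T_\alpha)-\Phi(T_{\alpha+1})\fst T_\alpha-T_{\alpha+1}$ extends (II), and then in $T_\alpha-T_{\beta+1}=(T_\alpha-T_\beta)+(T_\beta-T_{\beta+1})$ the second summand is $\fst$ the first by (I) and (II), which extends (I). The step I expect to be the main obstacle is the limit stage: after inserting a pseudo limit $T_\lambda$ one must check that the next difference $T_\lambda-\Phi(T_\lambda)$ is still strictly $\mag$-below every $T_\alpha-T_{\alpha+1}$ with $\alpha<\lambda$, so that (II) survives. Here the bookkeeping blends the contraction hypothesis with the pseudo-limit relation: from $T_\alpha-T_\lambda\sim T_\alpha-T_{\alpha+1}$ one obtains $T_{\alpha+1}-T_\lambda\fst T_\alpha-T_{\alpha+1}$ by subtracting $\sim$-equivalent quantities; and $T_{\alpha+1}-\Phi(T_\lambda)=\Phi(T_\alpha)-\Phi(T_\lambda)\fst T_\alpha-T_\lambda$, whose magnitude equals $\mag(T_\alpha-T_{\alpha+1})$ by (I); writing $T_\lambda-\Phi(T_\lambda)=(T_\lambda-T_{\alpha+1})+(T_{\alpha+1}-\Phi(T_\lambda))$ then exhibits it as a sum of two terms each $\fst T_\alpha-T_{\alpha+1}$.

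Finally, invariant (II) forces the iteration to terminate: if it never halted, $\gamma\mapsto\mag(T_\gamma-T_{\gamma+1})$ would be a strictly decreasing, hence injective, map from a proper class of ordinals into the set $\M$ — impossible. So at some ordinal stage $\Phi(T_\alpha)=T_\alpha$, and $S:=T_\alpha$ is the desired fixed point, which completes the plan. (A cosmetic check to keep in mind: in each step one must know the relevant differences are nonzero before invoking the contraction hypothesis, but this is automatic, since a zero difference means we have already halted.)
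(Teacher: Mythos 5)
Your argument is correct and follows essentially the same route as the paper's (deliberately sketchy) proof: transfinite iteration of $\Phi$, pseudo limits at limit stages, and termination by a cardinality argument. You have in fact supplied the detail the paper's outline omits --- namely the verification, via the invariants $T_\alpha-T_\beta\sim T_\alpha-T_{\alpha+1}$ and the strict $\fst$-decrease of $\mag(T_\gamma-T_{\gamma+1})$, that the sequence really is pseudo Cauchy at each limit stage and that the contraction survives the insertion of a pseudo limit.
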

\begin{proof}
Uniqueness.  Assume $\Phi(S_1) = S_1$ and $\Phi(S_2) = S_2$.
If $S_1 \ne S_2$, then $\Phi(S_1) - \Phi(S_2) = S_1-S_2 \not\fst S_1-S_2$,
a contradiction.  So $S_1 = S_2$.

Existence (outline).  Choose any nonzero $T_0 \in \R[[\M]]$.
For ordinals $\alpha$
we define $T_\alpha$ recursively.  Assume  $T_\alpha$ has been
defined.  Consdier two cases. If
$\Phi(T_\alpha) = T_\alpha$, then $S=T_\alpha$ is
the required result.
Otherwise, let $T_{\alpha+1} = \Phi(T_\alpha)$.  If $\lambda$ is a limit
ordinal, and $T_\alpha$ has been defined for all $\alpha < \lambda$,
then (recursively) $T_\alpha$ is pseudo Cauchy, so let
$T_\lambda$ be a pseudo limit of $(T_\alpha)_{\alpha < \lambda}$.
Eventually the process must end because there are more ordinals
than elements of $\R[[\M]]$.
\end{proof}

Example. Consider $Q = x + \log x + \log_2 x + \log_3 x +\cdots$.
The partial sums constitute a pseudo Cauchy sequence
in $\T$, but
the pseudo limits (such as $Q$ itself) in $\R[[\G]]$
are not in $\T$.  This $Q$ is the solution of
$\Phi(Y) = Y$ where $\Phi(Y) = x + (Y\circ \log)$ is
contracting on $\R[[\G]]$.

\subsection*{Addition}
Addition $(S,T) \mapsto S+T$ is H-continuous.
Given $\m \in \G$, we have
$$
	\big(S + \o(\m)\big) + \big(T + \o(\m)\big) \subseteq
	\big(S+T\big) + \o(\m) .
$$
Addition is C-continuous.  Assume $S_j \ac S$, $T_j \ac T$.
There is $\AA = \GRID^{\ebmu,\bm}$
with $S,T,S_j,T_j \in \T_\AA$.
If $\g \in \AA$,
then for all but finitely many $j$ we have
$S_j[\g]=S[\g]$ and $T_j[\g] = T[\g]$, so that
$(S_j+T_j)[\g] = (S+T)[\g]$.  Thus $S_j+T_j \ac S+T$.
Addition is W-continuous: same proof, except that
$\AA$ is merely required to be well ordered.

\subsection*{Multiplication}
Multiplication $(S,T) \mapsto ST$ is H-continuous.
We have
$$
	\big(S + \o(\m)\big)\;\big(T + \o(\n)\big) \subseteq
	ST + \o\big((\mag S)\n + (\mag T)\m + \m\n\big),
$$
so given $S,T \in \T$ and $\g \in \G$, there exist
$\m, \n \in \G$ with $\big(S + \o(\m)\big)\;\big(T + \o(\n)\big) \subseteq
ST + \o(\g)$.

Multiplication is C-continuous \cite [\Emultcontin]{edgar}.
Let $S_i \ac S$, $T_i \ac T$.
There exist $\bmu, \bm$ so that
$S_i \overset{\ebmu,\bm}{\longrightarrow} S$ and
$T_i \overset{\ebmu,\bm}{\longrightarrow} T$.
Then there exist $\tbmu, \widetilde{\bm}$ with
$\GRID^{\ebmu,\bm} \cdot \GRID^{\ebmu,\bm}
\subseteq \GRID^{\tebmu,\widetilde{\bm}}$.
(In fact we may take $\tbmu = \bmu$ and $\widetilde{\bm} = 2\bm$.)
Now given any $\g \in \GRID^{\tebmu,\widetilde{\bm}}$,
there are finitely many pairs
$(\m,\n) \in \GRID^{\ebmu,\bm}\times\GRID^{\ebmu,\bm}$ with
$\m\n=\g$.  For each such $\m$ or $\n$, except for
finitely many indices $i$ we have $S_i[\m]=S[\m]$ and
$T_i[\n] = T[\n]$.  So, except for $i$
in a finite union of finite sets we have
$(S_iT_i)[\g] = (ST)[\g]$.  Therefore
$S_i T_i \ac ST$.

Multiplication is W-continuous.  This will be similar to C-continuity.
We need to use \cite[\Ewellprod]{edgar}:
Given any well ordered $\AA \subseteq \G$, the set
$\AA\cdot\AA$ is well ordered, and for any
$\g \in \AA \cdot\AA$, there are finitely many pairs
$(\m,\n) \in \AA \times \AA$ with $\m\n=\g$.

\subsection*{Differentiation}
First note
$$
	\big(T + \o(\n)\big)' \subseteq T' + \o(\n')\qquad
	\text{provided $\n \ne 1$.}
$$
Given any $\m \in \G$, there is $S \in \T$ with
$S' = \m$ by \cite[\Eintegral]{edgar}.
We may assume the constant term of $S$ is zero.
So let $\n = \mag(S)$, and then $\n' \sim S' = \m$ so
$$
	\big(T + \o(\n)\big)' \subseteq T' + \o(\m).
$$
In fact, since $\n$ did not depend on $T$, we have
shown that differentiation is H-uniformly continuous.

Now consider C-continuity.

From \cite[\Ederivexist]{edgar} or
\cite[\Wderivconvergence]{edgarw}: Given $\bmu, \bm$, there exist
$\tbmu,\widetilde{\bm}$ so that if
$T \in \T^{\ebmu,\bm}$ then $T' \in \T^{\tebmu,\widetilde{\bm}}$
and if $T_j \in \T^{\ebmu,\bm}$ with
$T_j \overset{\ebmu,\bm}{\longrightarrow} T$, then
$T'_j \overset{\tebmu,\widetilde{\bm}}{\longrightarrow} T'$.

W-continuity probably needs a proof like \cite[\Ederivexist]{edgar}.

The derivative is computed as H-limit:
From \ref{taylorlabel}$[\A_2]$ we have: for $U \fst \G_{N-1,M} \circ S$,
$$
	\frac{T(S+U)-T(S)}{U} - T'(S)\sim \frac{T''(S) U}{2} ,
$$
so in the H-topology
$$
	T'(S) = \lim_{U \to 0} \frac{T(S+U)-T(S)}{U} .
$$

\subsection*{Integration}

Integration is continuous?  This should be investigated.

\subsection*{Composition (Left)}
For a fixed (large positive) $S$, consider the composition function
$T \mapsto T \circ S$.

If $T_i \ac T$, then $T_i \circ S \ac T \circ S$
\cite[\Ecompcontin]{edgar}, which depends on \cite[\Ecompexist]{edgar}.

For W-continuity we need a proof like \cite[\Ecompexist]{edgar}.

Now consider H-continuity.  Note
$$
	\big(T + \o(\n)\big)\circ S \subseteq (T \circ S) + \o(\n \circ S) .
$$
So we need:  Given $\m \in \G$, there is $\n$ such that
$\n \circ S \fsteq \m$.  So we would have H-uniform continuity.
Certainly this is true, since we can take $\n = 1/\exp_N$ for large enough
$N$.  But what about a less drastic solution?  Of course:
$\n = \m \circ S^{[-1]}$.  Or if we insist that $\n$ be
a monomial, $\n = \mag\big(\m \circ S^{[-1]}\big)$.

\subsection*{Composition (Right)}
What about continuity of composition $T \circ S$ as a function of the right
composand $S$?  It is certainly false for C and W convergence.
Indeed, let $T = e^x$.  Then to
compute even one term of $e^S$ we need to know
all of the large terms of $S$; there could be infinitely many
large terms.

\medskip Now consider H-continuity.

\begin{pr}
{\rm(i)} Function $\exp$ is $\mathrm{H}$-continuous on $\T$.
{\rm(ii)}~Function $\log$ is $\mathrm{H}$-continuous on
{\rm(}the positive subset
of{\,\rm)} $\T$.
{\rm(iii)}~Let $T \in \T$.  Then function $S \mapsto T \circ S$
is $\mathrm{H}$-continuous on $\P$.
\end{pr}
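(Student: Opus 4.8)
The plan is to check, for each of the three maps, the $\varepsilon$--$\delta$ form of H-continuity recalled above: at a point $S_0$ of the relevant domain and for each target monomial $\m$, produce a (necessarily small) monomial $\n$ such that $S - S_0 \fst \n$ forces the displacement of values to be $\fst \m$. For \emph{(i)} write $S = S_0 + D$ and take the canonical additive decomposition $S_0 = L_0 + c_0 + V_0$ (\ref{C_add}), so that $\mag(e^{S_0}) = e^{L_0}$. Since $e^{S_0+D} = e^{S_0}e^D$ and $e^D - 1 \sim D$ for small nonzero $D$, we get $\exp S - \exp S_0 = e^{S_0}(e^D-1) \fe e^{S_0}D$; choosing a small monomial $\n$ with $\n \fst \m e^{-L_0}$ then gives $D \fst \n \Rightarrow e^{S_0}D \fst e^{S_0}\n \fe e^{L_0}\n \fst \m$, the case $D=0$ being trivial. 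For \emph{(ii)}, which is dual, put $\m_0 = \mag S_0$; for $D \fst S_0$ we have $\log S = \log S_0 + \log(1 + D/S_0)$ with $\log(1+D/S_0) \sim D/S_0$ for $D\ne0$, hence $\log S - \log S_0 \fe D/\m_0$, and a small monomial $\n \fst \min(\m_0,\m\,\m_0)$ does the job. (Near $S_0>0$ we have $S>0$, so $\log S$ is defined there.)

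For \emph{(iii)} I would fix $T$ and apply the Inductive Principle \ref{inductiveprinciple} to $\RR := \SET{T \in \T}{S \mapsto T\circ S \text{ is H-continuous on }\P}$. Constants and $x$ yield constant and identity maps, hence lie in $\RR$. If $A,B\in\RR$ then $(AB)\circ S = (A\circ S)(B\circ S)$ is the composite of the (H-continuous) map $S\mapsto(A\circ S, B\circ S)$ with multiplication $\T\times\T\to\T$, which is H-continuous, so $AB\in\RR$. If $A\in\RR$ then $(e^A)\circ S = e^{A\circ S}$ is H-continuous by composing with part (i), and $(A\circ\log)\circ S = A\circ(\log S)$ is H-continuous by composing with part (ii), using that $\log$ maps $\P$ into $\P$; so $e^A,\,A\circ\log\in\RR$.

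The remaining clause of \ref{inductiveprinciple} — strong summability — is where the real work lies. One must show: if $A_i\to0$ and each $S\mapsto A_i\circ S$ is H-continuous, then so is $S\mapsto\bigl(\sum_iA_i\bigr)\circ S = \sum_i(A_i\circ S)$. The intended argument splits the index set. Summability of $(A_i)$ should force all but finitely many $A_i$ to be so small that the support of $A_i\circ S$ lies entirely below $\m$ for \emph{every} $S$ in one fixed H-neighborhood of $S_0$ — using $A\fst\p\Rightarrow A\circ S\fst\p\circ S$ from \ref{ineq1}, the H-continuity of $\exp_k$ from iterating part (i), and the fact that the tail $\sum_{i\notin F}A_i$ shrinks as the finite set $F$ grows — so each tail term $A_i\circ S - A_i\circ S_0$ is $\fst\m$ and hence their sum is $\fst\m$; the finitely many exceptional indices are then handled by intersecting finitely many H-neighborhoods. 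I expect making this ``uniformity of the tail'' precise to be the main obstacle of the argument.

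There is an alternative, obstacle-free route to \emph{(iii)} that bypasses the inductive machinery at the cost of invoking the (deep) Taylor estimate $[\A_1]$ from \ref{taylorlabel}. If $T\in\R$ the map is constant; otherwise $T'\ne0$, and given $\m$ one applies $[\A_1]$ with $S_1=S_0$, $S_2=S$. Choose a small monomial $\n$ meeting the smallness hypothesis of $[\A_1]$ (namely $\n\fst S_0$ if $T$ has height $0$, and $\n\fst\G_{N-1,M}\circ S_0$ if $T\in\T_{N,M}$ with $N>0$) — such a monomial exists since a small monomial of sufficiently large height lies below the whole scale $\G_{N-1,M}\circ S_0$ by ``height wins'' (\ref{Wheightwins}, or \cite[\Eheightwins]{edgar} grid-based) — and also satisfying $\n\fst\m\,\mag(T'(S_0))^{-1}$. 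Then for $D=S-S_0$ nonzero and $\fst\n$ we have $T\circ S-T\circ S_0\sim T'(S_0)\,D$, so $T\circ S-T\circ S_0\fe T'(S_0)\,D\fst T'(S_0)\,\n\fe\mag(T'(S_0))\,\n\fst\m$; the case $D=0$ is trivial, and $S_0+\o(\n)\subseteq\P$ for $\n$ small.
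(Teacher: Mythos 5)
Your parts (i) and (ii) are essentially the paper's own computations. The problem is part (iii). Your primary route, via Proposition~\ref{inductiveprinciple}, breaks down at the summability clause exactly where you flag it, and the ``uniformity of the tail'' you hope for is not merely hard to make precise --- it is false as stated. Take $T=\sum_{i\ge1}x^{-i}$, $S_0=x$, and target $\m=e^{-x}$. No cofinite tail of the family $(x^{-i})$ consists of terms with $x^{-i}\circ S=S^{-i}\fst\m$ on an H-neighborhood of $S_0$ (indeed $S^{-i}\fe x^{-i}\fgt e^{-x}$ for every $i$ when $S\sim x$), and the tail sums $\sum_{i>n}x^{-i}\fe x^{-n-1}$ do not tend to $0$ in the H-topology, so no finite/cofinite split reduces matters to finitely many indices. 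What must be controlled is the \emph{difference} $A_i\circ S-A_i\circ S_0$ in terms of $S-S_0$, uniformly in $i$, not the size of $A_i\circ S$ itself. That is exactly what the paper does: it runs Corollary~\ref{inductivecor} rather than \ref{inductiveprinciple}, and for the sum clause (d$'$) factors $T=(T/\g_0)\cdot\g_0$ with $\g_0=\mag T$; since $T/\g_0\fe1\fst x$, Proposition~\ref{mvt1} applied to the whole normalized series at once gives $(T/\g_0)\circ S_2-(T/\g_0)\circ S_1\fst S_2-S_1$, i.e.\ uniform H-continuity of $S\mapsto(T/\g_0)\circ S$; H-continuity of multiplication and the inductive hypothesis for the single monomial $\g_0$ then finish. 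That one estimate is the idea your tail argument is missing.

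Your fallback route through [$\A_1$] (Proposition~\ref{tterm}) is sound: the auxiliary monomial $\n$ exists because $\G_{N-1,M}\circ S_0$ lies in some fixed $\T_{N',M'}$, so $1/\exp_{N'+1}$ sits below all of it by ``height wins''; there is no circularity, since \ref{tterm} is proved independently of this proposition; and $S_0+\o(\n)\subseteq\P$ is clear. But it is not cheaper than the paper's argument: Proposition~\ref{mvt1} is the special case $B=x$ of \ref{derivcompare} $=[\DD_1]$, whose proof occupies Section~\ref{involvedproof}, so both approaches lean on one of the ``involved'' Taylor-type statements --- yours trades $[\DD_1]$ for $[\A_1]$. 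If you keep the fallback as the actual proof, say so explicitly and drop the inductive route, since as written its key step cannot be repaired along the lines you propose.
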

\begin{proof}
(i)
Let $S_0 \in \T$ and $\m \in \G$ be
given.  Let
$$
	\n = \begin{cases}
	\m\mag(e^{-S_0}), &\text{if } \m\mag(e^{-S_0}) \fsteq 1 ,
	\\
	1, &\text{otherwise.}
	\end{cases}
$$
Now if $s := S - S_0 \fst \n$, we have $s \fst 1$ so
$e^s-1\sim s \fst \n$.  And
$$
	e^S - e^{S_0} = e^{S_0}(e^{S-S_0} - 1)
	\fst e^{S_0} \n \fsteq \m .
$$
That is: if $S \in S_0 + \o(\n)$, then
$e^S \in e^{S_0} + \o(\m)$.  This shows that $\exp$
is H-continuous at $S_0$.

(ii)~Let $S_0 > 0$ and $\m \in \G$ be given.  Then take
$$
	\n = \begin{cases}
	\m \mag S_0, &\text{if } \m \fsteq 1,
	\\
	\mag S_0, &\text{otherwise.}
	\end{cases}
$$
Now assume $S - S_0 \fst \n$.  Then
$$
	\frac{S-S_0}{S_0} \fst \frac{\n}{\mag S_0} \fsteq 1
$$
so
$$
	\log(S) - \log(S_0) = \log\frac{S}{S_0}
	=\log\left(1+\frac{S-S_0}{S_0}\right)
	\sim \frac{S-S_0}{S_0} \fst \frac{\n}{\mag S_0} \fsteq \m .
$$

(iii)
We will apply Corollary~\ref{inductivecor}.  Let
$\RR$ be the set of all $T \in \T$ such that the function
$S \mapsto T \circ S$ is H-continuous.  We now check
the conditions of Corollary~\ref{inductivecor}.
If $\g \in \RR$, then $\g \circ \log \in \RR$ by (ii);
this proves (f$\,'$).  If $L \in \RR$, then
$e^L \in \RR$ by (i).  And $x^b = e^{b\log x} \in \RR$
by (i) and (ii).  So $x^b e^L \in \RR$.
This proves (e$'$).

Finally we must prove (d$'$).
Let $T \in \T$ and assume $\supp T \subseteq \RR$.
(If $T=0$ we have $T \in \RR$ trivially, so assume $T \ne 0$.)
Let $\g_0 = \mag T$, so $\g_0 \in \RR$.  Note that
$T/\g_0 \fe 1 \fst x$.  By Proposition~\ref{mvt1} we have
$$
	\frac{T}{\g_0} \circ S_2 - \frac{T}{\g_0} \circ S_1
	\fst S_2 - S_1,
$$
so $S \mapsto (T/\g_0)\circ S$ is (uniformly) H-continuous.
By hypothesis, $S \mapsto \g_0 \circ S$ is H-continuous.
So (since multiplication is H-continuous) it follows that
the product
$$
	S \mapsto \bigg(\frac{T}{\g_0} \circ S\bigg) \cdot
	\bigg(\g_0 \circ S\bigg)
	= T \circ S
$$
is H-continuous.

So we may conclude $\RR = \T$ as required.
\end{proof}

\subsection*{Fixed Point}
Fixed point with parameter: conditions on
$\Phi(S,T)$ beyond ``contractive in $S$ for each $T$\,'' so that if
$S=S_T$ solves $S = \Phi(S,T)$, then $T \mapsto S_T$ is a
continuous function of $T$. Compare \cite{hoevenop}.
This should be investigated for all three topologies.

\section{Proof for the Simplest Taylor Theorem}\label{simpletaylor}
I said in Section~\ref{taylorsection} that proofs for
Taylor's Theorem are quite involved.  Here I include a
proof for the simplest one, namely \ref{taylorlabel}[{$\A_1$}].

\begin{pr}\label{tterm}
Let $T \in \T_{N,M}$, $T \not \in \R$, $S \in \P$, $U \in \T$.
If $N=0$, assume $U \fst S$.  If $N>0$, assume
$U \fst \G_{N-1,M}\circ S$.  Then
\begin{equation*}
	T(S+U) - T(S) \sim T'(S) \cdot U .
\tag{$\dagger$}
\end{equation*}
\end{pr}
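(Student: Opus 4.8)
The plan is to apply the Inductive Principle in the form of Corollary~\ref{inductivelog}. Let $\RR$ consist of the constants (for which $(\dagger)$ is the trivial $0=0$) together with every non-constant $T\in\T$ such that $(\dagger)$ holds for all $S\in\P$ and all $U$ meeting the smallness hypothesis attached to $T$ (according to its height and depth). If $\RR$ satisfies (b$''$), (d$''$), (e$''$), then $\RR=\T$, which is the proposition. As one proceeds, one must keep checking that the hypothesis ``$U\fst\G_{N-1,M}\circ S$'' (resp.\ ``$U\fst S$'' when $N=0$) attached to the largest object in play supplies the coarser comparisons used below --- chiefly $U\fst S$ and $U\fst(1/\mag L')\circ S$ --- and implies the hypotheses attached to the lower-height pieces; this is routine but must be tracked.

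For (b$''$) I would show $\l_n\in\RR$ by induction on $n$; for $\l_0=x$, $(\dagger)$ is trivial. For the step write $\l_{n+1}=\log\circ\,\l_n$, so with $W:=\l_n(S+U)-\l_n(S)$ one has $\l_{n+1}(S+U)-\l_{n+1}(S)=\log\!\big(1+W/\l_n(S)\big)$. By the inductive hypothesis $W\sim\l_n'(S)\,U$, and since $\l_n(S)/\l_n'(S)=(x\l_1\cdots\l_n)\circ S\fgteq S$ while $U\fst S$, we get $W\fst\l_n(S)$; hence the right-hand side is $\sim W/\l_n(S)\sim\l_n^\dagger(S)\,U=\l_{n+1}'(S)\,U$, using $\l_{n+1}'=1/(x\l_1\cdots\l_n)=\l_n^\dagger$.

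For (e$''$): given purely large $L\in\RR$, write $e^{L(S+U)}=e^{L(S)}\cdot e^{L(S+U)-L(S)}$. As $e^L$ has height one greater than $L$, its hypothesis implies the one for $L$, so $L(S+U)-L(S)\sim L'(S)\,U$; and as $\mag L'\in\G_{N-1,M}$, which is a group, the monomial $1/\mag(L'(S))=(1/\mag L')\circ S$ is beaten by $U$, giving $L'(S)\,U\fst1$ and hence $L(S+U)-L(S)\fst1$. Therefore $e^{L(S+U)-L(S)}-1\sim L(S+U)-L(S)\sim L'(S)\,U$, and $e^L(S+U)-e^L(S)=e^{L(S)}\big(e^{L(S+U)-L(S)}-1\big)\sim e^{L(S)}L'(S)\,U=(e^L)'(S)\,U$.

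The substantial step is (d$''$). Assume $\supp T\subseteq\RR$ and write $T=\sum_\g c_\g\g$. By strong additivity of composition and of differentiation, $T(S+U)-T(S)=\sum_\g c_\g\big(\g(S+U)-\g(S)\big)$ and $T'(S)\,U=\sum_\g c_\g\,\g'(S)\,U$, so with $E_\g:=\big(\g(S+U)-\g(S)\big)-\g'(S)\,U$ it suffices to show $\sum_\g c_\g E_\g\fst T'(S)\,U$. Dropping the monomial $1$ (it contributes $0$ on both sides), let $\g_0\neq1$ be the dominant monomial of $T$. The key asymptotic lemma is: if $\g\fst\g_0$ and $\g_0\neq1$ then $\mag(\g')\fst\mag(\g_0')$ --- writing $\g=\g_0 s$ with $s\fst1$ one has $\mag(\g')\fsteq\max\big(\mag(\g_0')\mag(s),\,\mag(\g_0)\mag(s')\big)$ with $\mag(s)\fst1$ and $\mag(s')\fst\mag(\g_0^\dagger)$, the last being an asymptotic comparison of the same ``height (and depth) wins'' flavour. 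Granting this, $\mag(T'(S))\fe\mag(\g_0'(S))$ (the lower terms cannot interfere), and each $E_\g\fst\mag(T'(S)\,U)$: for $\g=\g_0$ since $\g_0\in\RR$, and for $\g\fst\g_0$ since $\g'(S)\fst\g_0'(S)$ by the lemma together with Proposition~\ref{ineq1}. Thus $\supp E_\g\subseteq\o\big(\mag(T'(S)\,U)\big)$ for every $\g$, so the strongly convergent sum satisfies $\supp\big(\sum_\g c_\g E_\g\big)\subseteq\o\big(\mag(T'(S)\,U)\big)$, i.e.\ $\sum_\g c_\g E_\g\fst T'(S)\,U$, as required. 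I expect (d$''$) --- precisely the derivative-magnitude lemma and the passage from ``every error far-smaller'' to ``their sum far-smaller'' --- to be the only genuinely delicate point; the remaining steps reduce to the elementary expansions of $\log(1+z)$ and $e^z-1$, plus the hypothesis-tracking mentioned above.
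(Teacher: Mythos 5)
Your proposal is correct, and it reaches the result by a genuinely different organization of the same three local computations (the expansions of $\log(1+z)$, $(1+z)^b$ and $e^z$, plus term-by-term summation). The paper runs an explicit double induction on the pair $(N,M)$, with statements $\A(N,M)$ for monomials and $\B(N,M)$ for series, and advances the depth parameter by the substitution $S_1=\log S$, $U_1=\log(S+U)-\log(S)$, re-deriving the hypothesis $U_1\fst\G_{N-1,M}\circ S_1$ from $U\fst\G_{N-1,M+1}\circ S$ at each depth step. You instead delegate all of that bookkeeping to Corollary~\ref{inductivelog}, absorbing the iterated logarithms into condition (b$''$) and allowing the purely large $L$ in (e$''$) to have positive depth; this is cleaner, provided one checks (as you note) that the smallness hypothesis attached to the outer object implies the hypotheses for its constituents, and that $\mag(1/L')$ lies in the relevant $\G_{N-1,M}$ even when $L$ contains logarithms --- differentiation only introduces factors such as $x\l_1\cdots\l_{M-1}$, of height $0$, so this holds for $N\ge1$. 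The one ingredient you state that the paper does not is the derivative-magnitude lemma ($\g\fst\g_0$ with $\g_0\ne1$ implies $\g'\fst\g_0'$ for monomials); but the paper needs exactly the same fact in its step (5) (``same argument as (3)''), where the level-$0$ comparison $S^{b-1}\fst S^{b_0-1}$ for $b<b_0$ must be replaced by $\g'(S)\fst\g_0'(S)$. So your condition (d$''$) makes explicit, and proves uniformly over all of $\T$, what the paper does level by level; and your final passage from ``each $E_\g\fst\mag(T'(S)U)$'' to ``the sum is $\fst T'(S)U$'' is legitimate because the support of a strong sum is contained in the union of the supports of its terms, a fact the paper also uses tacitly when it ``sums all the terms of $T$''.
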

\begin{proof}
For $N,M \in \N$, let $\A(N,M)$ mean that
the statement of the theorem holds for all $T \in \G_{N,M}$,
and let $\B(N,M)$ mean that the
the statement of the theorem holds for all $T \in \T_{N,M}$.
Note for any $N,M \in \N$, from $U \fst \G_{N,M}\circ S$ it follows that
$U \fst S$: Indeed, $1 \in \G_{N,M}$, so
$U \fst 1 \fst S$.

(1) Claim: Let $S \in \P$, $U \in \T$, and assume
$U \fst S$.  Then
\begin{equation*}
	\log(S+U) - \log(S) \sim \frac{U}{S} .
\tag{$\dagger\log$}
\end{equation*}

Indeed, $U/S \fst 1$, so by the Maclaurin series for
$\log(1+z)$ we get
\begin{align*}
	\log(S+U) &= \log\left(S\left(1+\frac{U}{S}\right)\right)
	=
	\log(S) + \log\left(1+\frac{U}{S}\right)
	\\ &=
	\log(S) - \sum_{j=1}^\infty \frac{(-1)^j}{j}\left(\frac{U}{S}\right)^j
	= \log(S) +\frac{U}{S} + \o\left(\frac{U}{S}\right) .
\end{align*}

(2) $\A(0,0)$:  Let $b \in \R$, $b \ne 0$, $S \in \P$, $U \in \T$, and
assume $U \fst S$.  Then
\begin{equation*}
	(S+U)^b - S^b \sim b S^{b-1} \cdot U .
\tag{$\dagger \G_0$}
\end{equation*}

Now $U/S \fst 1$, so by Newton's binomial series we get
\begin{align*}
	(S+U)^b &= S^b \left(1+\frac{U}{S}\right)^b
	= S^b \sum_{j=0}^\infty \binom{b}{j}\left(\frac{U}{S}\right)^j
	\\ &=
	S^b\left(1+b\frac{U}{S} + \o\left(\frac{U}{S}\right)\right)
	= S^b + bS^{b-1}\cdot U + \o\left(S^{b-1}\cdot U\right) .
\end{align*}
Note that even if $b=0$ the equation
$(S+U)^b=S^b+bS^{b-1}U+\o(S^{b-1}U)$ remains true.

(3) $\B(0,0)$: Let $T \in \T_0$, $T \not\in \R$, $S \in \P$, $U \in \T$,
and assume $U \fst S$.  Then ($\dagger$).

Let $\dom T = a_0 x^{b_0}$.  First consider the case $b_0 \ne 0$.
Then $T' \sim a_0b_0x^{b_0-1}$ and
$$
	a_0(S+U)^{b_0} - a_0S^{b_0} = a_0b_0S^{b_0-1}\cdot U + \o(S^{b_0-1}\cdot U)
	= T'(S)\cdot U + \o(T'(S)\cdot U) .
$$
For any other term
$a x^b$ of $T$, we have $b<b_0$ and
$$
	a(S+U)^b - aS^b = abS^{b-1}\cdot U + \o(S^{b-1}\cdot U)
	= \o(S^{b_0-1}\cdot U) = \o(T'(S)\cdot U) .
$$
Summing all the terms of $T$, we get
$$
	T(S+U) - T(S) = T'(S)\cdot U + \o(T'(S)\cdot U) .
$$
Now take the case $b_0=0$.  Subtract the dominance: $T_1 = T-a_0$.
Since we assumed $T \not\in\R$, it follows that
$T_1 \ne 0$.  Also
$T' = T_1'$.  Applying the previous case to $T_1$, we get
\begin{align*}
	T(S+U)-T(S) &= 0+T_1(S+U)-T_1(S)
	= T_1'(S)\cdot U + \o(T_1'(S)\cdot U)
	\\ &= T'(S)\cdot U + \o(T'(S)\cdot U)  .
\end{align*}

(4) Let $N \ge 0$.  Claim: If $\B(N,0)$, then $\A(N+1,0)$.

Assume $\B(N,0)$.
Let $T \in \G_{N+1}$, $T \ne 1$.  Then $T = e^L$, where $L\ne 0$ is
purely large in $\R\lbb\G_N\cup\{\log x\}\rbb$.
Let $S \in \P$, and let $U \in \T$ with
$U \fst \G_{N}\circ S$.  Now in particular,
$U \fst \G_{N-1}\circ S$ if $N>0$ or $U \fst S$ if $N=0$,
so $L(S+U)-L(S) \sim L'(S)\cdot U$.  But also
$L' \in \T_N$ [noting that $(\log x)' = 1/x \in \T_N$] and
$L' \ne 0$, so $1/L' \in \T_N$ and thus
$\mag(1/L') \in \G_N$.  From the assumption
$U \fst \G_N\circ S$ we get $U \fst 1/L'(S)$,
so $L'(S)\cdot U \fst 1$.
So
$$
	U_1 := L(S+U)-L(S) \sim L'(S)\cdot U \fst 1.
$$
Therefore we may use the Maclaurin
series for $e^z$ to expand:
\begin{align*}
	T(S+U)-T(S) &=
	e^{L(S+U)}-e^{L(S)}
	= (e^{U_1}-1) e^{L(S)}
	=
	\big(U_1 + \o(U_1)\big) e^{L(S)}
	\\ &=
	\big(L'(S)\cdot U + \o(L'(S)\cdot U)\big) e^{L(S)}
	=
	T'(S)\cdot U + \o(T'(S)\cdot U) .
\end{align*}

(5) Let $N \ge 1$.  Claim:  If $\A(N,0)$ then $\B(N,0)$.

Same argument as (3).

(6) Let $M \in \N$.  Claim: If $\B(0,M)$ then
$\B(0,M+1)$.

Assume $\B(0,M)$.
Let $T \in \T_{0,M+1}$, $T \not\in \R$,
$S \in \P$, $U \in \T$, and assume
$U \fst S$.
Then $T = T_1 \circ \log$, with $T_1 \in \T_{0,M}$,
and $T'(x) = T_1'(\log x)/x$.  Now by (1), 
$$
	U_1 := \log(S+U) - \log(S) \sim \frac{U}{S} \fst 1 \fst \log S.
$$
Now applying $\B(0,M)$ to $T_1, S_1 = \log S, U_1$, we get
\begin{align*}
	T(S) - T(S+U) &=
	T_1(\log(S+U)) - T_1(\log S)
	=
	T_1(\log S + U_1) - T_1(\log S)
	\\ &=
	T_1(S_1+U_1) - T_1(S_1)
	\sim
	T_1'(S_1) \cdot U_1
	\\ &\sim
	T_1'(\log S) \cdot U/S
	= T'(S) \cdot U .
\end{align*}

(7) Let $N,M \in \N$, $N>0$.  Claim: If $\B(N,M)$ then $\B(N,M+1)$.

Assume $\B(N,M)$.
Let $T \in \T_{N,M+1}$, $T \not\in \R$,
$S \in \P$, $U \in \T$, and assume
$U \fst \G_{N-1,M+1}\circ S$.
Then $T = T_1 \circ \log$, with $T_1 \in \T_{N,M}$,
and $T'(x) = T_1'(\log x)/x$.  Now for any $N,M$ we have
$U \fst S$, so by (1), 
$$
	U_1 := \log(S+U) - \log(S) \sim \frac{U}{S} \fst 1.
$$
Now if we write $S_1 = \log S$, then
$$
	U_1 \sim \frac{U}{S} \fst U \fst
	\G_{N-1,M+1}\circ S = \G_{N-1,M} \circ S_1 .
$$
Applying $\B(N,M)$ to $T_1, S_1, U_1$, we get
\begin{align*}
	T(S) - T(S+U) &=
	T_1(\log(S+U)) - T_1(\log S)
	=
	T_1(\log S + U_1) - T_1(\log S)
	\\ &=
	T_1(S_1+U_1) - T_1(S_1)
	\sim
	T_1'(S_1) \cdot U_1
	\\ &\sim
	T_1'(\log S) \cdot U/S
	= T'(S) \cdot U .
\end{align*}

(8) By induction we have: $\B(N,M)$ for all $N,M$.
\end{proof}

The other cases \ref{taylorlabel}[{$\A_n$}] and [{$\A_\infty$}]
would be proved in the same way.
See \cite[Sect.~6.8]{DMM}, \cite[Prop.~5.11]{hoeven}.
The argument will perhaps use the formula for the $j$th derivative of a
composite function.

The condition $U \fst \G_{N-1,M}\circ S$ comes from
\cite[Sect.~6.8]{DMM}.  In \cite[Prop.~5.11]{hoeven} we can see
that in fact we do not need to use all of $\G_{N-1,M}$; in
the notation of \cite[\Wtsupp]{edgarw}, it suffices that $U \fst (1/\m)\circ S$
for all $\m \in \mathrm{tsupp}\; T$.

\section{Proof for Propositions \ref{posderiv}
and \ref{derivcompare}}\label{involvedproof}

\begin{de}
Let $\RR \subseteq \T$.  We say $\RR$ satisfies $\CC$
iff for all $T \in \RR$
and all $S_1, S_2 \in \P$ with $S_1 < S_2$,
\begin{align*}
   T' > 0 \Longrightarrow{}& T\circ S_1 < T \circ S_2,
\\ T' = 0 \Longrightarrow{}& T\circ S_1 = T \circ S_2,
\\ T' < 0 \Longrightarrow{}& T\circ S_1 > T\circ S_2 .
\end{align*}
We say $\RR$ satisfies $\DD$ iff for all $A, B \in \RR$,
and all $S_1, S_2 \in \P$ with $S_1 < S_2$,
if $A' \fst B'$, then
\begin{equation*}
	A \circ S_2 - A \circ S_1 \fst 
	B \circ S_2 - B \circ S_1 .
\end{equation*}
\end{de}

So Proposition \ref{posderiv} says $\T$ satisfies
$\CC$ and Proposition \ref{derivcompare} says $\T$ satisfies $\DD$.
These are what I attempt to prove next.
We will use notation
$\T_\AA = \SET{T \in \T}{\supp T \subseteq \AA}$.

\begin{re}\label{ppa}
Let $\RR \subseteq \T$.
$\RR$ satisfies $\CC$ iff $\{T\}$ satisfies $\CC$
for all $T \in \RR$.
$\RR$ satisfies $\DD$ iff $\{A,B\}$ satisfies $\DD$
for all $A,B \in \RR$.  If $\RR$ satisfies
$\CC$, then $\RR \cup \{1\}$ satisfies $\CC$.
If $\RR$ satisfies
$\DD$, then $\RR \cup \{1\}$ satisfies $\DD$.
\end{re}

\begin{lem}\label{ppb}
Let $\AA \subseteq \G$.  If $\AA$ satisfies
$\DD$, then $\T_\AA$ satisfies $\DD$.
\end{lem}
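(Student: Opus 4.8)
The plan is to deduce the many-term statement from the one-term statement, using that the operator $\Delta_1(T):=T\circ S_2-T\circ S_1$ is strongly additive (as noted for the $\Delta_n$ in Section~\ref{taylorsection}) together with a comparison of derivatives of monomials. By Remark~\ref{ppa} it suffices to fix $A,B\in\T_\AA$ and $S_1,S_2\in\P$ with $S_1<S_2$ and $A'\fst B'$, and to prove $\Delta_1(A)\fst\Delta_1(B)$. If $B$ is constant then $A'\fst B'=0$ is impossible, so that case is vacuous; if $A$ is constant then $\Delta_1(A)=0$ while $B$ is non-constant, and $0\fst\Delta_1(B)$ follows once $\Delta_1(B)\ne0$ is known (part of the Claim below). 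So assume $A,B$ are both non-constant and put $\m_A:=\mag A$, $\m_B:=\mag B$, which are monomials $\ne1$ lying in $\AA$.

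The heart of the matter is the following \emph{Claim}: for every non-constant $C\in\T$, writing $\g:=\mag C$, one has $C'\ne0$, $\ \mag(C')=\mag(\g')$, and $\ \Delta_1(C)\fe\Delta_1(\g)\ne0$. Granting the Claim, the lemma is immediate: from $A'\fst B'$ we get $\mag(A')\fst\mag(B')$, hence $\mag(\m_A')\fst\mag(\m_B')$, i.e.\ $\m_A'\fst\m_B'$; since $\m_A,\m_B\in\AA$ and $\AA$ satisfies $\DD$ in the two-element form of Remark~\ref{ppa}, this yields $\Delta_1(\m_A)\fst\Delta_1(\m_B)$; and applying the Claim to $A$ and to $B$ gives $\Delta_1(A)\fe\Delta_1(\m_A)\fst\Delta_1(\m_B)\fe\Delta_1(B)$, so $\Delta_1(A)\fst\Delta_1(B)$.

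To prove the Claim, expand $C=\sum_{\n\in\supp C}C[\n]\,\n$. Since differentiation and $\Delta_1$ are strongly additive, $C'=\sum_\n C[\n]\,\n'$ and $\Delta_1(C)=\sum_\n C[\n]\,\Delta_1(\n)$, the term $\n=1$ (if it occurs) contributing $0$ to both. For every other $\n\in\supp C$ we have $\n\fst\g$ with $\g=\mag C\ne1$, and here I invoke the auxiliary fact that \emph{for monomials $\fa\fst\fb$ with $\fb\ne1$ one has $\fa'\fst\fb'$}; together with the hypothesis that $\AA$ satisfies $\DD$, this gives $\Delta_1(\n)\fst\Delta_1(\g)$. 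Hence $\sum_{\n\ne\g}C[\n]\,\n'\fst\g'$ and $\sum_{\n\ne\g}C[\n]\,\Delta_1(\n)\fst\Delta_1(\g)$, so, since $C[\g]\ne0$, we get $\mag(C')=\mag(\g')$ (in particular $C'\ne0$) and $\Delta_1(C)-C[\g]\,\Delta_1(\g)\fst\Delta_1(\g)$. Finally $\Delta_1(\g)\ne0$: as $\g$ is a non-constant monomial, one of $\g,1/\g$ lies in $\P$, and $\P$ is a group under composition (Proposition~\ref{inverse}, or the grid-based analogue), so $S\mapsto\g\circ S$ is injective on $\P$ — using $(1/\g)\circ S=1/(\g\circ S)$ in the second case — whence $\Delta_1(\g)=\g\circ S_2-\g\circ S_1\ne0$ because $S_1\ne S_2$. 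Combining, $\Delta_1(C)\fe\Delta_1(\g)\ne0$.

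The one genuinely new ingredient, and the step I expect to be the main obstacle, is the monomial comparison \emph{$\fa\fst\fb$ in $\G$, $\fb\ne1$ $\Longrightarrow$ $\fa'\fst\fb'$}. I would prove it by writing $\fa/\fb=e^K$ with $K=\log\fa-\log\fb$ purely large and, since $e^K\fst1$, negative; then $\fa^\dagger-\fb^\dagger=K'$ and $\fa'/\fb'=(\fa/\fb)\bigl(1+K'/\fb^\dagger\bigr)$. When $K'/\fb^\dagger\fsteq1$ the second factor is $\fsteq1$ and we are done; when $K'/\fb^\dagger\fgt1$ one needs $K'\fst\fb^\dagger e^{-K}$, which holds because $e^{-K}$ has strictly larger height than $K'$ — an application of ``height wins'' \cite[\Eheightwins]{edgar} together with $e^{-K}\fgt -K$ (as in Proposition~\ref{explog2}) — the delicate point being to control heights when $\fb$ is a deep iterated logarithm, so that $\fb^\dagger$ is extremely small. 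This comparison is elementary, and should in any case be extractable from the derivative estimates of \cite{edgar}. If one prefers to avoid it here, the lemma can alternatively be obtained from a companion ``$\CC$-lifting'' statement together with the deduction of $\DD$ from $\CC$ already recorded (proof of Proposition~\ref{derivcompare} from Proposition~\ref{posderiv}), but that route relies on the same monomial facts.
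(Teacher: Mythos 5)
Your strategy is essentially the paper's own: exploit strong additivity of $T\mapsto T\circ S_2-T\circ S_1$ to reduce each series to its dominant monomial, compare the derivatives of those monomials, and then invoke $\DD$ for $\AA$. (The auxiliary monomial fact $\fa\fst\fb$, $\fb\ne 1$ $\Rightarrow$ $\fa'\fst\fb'$ that you single out as the main obstacle is indeed needed, but the paper uses it just as tacitly when it applies $\DD$ for $\AA$ to each term $\g\fst\g_0$; it is background from \cite{edgar}, not something this lemma must re-prove.) Two of your steps, however, do not survive as written. First, ``put $\m_A:=\mag A$, $\m_B:=\mag B$, which are monomials $\ne1$'' is false for a non-constant series whose dominant monomial is $1$: for $A=1+x^{-1}$ one has $\mag A=1$, $(\mag A)'=0$, and both $\mag(A')=\mag((\mag A)')$ and $\Delta_1(A)\fe\Delta_1(\mag A)$ fail, so your Claim is false as stated and its proof breaks (you would need $\DD$ for pairs $(\n,1)$ with $\n'\fst 0$, which never holds). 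The repair is exactly the paper's opening move, which you omitted: replace $A$ by $A-c$ and $B$ by $B-c$, which changes neither the hypothesis $A'\fst B'$ nor the conclusion $\Delta_1(A)\fst\Delta_1(B)$; after that the dominant monomial of the (now necessarily nonzero) $B$ is $\ne 1$ and your argument goes through with $\g$ taken to be the magnitude of the constant-free part.

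Second, your proof that $\Delta_1(\g)\ne0$ appeals to Proposition~\ref{inverse}. Within this paper that proposition rests on Proposition~\ref{inverse1}, hence on Proposition~\ref{mvt1}, hence on Proposition~\ref{derivcompare}---which is precisely the statement ($\T$ satisfies $\DD$) that Section~\ref{involvedproof}, and this lemma in particular, is meant to establish. So that step is circular as the development stands. It is also avoidable: the needed non-vanishing is exactly what Remark~\ref{ppa}'s assertion that $\AA\cup\{1\}$ satisfies $\DD$ gives when applied to the pair $(1,\g)$ (since $1'=0\fst\g'$ forces $0\fst\Delta_1(\g)$); alternatively one can prove injectivity of $S\mapsto\g\circ S$ directly by induction on height from Proposition~\ref{ineq}. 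With these two repairs your proof coincides in substance with the paper's, which fixes $\g_0=\dom B$ after removing constants, sums the relation $\Delta_1(\g)\fst\Delta_1(\g_0)$ over the terms of $A$ and of $B-a_0\g_0$, and leaves the non-vanishing of $\Delta_1(\g_0)$ implicit.
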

\begin{proof}
Assume $\AA$ satisfies $\DD$.
We may assume $1 \in \AA$. 
Let $A,B \in \T_\AA$ with $A'\fst B'$
and let $S_1, S_2 \in \P$ with $S_1 < S_2$.
If $B$ is replaced by $B-c$ and/or
$A$ is replaced by $A-c$, then both the
hypothesis  $A' \fst B'$ and the conclusion
$A \circ S_2 - A \circ S_1 \fst 
B \circ S_2 - B \circ S_1$ are unchanged.  So we
may assume $A,B$ have no constant terms.
This means $A \fst B$.
Let $\dom B = a_0\g_0$, $a_0 \in \R$, $a_0 \ne 0$, $\g_0 \in \AA$.
Then all terms of $A$ and all terms of $B$ except
for the single term $a_0\g_0$ are ${}\fst \g_0$.
Let $a\g$ be such a term, $a \in \R$, $\g \in \AA$.
Since $\AA$ satisfies $\DD$,
$$
	\g\circ S_2 - \g\circ S_1 \fst
	\g_0\circ S_2 - \g_0\circ S_1
$$
so
\begin{equation*}
	a\g\circ S_2 - a\g\circ S_1 \fst
	\g_0\circ S_2 - \g_0\circ S_1 .
\tag{1}
\end{equation*}
Summing (1) over all terms of $A$, we get
\begin{equation*}
	A\circ S_2 - A\circ S_1 \fst
	\g_0\circ S_2 - \g_0\circ S_1 .
\end{equation*}
Summing (1) over all terms of $B$ except the dominant term,
we get
\begin{equation*}
	B\circ S_2 - B\circ S_1 \asymp
	\g_0\circ S_2 - \g_0\circ S_1 .
\end{equation*}
Therefore, $A\circ S_2 - A\circ S_1 \fst
B\circ S_2 - B\circ S_1$, as required.
\end{proof}

\begin{lem}\label{ppc}
Let $\AA \subseteq \G$.  If $\AA$ satisfies
$\CC$ and $\DD$, then $\T_\AA$ satisfies $\CC$.
\end{lem}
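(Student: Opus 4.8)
The plan is to mirror the proof of Lemma~\ref{ppb}, using that lemma as the engine that makes $\DD$ available on all of $\T_\AA$. First I would record the reductions. By Lemma~\ref{ppb}, $\T_\AA$ satisfies $\DD$; by Remark~\ref{ppa} it suffices to show that $\{T\}$ satisfies $\CC$ for each $T\in\T_\AA$, and we may assume $1\in\AA$. Fix $T\in\T_\AA$ and $S_1<S_2$ in $\P$. Replacing $T$ by $-T$ interchanges the cases $T'>0$ and $T'<0$ and leaves the case $T'=0$ unchanged, so it is enough to handle $T'=0$ and $T'>0$. If $T'=0$ then $T\in\R$ (the kernel of $\partial$ is the field of constants), so $T\circ S_1=T=T\circ S_2$ and there is nothing to prove.

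So assume $T'>0$; I must show $T\circ S_2-T\circ S_1>0$. Strip the constant summand: write $T=c+U$ with $c\in\R$ and $U$ having no constant term (the $L+V$ part of the canonical additive decomposition, Proposition~\ref{C_add}), so $T'=U'$; in particular $U'>0$, hence $U\ne0$. Let $a_0\g_0=\dom U$, with $a_0\in\R\setminus\{0\}$ and $\g_0\in\AA$; since $U$ has no constant term, $\g_0\ne1$, hence $\g_0\notin\R$ and $\g_0'\ne0$. Put $A:=U-a_0\g_0\in\T_\AA$, so every monomial of $A$ is ${}\fst\g_0$. The one substantive input is the basic fact that $\partial$ carries $\o(\g_0)$ into $\o(\g_0')$ when $\g_0\ne1$ (see \cite{edgar}, or the differentiation remarks); applied to $A$ it gives $A'\fst\g_0'$, so $U'=a_0\g_0'+A'\sim a_0\g_0'$. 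Since $U'=T'>0$, this forces $a_0\g_0'>0$, and in particular fixes the sign of the dominant term of $U'$.

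Now I would finish exactly in the style of Lemma~\ref{ppb}. Since $\AA$ satisfies $\CC$, applying it to the monomial $\g_0$ (with $\g_0'\ne0$) shows that $\g_0\circ S_2-\g_0\circ S_1$ has the same sign as $\g_0'$; hence $a_0(\g_0\circ S_2-\g_0\circ S_1)$ has the sign of $a_0\g_0'$, i.e.\ it is $>0$ (so in particular nonzero). Since $\T_\AA$ satisfies $\DD$ and $A'\fst\g_0'$ with $A,\g_0\in\T_\AA$, we get $A\circ S_2-A\circ S_1\fst\g_0\circ S_2-\g_0\circ S_1$. Therefore, using additivity of $V\mapsto V\circ S_i$,
\[
 T\circ S_2-T\circ S_1 \;=\; U\circ S_2-U\circ S_1 \;=\; a_0(\g_0\circ S_2-\g_0\circ S_1)+(A\circ S_2-A\circ S_1)\;\sim\; a_0(\g_0\circ S_2-\g_0\circ S_1)\;>\;0 ,
\]
so $T\circ S_1<T\circ S_2$, which is what the case $T'>0$ requires.

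I do not expect a genuine obstacle here: once $\DD$ is available on $\T_\AA$ through Lemma~\ref{ppb}, the argument is bookkeeping parallel to that lemma. The only non-formal ingredient is the derivative-asymptotics fact $\o(\g_0)'\subseteq\o(\g_0')$ for $\g_0\ne1$, whose role is precisely to rule out cancellation in $T'=a_0\g_0'+A'$ and thereby transfer the hypothesis on $T'$ to the sign of the single monomial $\g_0'$, which is what the monomial case of $\CC$ can then consume.
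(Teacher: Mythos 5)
Your proof is correct and follows essentially the same route as the paper's: strip the constant term, reduce by sign to $T'>0$, apply $\CC$ of $\AA$ to the dominant monomial $\g_0$, and use $\DD$ to show the remaining terms contribute something far smaller, so the difference $T\circ S_2-T\circ S_1$ is asymptotic to $a_0(\g_0\circ S_2-\g_0\circ S_1)>0$. The only (cosmetic) difference is that you invoke Lemma~\ref{ppb} to apply $\DD$ once to the whole tail, whereas the paper applies $\DD$ of $\AA$ term by term and sums.
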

\begin{proof}
Assume $\AA$ satisfies $\CC$ and $\DD$.
We may assume $1 \in \AA$.
Let $T \in \T_\AA$ and let $S_1, S_2 \in \P$
with $S_1 < S_2$.  Since we may replace
$T$ by $T-c$, we may assume $T$ has no constant term.
Let $\dom T = a_0 \g_0$.  Then $T' \sim a_0 \g_0'$,
$\g_0'\ne 0$, so $T'$ has the same sign as $a_0\g_0'$.
We may replace $T$ by $-T$, so it suffices to consider the case
$T'>0$.  Now $\g_0 \in \AA$, which satisfies $\CC$, so
$a_0 \g_0 \circ S_1 < a_0\g_0\circ S_2$.  For all terms
$a\g$ of $T$ other than $a_0\g_0$, we have
$a\g\circ S_2 - a\g\circ S_1 \fst
a_0\g_0\circ S_2 - a_0\g_0\circ S_1$
since $\AA$ satisfies $\DD$.  Summing these terms, we get
$T\circ S_2 - T\circ S_1 \sim
a_0\g_0\circ S_2 - a_0\g_0\circ S_1 > 0$, so
$T\circ S_2 - T\circ S_1>0$ as required.
\end{proof}

\begin{lem}\label{ppd}
$\G_0 \cup \{\log x\}$ satisfies $\CC$.
\end{lem}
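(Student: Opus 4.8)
The plan is to notice that $\G_0\cup\{\log x\}$ consists of just two kinds of elements --- the powers $x^b$ with $b\in\R$, and the single extra monomial $\log x$ --- and that for each of them property $\CC$ is essentially a verbatim restatement of Proposition~\ref{ineq}. So fix $S_1,S_2\in\P$ with $S_1<S_2$; since $\P=\LP$, Proposition~\ref{ineq} applies to this pair, and it remains only to match each clause of $\CC$ to the correct part of that proposition.

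First I would treat $T=x^b$. Here $T\circ S_i=S_i^b$ and $T'=b\,x^{b-1}$, which has the same sign as $b$ because $x^{b-1}>0$ for every $b$. If $b>0$, then $T'>0$ and Proposition~\ref{ineq}(a) gives $S_1^b<S_2^b$, i.e. $T\circ S_1<T\circ S_2$; if $b<0$, then $T'<0$ and Proposition~\ref{ineq}(b) gives $S_1^b>S_2^b$, i.e. $T\circ S_1>T\circ S_2$; if $b=0$, then $T=1$, so $T'=0$ and $T\circ S_1=1=T\circ S_2$. Thus all three clauses of $\CC$ hold for $T=x^b$. Next I would treat $T=\log x$: here $T'=1/x>0$ and $T\circ S_i=\log S_i$, so the relevant clause of $\CC$ asks exactly for $\log S_1<\log S_2$, which is Proposition~\ref{ineq}(c).

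I do not expect any real obstacle here. The only points requiring a moment's care are that the sign of the derivative be paired with the right case of Proposition~\ref{ineq}, and that no circularity arises: Proposition~\ref{ineq} was proved directly from the canonical multiplicative and additive decompositions together with the binomial and exponential series, and uses neither $\CC$ nor Proposition~\ref{posderiv}, so it is legitimate to invoke it as the base case of the present induction.
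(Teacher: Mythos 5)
Your proof is correct and is exactly the paper's argument: the paper's entire proof of this lemma is the citation ``This is Proposition~\ref{ineq} (a)(b)(c),'' and you have simply spelled out the case-matching (sign of $b$ for $x^b$, the trivial case $b=0$, and part (c) for $\log x$) that the paper leaves implicit. The remark on non-circularity is a sensible, if unneeded, addition.
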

\begin{proof}
This is Proposition~\ref{ineq} (a)(b)(c).
\end{proof}

\begin{lem}\label{ppl}
Let $\AA \subseteq \G$.  If $\AA$ satisfies $\CC$,
then $\AA \cup \{\log\}$ satisfies $\CC$
\end{lem}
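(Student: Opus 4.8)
The plan is to push the whole statement down to the single monomial $\log x$ by means of Remark~\ref{ppa}, after which nothing remains but to quote Proposition~\ref{ineq}(c).

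First I would invoke Remark~\ref{ppa}: a subset $\RR \subseteq \T$ satisfies $\CC$ if and only if $\{T\}$ satisfies $\CC$ for every $T \in \RR$. Applying this with $\RR = \AA \cup \{\log\}$ (reading $\log$ as the monomial $\log x$): for $T \in \AA$ the singleton $\{T\}$ satisfies $\CC$ by the hypothesis on $\AA$, so the only thing left to check is that $\{\log x\}$ satisfies $\CC$. Since $(\log x)' = 1/x > 0$, of the three implications defining $\CC$ only the first applies (the others have false hypotheses), and it asks precisely that $S_1 < S_2$ in $\P$ imply $\log \circ S_1 < \log \circ S_2$, i.e.\ $\log S_1 < \log S_2$ --- which is Proposition~\ref{ineq}(c) verbatim. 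Hence $\{\log x\}$ satisfies $\CC$, and so $\AA \cup \{\log\}$ satisfies $\CC$, again by Remark~\ref{ppa}. (This is, for an arbitrary $\AA$ already satisfying $\CC$, the analogue of the ``$\cup\,\{\log x\}$'' half of Lemma~\ref{ppd}.)

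I expect no real obstacle here: all of the substance is carried by Proposition~\ref{ineq}(c), while Remark~\ref{ppa} makes adjoining one further, completely understood monomial cost nothing. The only point worth a second glance is that the inner composands $S_1, S_2$ must lie in $\P$ for $\log \circ S_i$ to be defined, and this is already part of the hypothesis of $\CC$; no extra positivity check is needed for $\log x$ itself. If one instead intends $\AA \cup \{\log\}$ to mean $\AA$ closed under right composition with $\log$, the identical argument handles each $\g \circ \log$ with $\g \in \AA$: use the chain rule $(\g\circ\log)' = (\g'\circ\log)/x$ and Proposition~\ref{ineq1} to track the sign of the derivative, and use the (routine) fact that $\log$ carries $\P$ into $\P$.
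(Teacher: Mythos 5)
Your proposal is correct and is essentially the paper's own proof: the paper likewise reduces to singletons via Remark~\ref{ppa} and cites Lemma~\ref{ppd} (itself just Proposition~\ref{ineq}(a)(b)(c)) for the fact that $\{\log\}$ satisfies $\CC$, whereas you inline that step by quoting Proposition~\ref{ineq}(c) directly. No gap; the intended reading of $\AA\cup\{\log\}$ is indeed adjoining the single element $\log x$, so your main argument is the right one.
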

\begin{proof}
As noted in Lemma~\ref{ppd}, $\{\log\}$ satisfies $\CC$.
Apply Remark~\ref{ppa}.
\end{proof}

\begin{lem}\label{ppe}
$\G_0 \cup \{\log x\}$ satisfies $\DD$.
\end{lem}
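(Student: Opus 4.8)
The plan is to reduce $\DD$ for this small set, pair by pair, to the mean-value inequality of Proposition~\ref{mvt1} ($C\fst x\Rightarrow C\circ S_2-C\circ S_1\fst S_2-S_1$), with a single residual case done by hand. By Remark~\ref{ppa} it suffices to verify that $\{A,B\}$ satisfies $\DD$ for each pair $A,B\in\G_0\cup\{\log x\}$; such an $A$ or $B$ is a power $x^c$ ($c\in\R$) or $\log x$. So fix $A,B$ and $S_1<S_2$ in $\P$, and assume $A'\fst B'$ (otherwise there is nothing to prove). If $A'=0$, then $A$ is a constant and $A\circ S_2-A\circ S_1=0$; since $A'\fst B'$ forces $B'\ne0$, the map $B$ is strictly monotone by Proposition~\ref{ineq}(a)--(c), so $B\circ S_1\ne B\circ S_2$ and $0\fst B\circ S_2-B\circ S_1$ as required. ($B'=0$ is impossible, as $A'\fst0$ never holds.) A quick computation of $A'$, $B'$ and comparison of leading monomials shows that the remaining pairs with $A'\fst B'$ are exactly: (1) $A=x^a$, $B=x^b$ with $a<b$; (2) $A=\log x$, $B=x^b$ with $b>0$; (3) $A=x^a$ with $a<0$, $B=\log x$ (the pair $A=B=\log x$ being excluded, since then $A'=B'$).

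Next I would dispose of the cases in which $B$ is a positive power or a logarithm. If $B=x^b$ with $b>0$, set $S_i'=S_i^b$; then $S_i'\in\P$, with $S_1'<S_2'$ by Proposition~\ref{ineq}(a), and $B\circ S_i=S_i'$. In case~(1) with $b>0$, $A\circ S_i=S_i^a=(S_i')^{a/b}=x^{a/b}\circ S_i'$ with $a/b<1$, so $x^{a/b}\fst x$ and Proposition~\ref{mvt1} gives $A\circ S_2-A\circ S_1\fst S_2'-S_1'=B\circ S_2-B\circ S_1$. In case~(2), $A\circ S_i=\log S_i=\tfrac1b\log S_i'=\tfrac1b\bigl(\log x\circ S_i'\bigr)$ and $\log x\fst x$, so Proposition~\ref{mvt1} applies again. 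In case~(3), set instead $S_i'=\log S_i$; since $S_i\in\P$ we have $S_i'\in\P$ with $S_1'<S_2'$ by Proposition~\ref{ineq}(c), and $B\circ S_i=S_i'$, while $A\circ S_i=S_i^a=e^{a\log S_i}=e^{ax}\circ S_i'$; because $a<0$ the monomial $e^{ax}$ is small, hence $e^{ax}\fst x$, and Proposition~\ref{mvt1} finishes this case.

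The one case that escapes this trick is case~(1) with $a<b<0$: now $x^b$ maps $\P$ into the small positive transseries, so no power substitution lands in $\P$. Here I would compute directly. Put $r=S_1/S_2$, so $0<r<1$ and
\begin{equation*}
	\frac{S_2^a-S_1^a}{S_2^b-S_1^b}=S_2^{a-b}\cdot\frac{1-r^a}{1-r^b},
\end{equation*}
with $1-r^a\ne0\ne1-r^b$ since $r\ne1$ and $a,b\ne0$. I would then split on the behaviour of $r$: either $r$ is small, or $r\sim c$ for a real $c$ with $0<c<1$, or $1-r$ is small. In each subcase $\mag(1-r^a)$ and $\mag(1-r^b)$ are easily read off (they are $1$, or $(\mag r)^a$ and $(\mag r)^b$, or $\mag(1-r)$, respectively), and in every subcase one checks $\dfrac{1-r^a}{1-r^b}\fst S_2^{b-a}$, so that multiplying by $S_2^{a-b}$ yields a quantity $\fst1$, i.e.\ $S_2^a-S_1^a\fst S_2^b-S_1^b$. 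This computation in fact also covers case~(1) with $b>0$, so if one prefers, the entire $\G_0\times\G_0$ part can be routed through it, keeping the substitution argument only for cases~(2) and~(3).

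I expect case~(1) with $a<b<0$ to be the real obstacle. It is the one place where the reduction to Proposition~\ref{mvt1} breaks down and explicit magnitude bookkeeping with $r$ cannot be avoided: a purely qualitative relation such as $S_2^{-b}-S_1^{-b}\fst S_2^{-a}-S_1^{-a}$ (which does follow from the $b>0$ case, applied with exponents $-b<-a$, $-a>0$) is not quantitative enough to push back through the reciprocal identity $S_2^a-S_1^a=-(S_2^{-a}-S_1^{-a})/(S_1S_2)^{-a}$. Apart from that, the proof uses only routine asymptotics — $e^{ax}\fst1$ for $a<0$, $\log S\in\P$ for $S\in\P$, and the strict monotonicity of powers and of $\log$ from Proposition~\ref{ineq} — all immediate from results already in hand.
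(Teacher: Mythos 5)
Your central device --- reducing each pair to Proposition~\ref{mvt1} by a substitution $S_i \mapsto S_i^b$ or $S_i \mapsto \log S_i$ --- is circular in the context of this lemma. Proposition~\ref{mvt1} is proved in the paper by the single line ``apply Proposition~\ref{derivcompare}'', and Proposition~\ref{derivcompare} is exactly the statement that $\T$ satisfies $\DD$, whose from-scratch proof is the whole point of Section~\ref{involvedproof}; Lemma~\ref{ppe} is one of the base cases of that induction (via Lemmas~\ref{pph}, \ref{ppg} and Proposition~\ref{ppi}). So you may not invoke \ref{mvt1} here. The circularity infects cases (2) and (3) entirely, and case (3) is doubly problematic: rewriting $x^a\circ S_i$ as $e^{ax}\circ\log S_i$ and applying the mean-value inequality to the height-one monomial $e^{ax}$ runs the induction backwards, since $\DD$ for $\G_1$ is only established later (Lemma~\ref{ppg}) using $\DD$ for $\G_0\cup\{\log x\}$ as input.

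The salvageable part is your direct computation with $r=S_1/S_2$ for the pure-power case, which (after the three-way split on $r$ small, $r\sim c\in(0,1)$, $r\sim1$) is essentially the paper's own argument in multiplicative rather than additive coordinates --- the paper writes $S_2=S_1+U$ and splits on $U\fst S_1$, $U\asymp S_1$, $U\fgt S_1$. You correctly note this covers all of $\G_0\times\G_0$, including the case $a<b<0$ you flagged as the obstacle (which, incidentally, the paper handles with no special difficulty: when $U\fgt S_1$ and $0>b>a$ one has $S_1^b-S_2^b\sim S_1^b\fgt S_1^a\sim S_1^a-S_2^a$). What is missing is the same explicit magnitude bookkeeping for the pairs involving $\log x$: one must compare $\log S_2-\log S_1=\log(S_2/S_1)$ directly with $S_2^b-S_1^b$ in each of the three regimes (it is $\sim U/S_1$, $\sim\log(1+c)\asymp1$, or $\fgteq\log S_2$ respectively), exactly as the paper does. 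Until cases (2) and (3) are reworked along these lines, the proof is incomplete.
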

\begin{proof}
Let $A, B \in \G_0 \cup \{\log x\}$ with
$A' \fst B'$ and let $S_1, S_2 \in \P$ with
$S_1 < S_2$.  [Since $B=1$ is impossible and
$A=1$ is clear, assume both are not $1$.]
First consider $A = x^a, B = x^b$, so
$A' \fst B'$ means $a<b$.  We must show
$S_2^a-S_1^a \fst S_2^b-S_1^b$.  Write
$S_2=S_1+U$, $U>0$, and consider three cases:
$U \fst S_1$, $U \asymp S_1$, $U \fgt S_1$.

Case $U \fst S_1$.  Then $U/S_1 \fst 1$ and
$$
	S_2^b-S_1^b
	= S_1^b\left[\left(1+\frac{U}{S_1}\right)^b-1\right]
	\sim S_1^b\left[\left(1+\frac{bU}{S_1}\right)-1\right]
	= bS_1^{b-1}U \asymp S_1^{b-1}U .
$$
So $S_2^b-S_1^b \asymp S_1^{b-1}U \fgt S_1^{a-1}U \asymp
S_2^a-S_1^a$.

Case $U \asymp S_1$.  Say $U/S_1 \sim c$, $c \in \R$, $c>0$.
Note $(1+c)^b-1$ is a nonzero constant, so
$$
	S_2^b-S_1^b =
	S_1^b\left[\left(1+\frac{U}{S_1}\right)^b-1\right]
	\sim S_1^b\left[\left(1+c\right)^b-1\right]
	\asymp S_1^b .
$$
So $S_2^b-S_1^b \asymp S_1^b \fgt S_1^a \asymp S_2^a-S_1^a$.

Case $U \fgt S_1$.  Then $S_2 = S_1+U \sim U \fgt S_1$.
If $b>0$, then $S_1^b \fst S_2^b$, so $S_2^b-S_1^b \sim S_2^b$.
But if $b<0$, then $S_1^b \fgt S_2^b$, so
$S_2^b-S_1^b \sim -S_1^b$.  So we may compute:
\begin{align*}
	\text{if $b>a>0$, then }& S_2^b-S^1_b \sim S_2^b
	\fgt S_2^a \sim S_2^a-S_1^a ,
	\\
	\text{if $b>0>a$, then }& S_2^b-S_1^b \sim S_2^b \fgt 1 \fgt
	S_1^a\sim S_1^a-S_2^a ,
	\\
	\text{if $0>b>a$, then }& S_1^b-S_2^b \sim S_1^b \fgt S_1^a
	\sim S_1^a-S_2^a .
\end{align*}

This completes the proof for $x^a \fst x^b$.
The computations for $\log x \fst x^b$ or
$x^a \fst \log x$ are next.

Case $U \fst S_1$.  Then
$$
	\frac{S_2}{S_1} = \frac{S_1+U}{S_1} = 1+\frac{U}{S_1},\qquad
	\log(S_2)-\log(S_1) = \log\frac{S_2}{S_1} \sim \frac{U}{S_1} .
$$
If $b>0$ then $S_2^b-S_1^b \asymp S_1^{b-1} U \fgt U/S_1
\sim \log(S_2)-\log(S_1)$.  And if
$a<0$ then $S_1^a-S_2^a \asymp S_1^{a-1}U \fst U/S_1
\sim \log(S_2)-\log(S_1)$.

Case $U \asymp S_1$.  Then $U/S_1 \sim c$ so
$$
	\log(S_2)-\log(S_1) = \log\left(1+\frac{U}{S_1}\right)
	\sim \log(1+c) \asymp 1 .
$$
If $b>0$, then $S_2^b-S_1^b \asymp S_1^b \fgt 1 \asymp \log(S_2)-\log(S_1)$.
If $a<0$, then $S_1^a-S_2^a \asymp S_1^a \fst 1 \asymp \log(S_2)-\log(S_1)$.

Case $U \fgt S_1$.  Then $S_2/S_1 \fgt 1$ so
$\log(S_2)-\log(S_1) \fgteq \log(S_2)$.  If $b>0$, then
$S_2^b-S_1^b \asymp S_2^b \fgt \log(S_2) \fgteq \log(S_2)-\log(S_1)$.
If $a<0$, then
$S_1^a-S_2^a \asymp S_1^a \fst 1 \fsteq \log(S_2/S_1)
= \log(S_2)-\log(S_1)$.
\end{proof}

\begin{lem}\label{pph}
Suppose $\G_0 \subseteq \AA \subseteq \G_\bullet$
and $\AA$ satisfies $\DD$.  Then
$\AA \cup \{\log x\}$ satisfies $\DD$.
\end{lem}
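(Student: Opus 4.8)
The plan is to reduce via Remark~\ref{ppa} to checking that $\{A,\log x\}$ satisfies $\DD$ for each $A \in \AA$ (the pairs inside $\AA$ are covered by the hypothesis, and the singleton $\{\log x\}$ is vacuous since $(\log x)'\fst(\log x)'$ never holds). Fix $A\in\AA$ and $S_1<S_2$ in $\P$. First I would dispose of the degenerate case $A=1$: then $\{1,\log x\}\subseteq\G_0\cup\{\log x\}$, which satisfies $\DD$ by Lemma~\ref{ppe}. So assume $A\ne1$, whence $A'\ne0$; moreover $A'$ is a \emph{log-free} transseries (log-free transseries being closed under differentiation, \cite{edgar}), so $\m := \mag(A')$ is a log-free monomial. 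The only instances of $\DD$ that need proof are the two in which the pair $\{A,\log x\}$ satisfies the strict relation $A'\fst(\log x)'=1/x$ or $(\log x)'\fst A'$; equivalently $\m\fst x^{-1}$ or $\m\fgt x^{-1}$. (If $\m\fe x^{-1}$, the hypothesis of $\DD$ is never met, so there is nothing to prove.)

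The heart of the matter is a sandwiching step: in either case I would slip a pure power $x^b$ between $A$ and $\log x$ in the sense of derivative comparison. The key structural fact is that a log-free monomial is either a power $x^c$, or dominates every power, or is dominated by every power --- this is ``height wins'' (Proposition~\ref{Wheightwins}, or \cite[\Eheightwins]{edgar}) applied to $\m$ against the powers $x^c\in\G_0$. Hence, if $\m\fst x^{-1}$ I can choose $b\in\R$ with $\m\fst x^{b-1}\fst x^{-1}$ (take $b-1=(c-1)/2$ if $\m=x^c$, necessarily with $c<-1$; take $b-1=-2$ if $\m$ is dominated by every power); and if $\m\fgt x^{-1}$ I can choose $b$ with $x^{-1}\fst x^{b-1}\fst\m$ (take $b-1=(c-1)/2$ if $\m=x^c$, with $c>-1$; take $b-1=1$ if $\m$ dominates every power). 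Since $(x^b)'\fe x^{b-1}$, the first choice gives $A'\fst(x^b)'\fst(\log x)'$ and the second gives $(\log x)'\fst(x^b)'\fst A'$.

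Finally I would chain two applications of $\DD$, using transitivity of $\fst$. Because $\G_0\subseteq\AA$, the pair $\{A,x^b\}$ lies in $\AA$, which satisfies $\DD$ by hypothesis; and $\{x^b,\log x\}$ lies in $\G_0\cup\{\log x\}$, which satisfies $\DD$ by Lemma~\ref{ppe}. In the case $\m\fst x^{-1}$ these yield $A\circ S_2-A\circ S_1\fst S_2^b-S_1^b$ and $S_2^b-S_1^b\fst \log S_2-\log S_1$, hence $A\circ S_2-A\circ S_1\fst(\log x)\circ S_2-(\log x)\circ S_1$; the case $\m\fgt x^{-1}$ is symmetric. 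I expect the only genuine obstacle to be the sandwiching step: one must know that $\mag(A')$ is log-free so that a power can be inserted \emph{strictly} between it and $x^{-1}$ --- this would fail for a general monomial such as $x^{-1}/\log x$, which is precisely why the hypothesis $\AA\subseteq\G_\bullet$ is needed.
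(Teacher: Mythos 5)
Your proposal is correct and follows essentially the same route as the paper's proof: both reduce to the mixed pairs $\{A,\log x\}$ and then sandwich a power $x^b\in\G_0\subseteq\AA$ between the two, chaining the $\DD$-hypothesis on $\AA$ with Lemma~\ref{ppe} via transitivity of $\fst$. The only cosmetic difference is that you locate $x^b$ by comparing $\mag(A')$ with $x^{-1}$ using the log-free trichotomy, whereas the paper compares the monomials themselves (via \cite[\Elogfreepower]{edgar}, getting $\log x\fst x^c\fst\fb$) and passes tacitly to the derivative comparison needed to invoke $\DD$ --- a step your version makes explicit.
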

\begin{proof}
Let $\AA$ satisfy $\DD$, where
$\G_0 \subseteq \AA \subseteq \G_\bullet$.
Let $\fa, \fb \in \AA \cup \{\log x\}$ with
$\fa' \fst \fb'$ and let $S_1, S_2 \in \P$ with
$S_1 < S_2$.  Since $\AA$ already satisfies
$\DD$, we are left only with the two cases $\fa=\log x$
and $\fb = \log x$.  Suppose $\fa = \log x$, so that
$\fb \fgt \log x \fgt 1$.  Since $\fb$ is log-free,
by \cite[\Elogfreepower]{edgar}
there is a real constant $c>0$ with $x^c \fst \fb$.
But $x^c \in \AA$, so
$x^c \circ S_2 - x^c \circ S_1 \fst
\fb \circ S_2 - \fb \circ S_1$.
By Lemma~\ref{ppe} we have
$\log \circ S_2 - \log \circ S_1 \fst
x^c \circ S_2 - x^c \circ S_1$.
Combining these, we get
$\log \circ S_2 - \log \circ S_1 \fst
\fb \circ S_2 - \fb \circ S_1$.

Consider the other case, $\fb = \log x$.
If $\fa = 1$, the conclusion is clear.  If
$\fa \fst \log x$ is log-free and not $1$,
then there is a real constant $c<0$ with
$\fa \fst x^c$.  Then, as in the previous case, we have
$x^c \circ S_2 - x^c \circ S_1 \fgt
\fa \circ S_2 - \fa \circ S_1$ and
$\log \circ S_2 - \log \circ S_1 \fgt
x^c \circ S_2 - x^c \circ S_1$, so
$\log \circ S_2 - \log \circ S_1 \fgt
\fa \circ S_2 - \fa \circ S_1$.
\end{proof}

\begin{lem}\label{ppf}
Suppose $\G_0 \subseteq \AA \subseteq \G_\bullet$.
If $\AA$ satisfies $\CC$ and $\DD$, then
$$
	\widetilde{\AA} :=
	\SET{x^be^L}{b \in \R, L \in \T_\AA\text{ purely large}}
$$
satisfies $\CC$.
\end{lem}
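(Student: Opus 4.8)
The plan is to reduce property $\CC$ for $\widetilde{\AA}$ to property $\CC$ for a Hahn‑series space, via the single identity $x^be^L=e^{b\log x+L}$ together with monotonicity of $\exp$. First I would enlarge the monomial set: by Lemma~\ref{ppl} the set $\AA^+:=\AA\cup\{\log x\}$ still satisfies $\CC$, and by Lemma~\ref{pph} (which applies because $\G_0\subseteq\AA\subseteq\G_\bullet$) it still satisfies $\DD$; hence by Lemma~\ref{ppc} the space $\T_{\AA^+}$ satisfies $\CC$. The reason for adjoining $\log x$ is that for any $\g=x^be^L\in\widetilde{\AA}$ the transseries $M:=b\log x+L$ is purely large with $\supp M\subseteq\{\log x\}\cup\supp L\subseteq\AA^+$, so $M\in\T_{\AA^+}$, while $\g=e^M$.

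Then, using Remark~\ref{ppa}, I would verify $\CC$ for a single such $\g=x^be^L\in\widetilde{\AA}$. Fix $S_1<S_2$ in $\P$. Since $\g=e^M$ we have $\g'=M'e^M$ with $e^M>0$, so $\g'$ and $M'$ have the same sign, and $\g\circ S_i=e^{M\circ S_i}$. Applying $\CC$ for $\T_{\AA^+}$ to $M\in\T_{\AA^+}$ gives the corresponding order relation between $M\circ S_1$ and $M\circ S_2$; the case $M'=0$ is trivial, since a purely large constant is $0$, forcing $\g=1$. It remains to transport this relation through $\exp$. For this I would record that $\exp$ is strictly increasing on all of $\T$: by Proposition~\ref{explog2}(b) applied to $t$ and to $-t$ one gets $t>0\Rightarrow e^t>1$ and $t<0\Rightarrow e^t=1/e^{-t}<1$, so $e^t-1$ has the same sign as $t$; then factoring $\g\circ S_2-\g\circ S_1=e^{M\circ S_1}\big(e^{M\circ S_2-M\circ S_1}-1\big)$ with positive first factor shows $\g\circ S_2-\g\circ S_1$ has the sign of $M\circ S_2-M\circ S_1$, hence of $M'$, hence of $\g'$ --- which is exactly $\CC$ for $\{\g\}$.

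I expect the only points needing care to be the two bookkeeping items just flagged: ensuring $\log x$ is present in the monomial set (so that $b\log x+L$ lands in a space already known to satisfy $\CC$), and supplying monotonicity of $\exp$ on all of $\T$, since the readily available Proposition~\ref{ineq}(d) is stated only for arguments in $\LP$ whereas $M\circ S_i$ need not be large positive. Neither is a genuine obstacle given the lemmas already established; the substantive content is just $x^be^L=e^{b\log x+L}$ and the chain rule $(e^M)'=M'e^M$. A term‑by‑term argument in the style of Lemmas~\ref{ppb}--\ref{ppc} (comparing $b\log x$ against $L$ via $\DD$ to pin down the sign of $bx^{-1}+L'$) would also work, but seems strictly more laborious and is unnecessary here.
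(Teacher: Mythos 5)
Your proof is correct and follows essentially the same route as the paper: adjoin $\log x$ via Lemmas~\ref{ppl} and~\ref{pph}, get $\CC$ for $\T_{\AA\cup\{\log\}}$ from Lemma~\ref{ppc}, write $x^be^L=e^{b\log x+L}$, pass the inequality for the purely large exponent through $\exp$. Your extra care in justifying monotonicity of $\exp$ on arguments that need not be large positive (via Proposition~\ref{explog2}(b) rather than Proposition~\ref{ineq}(d)) is a small but welcome tightening of the step the paper dispatches with ``Exponentiate.''
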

\begin{proof}
First $\AA \cup \{\log\}$ satisfies $\CC$ by Lemma~\ref{ppl}
and $\DD$ by Lemma~\ref{pph}.  Then
$\T_{\AA\cup\{\log\}}$ satisfies $\CC$ by Lemma~\ref{ppc}.

Let $\g \in \widetilde{\AA}$, so $\g = e^L$ with
$L \in \T_{\AA\cup\{\log\}}$ purely large
and let $S_1, S_2 \in \P$ with $S_1 < S_2$.
Then $\g' = L' e^L$ so $\g'$ has the same sign as $L'$.
Take the case $\g'>0$.
Since $L \in \T_{\AA\cup\{\log\}}$ which satisfies $\CC$, we have
$L \circ S_1 < L \circ S_2$.  Exponentiate to get
$\g \circ S_1 < \g \circ S_2$, as required.

The case $\g'<0$ is done in the same way.
\end{proof}

\begin{lem}\label{ppp}
Assume $\T_{\G_N \cup \{\log\}}$ satisfies $\CC$ and $\DD$.
Let $B, L \in \T_{\G_N \cup \{\log\}}$,
with $L$ purely large, and $\fa = e^L \in \G_{N+1}$.
Assume $\fa \fst 1 \fst B$.  Let $S_1, S_2 \in \P$
with $S_1 < S_2$.  Then
$$
	B(S_2) - B(S_1) \fgt \fa(S_1) - \fa(S_2) .
$$
\end{lem}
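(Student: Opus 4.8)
The plan is to bound $\fa(S_1)-\fa(S_2)$ from above by expanding the ordinary exponential, to bound $B(S_2)-B(S_1)$ from below, and to reconcile the two using the first Taylor theorem $[\A_1]$ (Proposition~\ref{tterm}, already available), the properties $\CC$ and $\DD$ for $\T_{\G_N\cup\{\log\}}$, and the pointwise inequality $\fa'\fst B'$.

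First I would fix signs. Replacing $B$ by $-B$ changes neither hypothesis nor conclusion, so assume $B>0$. Since $\fa=e^L\fst1$ and $L$ is purely large, $L<0$, so by Proposition~\ref{ineq1} we have $0<\fa(S_i)\fst1$ for $i=1,2$. A monomial $\m$ is positive, so $\m'$ has the sign of $\m^\dagger$, which is positive exactly when $\m\fgt1$; hence $\fa'<0$ (i.e.\ $L'<0$) and $B'>0$, and since $\T_{\G_N\cup\{\log\}}$ satisfies $\CC$ this gives $\Delta B:=B(S_2)-B(S_1)>0$, $\fa(S_1)-\fa(S_2)>0$, and $L(S_1)>L(S_2)$. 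Writing $t:=L(S_1)-L(S_2)>0$ and $\Delta L:=-t$, the elementary bounds $0<1-e^{-t}\le t$ and $1-e^{-t}<1$ (true over $\R$, hence over $\T$ by cases on the magnitude of $t$) give
\begin{equation*}
	0<\fa(S_1)-\fa(S_2)=e^{L(S_1)}\bigl(1-e^{-t}\bigr)\le\fa(S_1)\,|\Delta L|\qquad\text{and}\qquad\le\fa(S_1).
\end{equation*}

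Now the reduction. If $L'\fst B'$ or $L'\fe B'$, then $\DD$ for $\T_{\G_N\cup\{\log\}}$ (which contains $L$ and $B$), with its routine consequences, gives $|\Delta L|\fsteq\Delta B$, so $\fa(S_1)-\fa(S_2)\le\fa(S_1)|\Delta L|\fsteq\fa(S_1)\Delta B\fst\Delta B$, using $\fa(S_1)\fst1$; this settles the lemma. So assume $L'\fgt B'$. If moreover $\Delta B\fgteq1$ (for instance if $S_2-S_1\fgteq1$), then $\fa(S_1)-\fa(S_2)\le\fa(S_1)\fst1\fsteq\Delta B$ and we are done; so assume also $\Delta B\fst1$. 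Here I would argue that $S_2-S_1$ is small enough for Proposition~\ref{tterm} to apply to both $L$ and $B$ (each of height $\le N$): were it not — say $S_2-S_1\not\fst\G_{N-1,M}\circ S_1$ for the relevant depth $M$ (read as $S_2-S_1\not\fst S_1$ when $N=0$) — then, $B$ being increasing with a large monomial for dominant part, $B(S_2)-B(S_1)$ would exceed a positive constant, against $\Delta B\fst1$. Thus $\Delta L\sim L'(S_1)(S_2-S_1)$ and $\Delta B\sim B'(S_1)(S_2-S_1)$, whence, using $\fa'=L'\fa$,
\begin{equation*}
	\fa(S_1)-\fa(S_2)\le\fa(S_1)\,|\Delta L|\fe\fa(S_1)\,|L'(S_1)|\,(S_2-S_1)=|\fa'(S_1)|\,(S_2-S_1),
\end{equation*}
while $\Delta B\fe B'(S_1)(S_2-S_1)$; since $\fa'\fst B'$ yields $\fa'(S_1)\fst B'(S_1)$ by Proposition~\ref{ineq1}, we conclude $\fa(S_1)-\fa(S_2)\fst\Delta B$.

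It remains to justify the pointwise inequality $\fa'\fst B'$. As $\fa'=L'\fa$, its dominant monomial is $\mag(L')\cdot\fa$; since $\fa\fst1$ while $B\fgt1$, this monomial must be far smaller than $\mag(B')$ — by ``height wins'' \cite[\Eheightwins]{edgar} when $\fa$ has greater height than every monomial of $B'$, and otherwise because $\fa$, and hence $\mag(L')\fa$ (note $\mag(L')\fst e^{-L}$), is exponentially small in a purely large quantity whereas $B\fgt1$ forces $\mag(B')$ to be at least the reciprocal of a finite product of $x$ and iterated logarithms. The two points I expect to require real care are this pointwise estimate, which should be isolated as a short lemma comparing derivatives of small and of large transseries, and — in the case $L'\fgt B'$ — the bookkeeping showing that an increment $S_2-S_1$ with $\Delta B\fst1$ is automatically small enough for Proposition~\ref{tterm}; everything else is routine.
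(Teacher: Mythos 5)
There is a genuine gap at the point where you dispose of the large-increment case. You claim that if $S_2-S_1\not\fst\G_{N-1,M}\circ S_1$ then $B(S_2)-B(S_1)$ would exceed a positive constant, so that $\Delta B\fst1$ forces Proposition~\ref{tterm} to apply to both $B$ and $L$. That implication is false: the hypothesis of Proposition~\ref{tterm} requires $S_2-S_1$ to be far smaller than \emph{every} monomial of $\G_{N-1,M}\circ S_1$, including the very small ones, and nothing about $\Delta B$ controls that. Concretely, take $N=1$, $B=\log x$, $L=-e^{x^2}$, $S_1=x$, $S_2=x+1$. Then $\Delta B=\log(1+1/x)\sim 1/x\fst1$ and $L'\fgt B'$, so you are in your final case; yet $S_2-S_1=1\not\fst x^{-1}\in\G_{0}\circ S_1$, and the Taylor \emph{conclusion} for $L$ genuinely fails: $L(S_2)-L(S_1)\sim-e^{x^2+2x+1}$ while $L'(S_1)(S_2-S_1)=-2xe^{x^2}$, so $|\Delta L|\fgt|L'(S_1)|\,(S_2-S_1)$. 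Since your chain uses $|\Delta L|\fsteq|L'(S_1)|(S_2-S_1)$ as an \emph{upper} bound on $|\Delta L|$, it breaks exactly here (the lemma's conclusion still holds in this example, but not by your estimates).

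This regime is the actual content of the lemma, and the paper handles it with a device you would need to supply. After first reducing to the case where $\mag L$ has exact height $N$ (otherwise $\fa\in\G_N$ and the claim is already an instance of $\DD$ --- a reduction you skip, and which also cleans up your justification of $\fa'\fst B'$), the paper treats $S_2-S_1\not\fst\G_N\circ S_1$ by introducing the auxiliary increment $V=(xe^L/B')\circ S_1$: by ``height wins'' \cite[\Eheightwins]{edgar} one has $xe^L/B'\fst\G_N$, so $V$ \emph{is} Taylor-small and $0<V<S_2-S_1$; the calibration gives $B'(S_1)\cdot V=S_1e^{L(S_1)}\fgt e^{L(S_1)}\ge\fa(S_1)-\fa(S_2)$, and monotonicity ($\CC$ for $B$) then yields $B(S_2)-B(S_1)>B(S_1+V)-B(S_1)\sim B'(S_1)\cdot V$. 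On the positive side, your small-increment argument is sound and even a little slicker than the paper's: using $1-e^{-t}\le t$ (Proposition~\ref{explog2}) avoids the Maclaurin expansion of $e^{-U}$ and hence the verification that $U\fst1$; and the preliminary reduction via $\DD$ when $L'\fsteq B'$ is correct. But without a substitute for the auxiliary-increment step, the proof is incomplete.
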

\begin{proof}
If $L \in \T_{\G_{N-1}\cup\{\log\}}$, then $\fa \in \G_N$,
and this is known by $\DD$.  So assume
$L \not\in \T_{\G_{N-1}\cup\{\log\}}$.
So $\mag L \in \G_N \setminus \G_{N-1}$ has exact height $N$.
Since both hypothesis and conclusion are unchanged when
$B$ is replaced by $-B$, we may assume $B>0$.
Then, since $B$ is large and positive, we also have $B' > 0$.

There are two cases, depending on the size of $S_2-S_1$.

\textit{Case 1.}  $S_2-S_1 \not\fst \G_{N}\circ S_1$.
Let $V = (xe^L/B')\circ S_1$.  Then $V>0$ and
since $B' \in \T_N$ is log-free, and $\mag L$ has
exact height $N$, by \cite[\Eheightwins]{edgar} we have
$x e^L/B' \fst \G_N$, so
$V \fst \G_{N}\circ S_1$.  So $0 < V < S_2-S_1$,
$S_1 < S_1+V < S_2$.  Also
$B'(S_1)\cdot V = S_1 e^{L(S_1)} \fgt e^{L(S_1)}$.
By $\CC$ for $B$, we have $B(S_1+V) < B(S_2)$
and thus
\begin{align*}
	B(S_2)-B(S_1) &> B(S_1+V)-B(S_1)
	\sim B'(S_1)\cdot V
	= S_1 e^{L(S_1)}
	\\ &\fgt
	e^{L(S_1)} > e^{L(S_1)}-e^{L(S_2)} > 0.
\end{align*}
So
$$
	B(S_2) - B(S_1) \fgt e^{L(S_1)}-e^{L(S_2)}
	= |\fa(S_2) - \fa(S_1)| .
$$

\textit{Case 2.}  $S_2-S_1 \fst \G_{N}\circ S_1$.
Now $S_2 - S_1 \fst \G_{N-1}\circ S_1$, so by
Proposition~\ref{tterm} we have
\begin{align*}
	B(S_2)-B(S_1) &\sim B'(S_1)\cdot(S_2-S_1),\\
	L(S_2)-L(S_1) &\sim L'(S_1)\cdot(S_2-S_1) .
\end{align*}
But $L \in \T_{\G_N\cup\{\log\}}$, so
$L' \in \T_N$, so $\mag(1/L') \in \G_N$, and thus
$S_2-S_1 \fst 1/L'(S_1)$ so
$$
	U := L(S_1)-L(S_2) \sim L'(S_1)\cdot(S_1-S_2) \fst 1 .
$$
Expand using the Maclaurin series for $e^z$:
\begin{align*}
	\fa(S_1)-\fa(S_2)
	&= e^{L(S_1)}(1-e^{-U})
	= e^{L(S_1)}(U+\o(U))
	\\ &\sim
	-e^{L(S_1)}L'(S_1)\cdot (S_2-S_1)
	= -\fa'(S_1)\cdot (S_2-S_1)
	\\ &\fst B'(S_1)\cdot (S_2-S_1)
	\sim B(S_2)-B(S_1).
\end{align*}
This completes the proof.
\end{proof}

\begin{lem}\label{ppg}
Let $N \in \N$.
Suppose $\G_N$ satisfies $\CC$ and $\DD$.  Then
$\G_{N+1}$ satisfies $\DD$.
\end{lem}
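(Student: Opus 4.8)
The plan is to first cash in the preceding lemmas. Under the hypothesis that $\G_N$ satisfies $\CC$ and $\DD$, Lemma~\ref{ppl} gives that $\G_N\cup\{\log\}$ satisfies $\CC$, and Lemma~\ref{pph} (applicable since $\G_0\subseteq\G_N\subseteq\G_\bullet$) gives that $\G_N\cup\{\log\}$ satisfies $\DD$; Lemmas~\ref{ppc} and~\ref{ppb} then upgrade both to $\T_{\G_N\cup\{\log\}}$. Moreover Lemma~\ref{ppf} with $\AA=\G_N$ shows that $\SET{x^be^L}{b\in\R,L\in\T_{\G_N}\text{ purely large}}$ satisfies $\CC$; since this set contains $\G_{N+1}\setminus\{1\}$, Remark~\ref{ppa} gives that $\G_{N+1}$ itself satisfies $\CC$ --- that is, every monomial of $\G_{N+1}$ is monotone in the right composand. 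In particular the hypotheses of Lemma~\ref{ppp} are now met. So throughout I may use $\CC$ and $\DD$ for $\T_{\G_N\cup\{\log\}}$, and $\CC$ for $\G_{N+1}$.

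Now let $A,B\in\G_{N+1}$ with $A'\fst B'$ and let $S_1,S_2\in\P$ with $S_1<S_2$; write $A=e^{L_A}$, $B=e^{L_B}$ with $L_A,L_B\in\T_{\G_N\cup\{\log\}}$ purely large. The case $B=1$ is vacuous, since then $B'=0$, so $A'\fst 0$ forces $A'=0$, and the hypothesis becomes $0\fst 0$, which is false. If $A=1$ then $A\circ S_2-A\circ S_1=0$, while $B\ne 1$ forces $L_B\ne 0$, hence $L_B'\ne 0$, hence $L_B\circ S_1\ne L_B\circ S_2$ by $\CC$ for $\T_{\G_N\cup\{\log\}}$ and then $B\circ S_1\ne B\circ S_2$ by Proposition~\ref{ineq}(d); so $0=A\circ S_2-A\circ S_1\fst B\circ S_2-B\circ S_1$ as wanted. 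So assume $A,B\ne 1$, and fix $M$ with $A,B\in\G_{N+1,M}$.

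The argument splits on the size of $S_2-S_1$. If $S_2-S_1\fst\G_{N,M}\circ S_1$, then Proposition~\ref{tterm} (the case $[\A_1]$) applies to $A$ and to $B$, both elements of $\T_{N+1,M}\setminus\R$, giving $A\circ S_2-A\circ S_1\sim A'(S_1)\cdot(S_2-S_1)$ and $B\circ S_2-B\circ S_1\sim B'(S_1)\cdot(S_2-S_1)$; from $A'\fst B'$, i.e.\ $A'/B'\fst 1$, Proposition~\ref{ineq1} gives $(A'/B')\circ S_1\fst 1$, hence $A'(S_1)\fst B'(S_1)$, and multiplying by the nonzero factor $S_2-S_1$ finishes this case.

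The remaining case $S_2-S_1\not\fst\G_{N,M}\circ S_1$ is where the real work lies, and I expect it to be the main obstacle. I would first record a structural fact: writing $\mag A'=\mag(L_A')\,e^{L_A}$ and $\mag B'=\mag(L_B')\,e^{L_B}$ with $\mag(L_A'),\mag(L_B')\in\G_N$, the inequality $A'\fst B'$ together with ``height wins'' (\cite[\Eheightwins]{edgar}) forbids $\mag A\fgt\mag B$; since $A\ne B$ (else $A'\fe B'$) this yields $A\fst B$, and hence $A\circ S_i\fst B\circ S_i$ by Proposition~\ref{ineq1}. Writing $A\circ S_2-A\circ S_1=(A\circ S_1)(e^{P}-1)$ and $B\circ S_2-B\circ S_1=(B\circ S_1)(e^{Q}-1)$, where $P=L_A\circ S_2-L_A\circ S_1$ and $Q=L_B\circ S_2-L_B\circ S_1$ are increments of elements of $\T_{\G_N\cup\{\log\}}$, I would then split on whether $B$ is large or small. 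If $B$ is large (so $B\in\P$, with $L_B$ purely large positive, so $Q>0$): when $A$ is small, Lemma~\ref{ppp} applied with ``$\fa$'' $=A$ and ``$B$'' $=L_B$ gives $Q\fgt|A\circ S_2-A\circ S_1|$, and the elementary estimate $B\circ S_2-B\circ S_1\fgt Q$ (valid since $B\circ S_1\fgt 1$) finishes it; when $A$ is large, the height-wins fact also forces $B$ large, and then $L_A'\fsteq L_B'$, so $|P|\fsteq|Q|$ by $\DD$ (and its $\fe$-refinement) for $\T_{\G_N\cup\{\log\}}$, which together with $A\circ S_1\fst B\circ S_1$ and the monotonicity of $z\mapsto e^z-1$ finishes it. If $B$ is small (so $A$ is small too, by the height-wins fact), I would run the analogous estimates directly on $P,Q$ and on the small quantities $A\circ S_i,B\circ S_i$, using $\CC$-monotonicity of $B$ and, in the subcase where $A$'s exponent decays faster than $B$'s, the buffer construction in the proof of Lemma~\ref{ppp} once more. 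This last subcase, and the bookkeeping with the indices $N,M$ throughout the ``large $S_2-S_1$'' regime, are where I expect the difficulty to be concentrated.
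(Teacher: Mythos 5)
Your preliminary reductions are fine: under the hypothesis, Lemmas~\ref{ppl}, \ref{pph}, \ref{ppc}, \ref{ppb} do give $\CC$ and $\DD$ for $\T_{\G_N\cup\{\log\}}$, Lemma~\ref{ppf} gives $\CC$ for $\G_{N+1}$, and your treatment of the regime $S_2-S_1\fst\G_{N}\circ S_1$ via Proposition~\ref{tterm} is correct and complete. The problem is that the heart of the lemma is exactly the regime you postpone, and there the proposal has a genuine gap rather than a proof. In the two-large subcase, $|P|\fsteq|Q|$ together with $A(S_1)\fst B(S_1)$ does not by itself control the quotient $(e^P-1)/(e^Q-1)$: for instance if $P$ and $Q$ are both purely large positive with $P\sim 2Q$, that quotient is $\fgt 1$ and you must instead fall back on $A(S_2)\fst B(S_2)$; so ``monotonicity of $z\mapsto e^z-1$'' conceals a further case analysis on the signs and relative sizes of $P$ and $Q$. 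More seriously, in the two-small subcase you appeal to ``the buffer construction in the proof of Lemma~\ref{ppp} once more,'' but Lemma~\ref{ppp} compares a \emph{small height-$(N+1)$ monomial} against a \emph{large element of} $\T_{\G_N\cup\{\log\}}$; it says nothing about comparing the increments of two small height-$(N+1)$ monomials $A\fst B\fst 1$ against each other, and adapting the buffer argument to that situation (where the natural buffer $V=(xe^{L}/B')\circ S_1$ has no analogue because $B'$ itself has height $N+1$) is precisely the content that still has to be supplied. As written, the main case is a plan, not an argument.

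For comparison, the paper's proof avoids the entire case division on the signs and sizes of $A$, $B$, and $S_2-S_1$. Setting $\m=\fa/\fb\fst 1$, the identity
$$
	\fa(S_2)-\fa(S_1)=\big(\fb(S_2)-\fb(S_1)\big)\left(\m(S_2)+\fb(S_1)\,
	\frac{\m(S_2)-\m(S_1)}{\fb(S_2)-\fb(S_1)}\right)
$$
reduces the lemma to the single ratio estimate $\fb(S_1)\bigl(\m(S_2)-\m(S_1)\bigr)/\bigl(\fb(S_2)-\fb(S_1)\bigr)\fst 1$, which is obtained by applying Lemma~\ref{ppp} to the pair $\m\fst 1\fst\log\fb$ (note $\log\fb$ lies in $\T_{\G_N\cup\{\log\}}$, so the lemma applies no matter whether $\fb$ is large or small) together with the elementary inequality $\log t<t-1$. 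The dichotomy on the size of $S_2-S_1$ that you set up by hand is already packaged inside Lemma~\ref{ppp}. I would recommend reorganizing along these lines rather than trying to complete the case-bash.
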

\begin{proof}
Since $\G_N$ satisfies $\CC$ and $\DD$, we have:
$\G_N \cup \{\log\}$ satisfies $\CC$ by Lemma~\ref{ppl}
and $\DD$ by Lemma~\ref{pph}; and
$\T_{\G_N\cup\{\log\}}$ satisfies $\CC$ by Lemma~\ref{ppc}
and $\DD$ by Lemma~\ref{ppb}.

Let $\fa, \fb \in \G_{N+1}$ with $\fa' \fst \fb'$
and let $S_1, S_2 \in \P$ with $S_1 < S_2$.
Since $\fb=1$ is impossible
and $\fa=1$ is easy, assume they are not $1$; so
$\fa \fst \fb$.
Note $\log \fb \in \T_{\G_N\cup\{\log\}}$ is purely large
and nonzero, hence large.  

Let $\m = \fa/\fb$ so that $\m \fst 1$,
and thus $\m(S_1) \fst 1$, $\m(S_2) \fst 1$.

I claim that
\begin{equation*}
	\fb(S_1)\,\frac{\m(S_2)-\m(S_1)}{\fb(S_2)-\fb(S_1)} \fst 1 .
\tag{2}\label{eq:ratio}
\end{equation*}
We will prove this in cases.

\textit{Case 1: }$\fb(S_1) \fgt \fb(S_2)$.  Then
$\fb(S_1)-\fb(S_2) \sim \fb(S_1)$, so
$$
	\fb(S_1)\,\frac{\m(S_2)-\m(S_1)}{\fb(S_2)-\fb(S_1)}
	\sim \m(S_1) - \m(S_2) \fst 1,
$$
as claimed.

\textit{Case 2: }$\fb(S_1) \fsteq \fb(S_2)$.  If
$\fb(S_2) > \fb(S_1)$, then apply
Lemma~\ref{ppp} [to $\m \fst 1 \fst \log \fb$] to get
\begin{align*}
	\fb(S_1)\,\frac{\m(S_2)-\m(S_1)}{\fb(S_2)-\fb(S_1)}
	&\fst
	\fb(S_1)\,\frac{\log \fb(S_2)-\log \fb(S_1)}{\fb(S_2)-\fb(S_1)}
	\\ &=
	\fb(S_1)\,\frac{\log\big(\fb(S_2)/\fb(S_1)\big)}{\fb(S_2)-\fb(S_1)}
	\\ &<
	\fb(S_1)\,\frac{\big(\fb(S_2)/\fb(S_1)\big)-1}{\fb(S_2)-\fb(S_1)}
	= 1 .
\end{align*}
On the other hand, if
$\fb(S_2) < \fb(S_1)$, then again apply
Lemma~\ref{ppp} [to $\m \fst 1 \fst \log \fb$] to get
\begin{align*}
	\fb(S_1)\,\frac{\m(S_1)-\m(S_2)}{\fb(S_1)-\fb(S_2)}
	&\fst
	\fb(S_1)\,\frac{\log \fb(S_1)-\log \fb(S_2)}{\fb(S_1)-\fb(S_2)}
	\\ &=
	\fb(S_1)\,\frac{\log\big(\fb(S_1)/\fb(S_2)\big)}{\fb(S_1)-\fb(S_2)}
	\\ &<
	\fb(S_1)\,\frac{\big(\fb(S_1)/\fb(S_2)\big)-1}{\fb(S_1)-\fb(S_2)}
	= \frac{\fb(S_1)}{\fb(S_2)} \fsteq 1 .
\end{align*}
So in both cases, we have established (\ref{eq:ratio}).

Now compute
\begin{align*}
	\fa(S_2) - \fa(S_1)
	&= \fb(S_2)\m(S_2) - \fb(S_1)\m(S_1)
	\\ &=
	\big(\fb(S_2)-\fb(S_1)\big)\,\left(\m(S_2)
	+ \fb(S_1)\,\frac{\m(S_2)-\m(S_1)}{\fb(S_2)-\fb(S_1)}\right)
	\\ &\fst \fb(S_2)-\fb(S_1) .
\end{align*}
The final step uses (\ref{eq:ratio}) together with $\m(S_2) \fst 1$.
\end{proof}

\begin{pr}\label{ppi}
$\T_\bullet = \R\lbb\G_\bullet\rbb$ satisfies $\CC$ and $\DD$.
\end{pr}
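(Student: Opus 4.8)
The plan is to prove the statement by induction on the height $N$, showing first that each $\G_N$ satisfies both $\CC$ and $\DD$, and then lifting this to all of $\T_\bullet = \T_{\G_\bullet}$ by means of Lemmas~\ref{ppb} and~\ref{ppc}. Throughout I would use Remark~\ref{ppa} in the following form: any subset of a set that satisfies $\CC$ (respectively $\DD$) again satisfies $\CC$ (respectively $\DD$), since each of these properties is witnessed singleton-by-singleton (respectively pair-by-pair).

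For the base case, since $\G_0 \subseteq \G_0 \cup \{\log x\}$, Lemma~\ref{ppd} gives that $\G_0$ satisfies $\CC$ and Lemma~\ref{ppe} gives that $\G_0$ satisfies $\DD$. For the inductive step, assume $\G_N$ satisfies $\CC$ and $\DD$. Then $\G_{N+1}$ satisfies $\DD$ immediately by Lemma~\ref{ppg}. For $\CC$, I would apply Lemma~\ref{ppf} with $\AA = \G_N$ (legitimate because $\G_0 \subseteq \G_N \subseteq \G_\bullet$), obtaining that $\widetilde{\AA} = \SET{x^b e^L}{b \in \R, L \in \T_{\G_N}\text{ purely large}}$ satisfies $\CC$; since by the construction of $\G_{N+1}$ in \cite{edgar} every monomial of $\G_{N+1}$ has the form $x^b e^L$ with $L \in \T_{\G_N}$ purely large, we get $\G_{N+1} \subseteq \widetilde{\AA}$, and hence $\G_{N+1}$ satisfies $\CC$ by the subset remark. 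This completes the induction, so $\G_N$ satisfies $\CC$ and $\DD$ for every $N$.

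Next I would pass to $\G_\bullet = \bigcup_N \G_N$: if $T \in \G_\bullet$, then $T \in \G_N$ for some $N$, so the singleton $\{T\}$ satisfies $\CC$, whence $\G_\bullet$ satisfies $\CC$; and if $A,B \in \G_\bullet$, both lie in $\G_N$ with $N$ the larger of their heights, so $\{A,B\}$ satisfies $\DD$, whence $\G_\bullet$ satisfies $\DD$. Finally, since $\T_\bullet = \T_{\G_\bullet}$, Lemma~\ref{ppb} upgrades $\DD$ from $\G_\bullet$ to $\T_{\G_\bullet}$, and Lemma~\ref{ppc} (using that $\G_\bullet$ satisfies both $\CC$ and $\DD$) upgrades $\CC$ to $\T_{\G_\bullet}$, which is the desired conclusion.

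I do not expect a genuine obstacle at this point: all of the analytic content has already been absorbed into the preceding lemmas, and what remains is bookkeeping. The one place I would be careful is the inclusion $\G_{N+1} \subseteq \widetilde{\AA}$ used to invoke Lemma~\ref{ppf}, which must be matched against the exact definition of $\G_{N+1}$ in \cite{edgar}. It is also worth a quick check that the induction is not circular: Lemma~\ref{ppg}, whose internal argument routes through Lemma~\ref{ppp} and the Taylor case Proposition~\ref{tterm}, consumes only the hypothesis that $\G_N$ (not $\G_{N+1}$) satisfies $\CC$ and $\DD$, so the dependency graph is well-founded.
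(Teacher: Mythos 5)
Your proof is correct and follows exactly the paper's own route: base case from Lemmas~\ref{ppd} and~\ref{ppe}, induction on $N$ via Lemmas~\ref{ppf} and~\ref{ppg}, passage to the union $\G_\bullet$ by Remark~\ref{ppa}, and the final lift to $\T_\bullet$ by Lemmas~\ref{ppb} and~\ref{ppc}. The extra care you take with the inclusion $\G_{N+1}\subseteq\widetilde{\AA}$ and with non-circularity of the induction is sound but matches what the paper leaves implicit.
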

\begin{proof}

By Lemmas~\ref{ppd} and~\ref{ppe} $\G_0$ satisfies $\CC$ and $\DD$.
Applying Lemmas~\ref{ppf} and~\ref{ppg} inductively,
we conclude that $\G_N$ satisfies $\CC$ and $\DD$ for
all $N \in \N$.  And therefore $\G_\bullet = \bigcup_N \G_N$
satisfies $\CC$ and $\DD$ by Remark~\ref{ppa}.  Finally $\T_\bullet$
satisfies $\CC$ and $\DD$ by Lemmas \ref{ppb} and~\ref{ppc}.
\end{proof}

\begin{pr}
Let $\RR \subseteq \T$ and define
$\widetilde{\RR} := \SET{T\circ\log}{T \in \RR}$.
If $\RR$ satisfies $\CC$, then
$\widetilde{\RR}$ satisfies $\CC$.
If $\RR$ satisfies $\DD$, then
$\widetilde{\RR}$ satisfies $\DD$.
\end{pr}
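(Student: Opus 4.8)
The plan is to reduce both statements to the hypotheses on $\RR$ via the associativity identity $(T\circ\log)\circ S = T\circ(\log S)$, exploiting that composition on the right with $\log x$ maps $\P$ order-preservingly into $\P$ and that composition on the right with $\exp = e^x \in \P$ inverts composition with $\log x$.

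First I would record the elementary ingredients. Since $\P$ is closed under composition (see~\ref{no:LPSS}) and $\log x \in \P$, we have $\log S = (\log x)\circ S \in \P$ for every $S \in \P$; and if $S_1 < S_2$ in $\P$ then $\log S_1 < \log S_2$ by Proposition~\ref{ineq}(c). From Proposition~\ref{ineq1}, composition on the right with an element of $\P$ preserves signs and preserves $\fst$; in particular $\mathrm{sign}(T'\circ\log) = \mathrm{sign}(T')$, and $(V\circ\log)\circ\exp = V$ for all $V \in \T$.

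For the $\CC$ claim, let $U = T\circ\log$ with $T \in \RR$ and let $S_1 < S_2$ in $\P$. The chain rule gives $U' = (T'\circ\log)\,x^{-1}$, so $\mathrm{sign}(U') = \mathrm{sign}(T'\circ\log) = \mathrm{sign}(T')$. Putting $S_i^{*} = \log S_i \in \P$, so that $S_1^{*} < S_2^{*}$, we have $U\circ S_i = T\circ S_i^{*}$. Then the assumption that $\RR$ satisfies $\CC$, applied to $T$ with composands $S_1^{*} < S_2^{*}$, transfers verbatim: for instance $U' > 0$ forces $T' > 0$, hence $T\circ S_1^{*} < T\circ S_2^{*}$, i.e.\ $U\circ S_1 < U\circ S_2$; the cases $U' = 0$ and $U' < 0$ are identical. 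So $\widetilde{\RR}$ satisfies $\CC$.

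For the $\DD$ claim, let $A_1 = A\circ\log$ and $B_1 = B\circ\log$ with $A, B \in \RR$, let $S_1 < S_2$ in $\P$, and assume $A_1' \fst B_1'$. The chain rule gives $A_1' = (A'\circ\log)\,x^{-1}$ and $B_1' = (B'\circ\log)\,x^{-1}$; cancelling the positive factor $x^{-1}$ (Remark~\ref{relationship}) yields $A'\circ\log \fst B'\circ\log$, and composing on the right with $\exp$ (using $\log\circ\exp = x$) yields $A' \fst B'$. With $S_i^{*} = \log S_i \in \P$, $S_1^{*} < S_2^{*}$, the assumption that $\RR$ satisfies $\DD$, applied to $A$ and $B$, gives
$$
	A\circ S_2^{*} - A\circ S_1^{*} \fst B\circ S_2^{*} - B\circ S_1^{*} ,
$$
which is exactly $A_1\circ S_2 - A_1\circ S_1 \fst B_1\circ S_2 - B_1\circ S_1$. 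I do not expect a genuine obstacle in this argument; the only two steps worth a comment are that $\log$ does not leave $\P$ (closure of $\P$ under composition) and the cancellation of the factor $x^{-1}$, both of which are routine.
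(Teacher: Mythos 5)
Your proposal is correct and follows essentially the same route as the paper: use $Q' = (T'\circ\log)/x$ to match signs (resp.\ to transfer $\fst$ between derivatives), note that $\log$ carries $S_1 < S_2$ in $\P$ to $\log S_1 < \log S_2$ in $\P$, and then apply the hypothesis on $\RR$ at the composands $\log S_i$. The paper disposes of the $\DD$ case with ``done in the same way''; you spell out the one step it leaves implicit (recovering $A' \fst B'$ from $A_1' \fst B_1'$ by cancelling $x^{-1}$ and composing with $\exp$), which is a worthwhile addition but not a different argument.
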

\begin{proof}
Assume $\RR$ satisfies $\CC$.  Let $Q \in \widetilde{\RR}$, so that
$Q = T \circ \log$ with $T \in \RR$.  Note
$Q' = (T'\circ \log)/x$, so that $T'$ and $Q'$ have the same sign.
Let $S_1, S_2 \in \P$ with $S_1 < S_2$.  Then
$\log(S_1), \log(S_2) \in \P$ with $\log(S_1) < \log(S_2)$.
Now if $T'>0$, then applying property $\CC$ of $\RR$ to
$\log(S_1)$ and $\log(S_2)$, we get $T(\log(S_1))<T(\log(S_2))$.
That is: $Q(S_1) < Q(S_2)$.  The case $T'=0$ and $T'<0$
are similar.

The proof for $\DD$ is done in the same way.
\end{proof}

\begin{thm}\label{posderivthm}
The whole transline $\T$ satisfies $\CC$ and $\DD$.
\end{thm}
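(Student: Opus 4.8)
The plan is to build $\T = \T_{\bullet,\bullet}$ up from $\T_\bullet$ by composing on the right with logarithms, following the recursive description of the transline. Proposition~\ref{ppi} already gives that $\T_\bullet = \R\lbb\G_\bullet\rbb$ satisfies $\CC$ and $\DD$, and the preceding (unlabelled) proposition tells us that the class of sets satisfying $\CC$ (respectively $\DD$) is closed under the operation $\RR \mapsto \widetilde{\RR} = \SET{T\circ\log}{T \in \RR}$. So the first step is to observe that, since $\log_{M+1} = \log_M\circ\log$, we have
$$
	\T_{\bullet,M+1} = \SET{Q\circ\log}{Q \in \T_{\bullet,M}} = \widetilde{\T_{\bullet,M}} .
$$
Hence by induction on $M$, starting from the base case $\T_{\bullet,0} = \T_\bullet$ supplied by Proposition~\ref{ppi}, every $\T_{\bullet,M}$ satisfies $\CC$ and $\DD$.

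The second step is to pass to the union $\T = \bigcup_{M} \T_{\bullet,M}$. Here the key point, already isolated in Remark~\ref{ppa}, is that satisfying $\CC$ is a condition on single elements $\{T\}$ and satisfying $\DD$ is a condition on pairs $\{A,B\}$. Given any $T \in \T$ there is an $M$ with $T \in \T_{\bullet,M}$, so $\{T\}$ satisfies $\CC$; given any $A,B \in \T$ there is a single $M$ with $A,B \in \T_{\bullet,M}$ (take the larger of the two depths), so $\{A,B\}$ satisfies $\DD$. By Remark~\ref{ppa}, $\T$ therefore satisfies both $\CC$ and $\DD$, which is precisely the conjunction of Propositions~\ref{posderiv} and~\ref{derivcompare}.

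There is essentially no obstacle at this final stage: all of the genuine difficulty has been absorbed into Proposition~\ref{ppi}, and before it into Lemmas~\ref{ppd}--\ref{ppg}, whose inductive interplay between $\CC$ and $\DD$ across heights (in particular Lemma~\ref{ppp}, comparing a large $B$ against a small exponential monomial $\fa$) is the real content. The only things to be careful about in assembling the theorem are the bookkeeping of depths when forming the union and the (routine) observation that $\widetilde{\T_{\bullet,M}}$ is \emph{exactly} $\T_{\bullet,M+1}$ rather than merely contained in it.
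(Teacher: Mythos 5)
Your proposal is correct and is exactly the assembly the paper intends: Proposition~\ref{ppi} handles $\T_\bullet$, the unlabelled proposition propagates $\CC$ and $\DD$ through right-composition with $\log$ to each $\T_{\bullet,M}$, and Remark~\ref{ppa} (the fact that $\CC$ and $\DD$ are conditions on singletons and pairs, together with the nesting of the $\T_{\bullet,M}$) passes to the union $\T = \T_{\bullet,\bullet}$. Nothing is missing.
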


\section{Further Transseries}\label{furthersection}
Suppose we allow well-based transseries, but do not end in $\omega$
steps.  Begin as in Definition~\ref{beyond}.  Write
$\WW_{\omega} = \WW_{\bullet,\bullet}$, where $\omega$ is
the first infinite ordinal.  Then proceed
by transfinite recursion:
If $\alpha$ is an ordinal and $\G_\alpha$ has been defined, let
$\T_\alpha = \R[[\G_\alpha]]$ and 
$\WW_{\alpha+1} = \SET{e^L}{L \in \T_{\alpha} \text{ is purely large}}$.
If $\lambda$ is a limit ordinal and $\WW_{\alpha}$
have been defined for all $\alpha < \lambda$, let
$$
	\WW_{\lambda} = \bigcup_{\alpha<\lambda} \WW_{\alpha} .
$$
See \cite[\S2.3.4]{schmeling}.  Does it exist elsewhere, as well?

Call the elements of $\WW_\alpha$ \Def{Schmeling transmonomials}
and the elements of $\T_\alpha$ \Def{Schmeling transseries}.
This will allow such transseries as
$$
	H := \log x + \log\log x + \log\log\log x + \cdots
$$
and such monomials as
$$
	G := e^{-H} = \frac{1}{x\log x \log\log x \log\log\log x \cdots}\,.
$$
(In the notation of \cite[\S2.3]{schmeling},
$H \in \mathbb{L}$ and $G \in \mathbb{L}_\mathrm{exp}$.)
This $G$ is interesting (as those who have thought about convergence
and divergence of series will know) because: for actual
transseries $T$, we have $\int T \fgt 1$ if and only if $T \fgt G$.
That is, for $S \in \T$ we have: if $S \fgt 1$ then $S' \fgt G$;
if $S \fst 1$ then $S' \fst G$.

So what happens if we attempt to investigate $\int G$ if possible?
It seems that there is no Schmeling transseries $S$ with $S'=G$.

\subsection*{Iterated Log of Iterated Exp}
A Usenet sci.math discussion in July, 2009, suggested
investigation of growth rate of a function $Y$ with
$Y = \log(Y(e^{ax}))$ for a fixed constant $a$ (there it
was $\log 3$).  This $Y$ should be a limit of the sequence:
\begin{align*}
	Y_0 &= x ,
	\\
	Y_1 &= \log(e^{ax}) ,
	\\
	Y_2 &= \log\left(\log\left(e^{ae^{ax}}\right)\right) ,
	\\
	Y_3 &= \log\left(\log\left(\log\left(e^{\displaystyle
	ae^{ae^{ax}}}\right)\right)\right) ,
\end{align*}
and so on.
Iteration of transseries suggests a solution $Y$ not of finite height.
It seems $Y$ should begin
\begin{align*}
Y &= a x + \log(a) + \frac{\log(a)}{a}\, e^{-a x}
- \frac{1}{2}\,\frac{\log(a)^2}{a^2}\, e^{-2 a x} 
+ \frac{1}{3}\,\frac{\log(a)^3}{a^3}\, e^{-3 a x}
\\ &\qquad
- \frac{1}{4}\,\frac{\log(a)^4}{a^4}\, e^{-4 a x}
+ \frac{1}{5}\,\frac{\log(a)^5}{a^5}\, e^{-5 a x}
- \frac{1}{6}\,\frac{\log(a)^6}{a^6}\, e^{-6 a x}
+ \cdots
\end{align*}
and so on; order-type $\omega$.
Writing $\mu_1$ for $e^{-a x}$, these terms have
coefficient times powers of $\mu_1$.
Beyond all of those, we have terms involving
$\mu_2 = \exp(-a \exp(a x))$, beginning
\begin{align*}
\mu_2 \;& \big(
\log(a)\mu_1
-\log(a)^2\mu_1^2
+ \log(a)^3\mu_1^3
- \log(a)^4\mu_1^4 
+ \log(a)^5\mu_1^5 
+ \cdots\big)
\\
+ \mu_2^2 \;& \Big(
- \frac{\log(a)^2}{2}\mu_1
+ \frac{\log(a)^3-\log(a)^2}{2}\mu_1^2
+ \frac{2\log(a)^3-\log(a)^4}{2}\mu_1^3 
\\
&\qquad + \frac{\log(a)^5-3\log(a)^4}{2}\mu_1^4 
+ \frac{4\log(a)^5-\log(a)^6}{2}\mu_1^5
+\cdots\Big) +\cdots
\end{align*}
Order-type $\omega^2$.
Beyond all those we have terms involving
$\mu_3 = \exp(-a\exp(a\exp(a x)))$; order-type $\omega^3$.
And so on with $\mu_k$ of height $k$ for $k \in \N$.

\subsection*{Surreal Numbers}
If this extension for well-based transseries is continued through
all the ordinals, the result is a large (proper class)
real-closed ordered field.
With additional operations.  J.~H. Conway's system
of \Def{surreal numbers} \cite{conway}
is also a large (proper class) real-closed
ordered field, with additional
operations.  Any ordered field (with a set of elements, not a
proper class) can be embedded in either of these.  We can build
recursively a correspondence between the well-based transseries
and the surreal numbers.  But involving many arbitrary choices.

\cite[p.~16]{hoeven}
Is there a \textit{canonical} correspondence, not only preserving
the ordered field structure, but also some of the additional
operations?  Or is there a canonical embedding of one
into the other?  Perhaps we need to take the recursive way in which
one of these systems is built up and find a natural
way to imitate it in the other system.

Reals should correspond to reals.  The transseries $x$ should
correspond to the surreal number $\omega$.  But there are
still many more details not determined just by these.

\end{document}